\renewcommand{\thesubfigure}{\alph{subfigure}}
\renewcommand{\@thesubfigure}{(\thesubfigure)\space}
\renewcommand{\p@subfigure}{\thefigure}
\newtheorem{theorem}{Theorem} \theoremstyle{definition}
\newtheorem{definition}{Definition}
\newtheorem{lemma}[theorem]{Lemma}
\newtheorem{proposition}[theorem]{Proposition}
\theoremstyle{remark}
\newtheorem{remark}{Remark}
\numberwithin{remark}{section}
\numberwithin{theorem}{section}
\numberwithin{equation}{section}
\numberwithin{definition}{section}
\newtheorem{assumption}{Assumption}
\numberwithin{assumption}{section}
\newcommand{\tri}{\mathcal{T}}
\newcommand{\triH}{\tri_H}
\newcommand{\trih}{\tri_h}
\newcommand{\grad}{\nabla}
\newcommand{\ccont}{\tfrac{\beta}{\alpha}}
\newcommand{\dx}{\operatorname*{d}\hspace{-0.3ex}x}
\newcommand{\ddiv}{\operatorname*{div}}
\newcommand{\diam}{\operatorname*{diam}}
\newcommand{\support}{\operatorname*{supp}}
\newcommand{\dimension}{\operatorname*{dim}}
\newcommand{\N}{\mathcal{N}}
\newcommand{\R}{\mathbb{R}}
\newcommand{\QI}{\mathcal{I}_H}
\newcommand{\Cint}{C_{\operatorname*{qip}}}
\newcommand{\Vf}{V^{\operatorname*{fs}}}
\newcommand{\Pf}{\mathcal{P}^{\operatorname*{fs}}}
\newcommand{\Vc}{V^{\operatorname*{cs}}}
\newcommand{\uc}{u^{\operatorname*{cs}}}
\newcommand{\G}{{\mathcal G}}
\newcommand{\E}{{\mathcal E}}
\title[Numerical Upscaling at High Contrast]{Robust Numerical Upscaling of Elliptic Multiscale Problems at High Contrast}
\author{Daniel Peterseim$^*$ and Robert Scheichl}
\address[D. Peterseim]{Institut f\"ur Numerische Simulation der Universit\"at Bonn, Wegelerstr. 6, 53115 Bonn, Germany}
\email{peterseim@ins.uni-bonn.de}
\address[R. Scheichl]{Department of Mathematical Sciences, University
  of Bath, Claverton Down, Bath BA2 7AY, UK}
\email{r.scheichl@bath.ac.uk}
\date{\today}
\keywords{finite element, multiscale, upscaling, computational
  homogenization, high contrast}
\subjclass[2000]{65N30, 65N25, 65N15}
\begin{document}

\begin{abstract}
We present a new approach to the numerical upscaling for elliptic problems with rough diffusion coefficient at high contrast. It is based on the localizable orthogonal decomposition of $H^1$ into the image and the kernel of some novel stable quasi-interpolation operators with local $L^2$--approximation properties, independent of the contrast. We identify a set of sufficient assumptions on these quasi-interpolation operators that guarantee in principle optimal convergence without pre-asymptotic effects for high-contrast coefficients. We then give an example of a suitable operator and establish the assumptions for a particular class of high-contrast coefficients. So far this is not possible without any pre-asymptotic effects, but the optimal convergence is independent of the contrast and the asymptotic range is largely improved over other discretisation schemes. The new framework is sufficiently flexible to allow also for other choices of quasi-interpolation operators and the potential for fully robust numerical upscaling at high contrast.
\end{abstract}

\maketitle

\footnotetext{Supported by the Sino-German Science Center on the occasion of the Chinese-German Workshop on Computational and Applied Mathematics in Augsburg 2015.}

\section{Introduction}
This paper presents and analyses a novel numerical upscaling technique for the approximate solution of a prototypical partial differential equation with arbitrary positive bounded coefficients. 
The focus is on coefficients $A$ that are strongly heterogeneous, i.e., $A$ may vary rapidly on several non-separated scales and, moreover, the physical contrast (the ratio between global upper and lower bounds of its spectrum) may be very large. 

The precise setting of the paper is as follows. Let $\Omega\subset\mathbb{R}^{d}$ be a bounded polyhedral domain and let $A\in L^\infty(\Omega,\R^{d\times d}_{\operatorname*{sym}})$ be a matrix-valued coefficient with uniform spectral bounds $0<\alpha\leq \beta<\infty$,
\begin{equation}\label{e:spectralbound}
\sigma(A(x))\subset [\alpha,\beta],
\end{equation}
for almost all $x\in \Omega$.
Given some forcing term $g\in L^2(\Omega)$, we want to approximate the unknown weak solution $u$ of the linear elliptic partial differential equation $-\ddiv(A\grad u)=g$ with homogeneous Dirichlet boundary condition. The function $u\in V:=H_{0}^{1}( \Omega)$ is uniquely characterized by the variational problem
\begin{equation}\label{e:model}
b(u,v):=\int_\Omega (A\nabla u)\cdot \nabla v\dx = \int_\Omega g v\dx=:G(v),\quad \text{for all} \ \ v\in V.
\end{equation}
The accuracy of standard Galerkin finite element approximations of the unknown function $u$ depends crucially on the regularity of the underlying data. On the one hand, the rate of convergence under mesh refinement depends on interior angles of the domain and differentiability properties of $A$. On the other hand, even if the data is sufficiently regular so that a certain rate of convergence is possible, it may be observed only if the width $h$ of the underlying mesh is sufficiently small. In this context, the notion ``sufficiently small'' depends on data oscillations and the contrast in a critical way. E.g., for a scalar coefficient $A$ that oscillates between $\alpha$ and $\beta$ at some frequency $\varepsilon^{-1}$ for some small parameter $\varepsilon$, the asymptotic rate of convergence is not observed unless $h\lesssim \varepsilon$. In addition, even $h\lesssim (\ccont)^{-1}\varepsilon$ is necessary to decrease the energy error below $100\%$, which is too restrictive in many interesting cases. We emphasize that this condition is sharp for practically relevant right-hand sides $g$. 

We are therefore dealing with pre-asymptotic effects for standard finite element methods and other related schemes such as finite volumes or finite differences. Due to the high variability of the coefficient functions, one requires extremely fine computational grids that are able to capture all the fine scale oscillations and discontinuities. Hence, the numerical treatment of such equations is expensive in the sense that standard approaches result in systems of equations of enormous size and, hence, in a tremendous computational demand that can not be handled in a lot of scenarios.

This paper presents a new approach for numerical upscaling based on localizable orthogonal decompositions (LOD) into a low-dimensional coarse space (where we are looking for our approximation) and a high-dimensional remainder space. Some selectable quasi-interpolation operator serves as the basis of the decompositions. The coarse space is spanned by computable basis functions with local support. The basic methodology was recently introduced in \cite{2011arXiv1110.0692M} and generalized in \cite{Elfverson.Georgoulis.Mlqvist.ea:2012,2012arXiv1211.3551H,2012arXiv1211.5954H,2013arXiv1312.5922P}. For moderate contrast and arbitrary oscillatory coefficients this methodology yields approximations that converge to the true solution at the optimal rate (with respect to the coarse mesh size) without any pre-asymptotic effects. The analysis avoids the strong assumptions usually made in the classical homogenization framework, such as periodicity or scale separation. 

The promising numerical results in \cite{2011arXiv1110.0692M,Elfverson.Georgoulis.Mlqvist.ea:2012,HM_arXiv2013} for high-contrast model coefficients are not yet  reflected by the theoretical results for localized bases in those references, because the physical contrast $\beta/\alpha$ appears to be a critical parameter. The dependence on $\beta/\alpha$ enters the error analysis via norm equivalences
\begin{equation}\label{e:norms}
\begin{aligned}
  \beta^{-1/2}\|A^{1/2}\nabla \cdot\|_{L^2}&\leq\|\nabla \cdot\|_{L^2}\leq \alpha^{-1/2}\|A^{1/2}\nabla \cdot\|_{L^2},\\ \beta^{-1/2}\|A^{1/2} \cdot\|_{L^2}&\leq\| \cdot\|_{L^2}\leq \alpha^{-1/2}\|A^{1/2} \cdot\|_{L^2}.
\end{aligned}
\end{equation}
These equivalences are heavily used to connect variational techniques such as Galerkin orthogonality with  approximation properties of standard quasi-interpolation operators in standard coefficient-independent Sobolev spaces. The idea of this paper is to circumvent the critical norm equivalences by using coefficient-dependent quasi-interpolation operators, e.g. in \cite{MR2861254}, which enjoy optimal approximation properties in $A$-weighted Sobolev spaces.

Our multiscale method is fully defined by the choice of the quasi-interpolation operator $\QI$. We state a sufficient set of conditions on $\QI$ that will yield approximations $\uc$ that converge linearly to $u$ in the energy norm with respect to the coarse mesh size $H$, without any pre-asymptotic effects and independent of the contrast. More precisely, we show that local pre-computations of the coarse basis functions on vertex patches of diameter $\approx H\log(H^{-1}\sqrt{\beta/\alpha})$ suffice to derive the following error bound
\begin{equation*}
 \|A^{1/2} \nabla(u-\uc) \|_{L^2(\Omega)} \leq C H.
\end{equation*}
Here, $C$ denotes a generic constant that is independent of the computational grid and depends only on the constants in the abstract assumptions that we have made on $\QI$. In particular, if $\QI$ can be chosen such that all the assumptions hold with constants that are independent of contrast and fine scale heterogeneity then the convergence is also independent of such pre-asymptotic effects. 

Employing (as an example) novel quasi-interpolation techniques related to those analysed in \cite{MR2861254} we are indeed able to satisfy the sufficient conditions with constants that are independent of the contrast. So far this is only possible under some conditions on the geometry of the coefficient relative to the coarse grid. Moreover, the constant $C$ is not independent of $H/\varepsilon$ and the method is thus not without pre-asymptotic effects, but it extends the asymptotic regime far beyond that of other methods independently of the contrast. Despite these limitations, this result is the first one beyond heuristics to show that numerical upscaling for certain classes of high-contrast problems is possible. It may pave the way towards a comprehensive understanding of general high-contrast coefficients. In fact, our numerical tests do not show any strong pre-asymptotic effects. We shall emphasize at this point that in the LOD framework the coarse basis functions depend on the particular choice of the quasi-interpolation operator. In that sense, the methods analysed in this paper differ from those presented in \cite{2011arXiv1110.0692M,Elfverson.Georgoulis.Mlqvist.ea:2012,HM_arXiv2013}. Our new theoretical results improve the dependence on $\ccont$ of the convergence rate and of the scaling of the supports of the underlying basis functions in those papers, as well as in the alternative approaches in the literature \cite{BabLip11,MR2721592,OwhadiZ11,MZA:9015370,KY15}. The new results apply also to a more general class of coefficients than the analysis in \cite{MR2684351} and \cite{Peterseim:2013} which is also independent of $\ccont$.
 
Our approach belongs to the large class of multiscale methods. These methods, typically, decouple the necessary fine scale computations into local parts to decrease the computational cost without suffering from a remarkable loss in accuracy. Prominent examples of multiscale methods are the Multiscale Finite Element Method (MsFEM) proposed by Hou and Wu \cite{MR1455261} and the Heterogeneous Multiscale Method (HMM) by E and Engquist \cite{MR1979846}. In contrast to our approach, MsFEM and HMM are typically not constructed for a direct approximation of the unknown solutions but for homogenized solutions and corresponding correctors instead. Thus, the reliable approximation of the exact solution is up to unknown modeling errors that punishes the lack of proper periodicity and scale separation. Our framework is related to another classical multiscale method, the Variational Multiscale Method (VMM) proposed by Hughes et al. \cite{MR1660141} (see also \cite{MR2300286,Peterseim:2015}). In contrast to MsFEM and HMM, the VMM aims at a direct approximation of the exact solution without suffering from a modeling error remainder arising from homogenization theory. For connections between the methodologies we refer to \cite{2012arXiv1211.5954H,Peterseim:2015}. An interesting extension of the MsFEM method to more general heterogeneous coefficients without assumptions like periodicity and scale separation is the Generalised MsFEM \cite{Efendiev2013116}.

The remaining part of the paper is structured as follows. Section~\ref{s:abstract} defines the abstract methodological framework. In particular, abstract axioms on the underlying quasi interpolation are formulated that guarantee contrast-independent performance of the corresponding method shown in Section~\ref{s:apriori}. In Section~\ref{s:qi} we then present particular examples of quasi interpolation operators that satisfy the previous axioms for certain classes of coefficients. Section~\ref{s:numexp} discusses the results and their limitations in the light of several numerical experiments.

\section{An abstract multiscale method}\label{s:abstract}
In this section, we propose an abstract multiscale method based on the framework of localizable othogonal decompositions. The framework is inspired by the Variational Multiscale Method of Hughes et al. \cite{MR1660141} but takes a very different point of view and follows the specific constructions proposed in \cite{2011arXiv1110.0692M,2012arXiv1211.3551H,2012arXiv1211.5954H}. For a re-interpretation within the stabilization framework of the original VMM see \cite{Peterseim:2015}.

The key ingredient is a continuous, surjective and uniformly stable quasi-interpolation operator from some fine scale finite element space to an initial coarse space 
$\tilde{V}^{\text{cs}}$ that has certain $L^2$-approximation properties uniformly with respect to the contrast. Following the approach in \cite{2011arXiv1110.0692M}, it is then possible to design a new coarse space that is provably robust even in the high contrast regime. The localization of the basis functions depends only mildly on the contrast.

\subsection{Standard finite element discretization}\label{ss:classical}
Let $\triH$ denote a regular triangulation of $\Omega$ into closed simplices and let $H:\overline\Omega\rightarrow\mathbb{R}_{>0}$ denote the $\triH$-piecewise constant mesh size function with $H\vert_T=H_T:=\diam(T)$ for all $T\in\triH$. Additionally, let $\trih$ be a regular triangulation of $\Omega$ that is supposed to be a refinement of $\triH$. We assume that $\trih$ is sufficiently small so that all fine scale features of the coefficient $A$ are captured. The mesh size $h$ denotes the maximum diameter of an element of $\trih$.
The corresponding classical (conforming) finite element spaces of continuous piecewise polynomials of degree $1$ are given by
\begin{align*}
V_H&:=\{v_H\in H^1_0(\Omega)\;\vert\;\forall T\in\triH:\enspace (v_H)\vert_T \text{ is affine}\},\\
V_h&:=\{v_h\in H^1_0(\Omega)\;\vert\;\forall K\in\mathcal{T}_h: \enspace (v_h)\vert_K \text{ is affine}\}.
\end{align*}
By $\N_H$ we denote the set of interior vertices of $\triH$ (representing the degrees of freedom of the coarse finite element spaces). For every vertex $z \in\N_H$, let $\lambda_z\in V_H$ denote the associated nodal basis function (hat function) characterized by the property $\lambda_y(z)=\delta_{yz}$ for all $y,z\in \N_H$. We will also need the vertex patches 
\begin{equation}
\omega_z := \text{supp} \lambda_z = \text{int}\left(\cup\left\{T\in\tri_H\;|\;x\in T\right\}\right).
\end{equation} 

From now on, we denote by $u_h \in V_h$ the classical finite element (FE) approximation of $u$ in the discrete (highly resolved) space $V_h$, i.e., $u_h \in V_h$ solves
\begin{align}
\label{e:modelweak}b(u_h,v_h)= G(v_h),\quad\text{for all } v_h \in V_h.
\end{align}
We assume that $V_h$ resolves the micro structure, i.e., that the error ${\| u - u_h \|_{H^1(\Omega)}}$ becomes sufficiently small by falling below a given tolerance.
Moreover, we assume that the contrast relative to the fine mesh $\tri_h$ is small in the sense of
\begin{equation}\label{e:spectralboundh}
\underset{x\in \tau}{\operatorname{ess}\sup}
\sup\limits_{v\in\mathbb{R}^{d}\setminus\{0\}}\dfrac{(A( x) v)\cdot v
}{v\cdot v}\lesssim \underset{x\in \tau}{\operatorname{ess}\inf}
\inf\limits_{v\in\mathbb{R}^{d}\setminus\{0\}}\dfrac{(A( x) v)\cdot v
}{v\cdot v},
\end{equation}
for all $\tau\in \trih$. 

\subsection{Abstract quasi-interpolation}\label{ss:intpolabstract}

As stated above, the key tools in our construction are an initial
coarse space $\tilde{V}^{\text{cs}} \subset V_h$ with certain local $L^2$-approximation properties and a quasi-interpolation operator $\QI: V_h\rightarrow \tilde{V}^{\text{cs}}$ that is linear, continuous and surjective. The kernel of this operator is going to be our fine space (or remainder space) $\Vf_h$. 

To simplify the presentation we will for the most part only consider the special case, when the piecewise linear coarse space $V_H$ has the appropriate $L^2$--approximation properties. As we will see, this allows us to treat a very interesting class of highly varying coefficients, namely those that are locally quasi-monotone (in the sense of \cite{PS_IMAJNA2012}). We will comment briefly in Remark~\ref{rem:general1} below on how the framework can be extended also to other initial coarse spaces and to more general highly varying coefficients. 

Thus, from now on we set $\tilde{V}^{\text{cs}} := V_H$ and characterize the interpolation operator via some set of assumptions that must be fulfilled in order to derive a contrast-independent convergence result for the constructed multiscale method. Specific constructions are given in Section~\ref{s:qi}.

\begin{assumption}[Assumptions on the interpolation]\label{a:QI}
We make the following assumptions on the interpolation operator $\QI: V_h\rightarrow V_H$:
\begin{itemize}
\item[(QI1)] $\QI \in L(V_h,V_H)$ is linear and continuous,
\item[(QI2)] the restriction of $\QI$ to $V_H$ is an isomorphism,
\item[(QI3)] there exists a generic constant $\Cint$, such
  that for all $v_h\in V_h$ and for all $T\in\triH$,
\begin{equation*}
H_T^{-1}\|A^{1/2}(v_h-\QI v_h)\|_{L^2(T)}+\|A^{1/2}\nabla(v_h-\QI v_h)\|_{L^2(T)}\leq \Cint \|A^{1/2} \nabla v_h\|_{L^2(\omega_T)}
\end{equation*}
with $\omega_{T} := \text{int}\left({\bigcup \{K \in \triH| \hspace{2pt} K \cap T \neq \emptyset } \}\right)$.
\item[(QI4)] there exists a generic constant $\Cint'$, such
  that for all $v_H \in V_H$ there exists $v_h \in V_h$ with the properties
\begin{align*}
\QI v_h = v_H, \quad & \text{supp} \, v_h \subset \text{supp} \, v_H
\quad \text{and}\\
\|A^{1/2}\nabla v_h\|_{L^2(\Omega)} \le & \ \Cint'
 \|A^{1/2}\nabla v_H\|_{L^2(\Omega)}. 
\end{align*} 
\end{itemize}
\end{assumption}
Some remarks are in order to explain the assumptions (QI1)-(QI4). Linearity and continuity (QI1) as well as invertibility on the finite element space (QI2) are minimal assumptions that are typically satisfied by Cl\'ement-type operators. Note that $\QI$ does not need to be a projection onto the finite element space $V_H$. The conditions ensure that the concatenation $(\QI\vert_{V_H})^{-1}\QI$ always defines such a projection. 
Condition (QI3) yields the crucial local approximation (resp. stability) properties in weighted $L^2$ (resp. energy) norm in $\tilde{V}^{\text{cs}}$. 
Finally, assumption (QI4) ensures that any coarse finite element function $v_H\in V_H$ is the image of some function $v_h\in V_h$ under $\QI$ with smaller or equal support. In other words, there exists some bounded left inverse of $\QI$ that preserves local supports. This property also compensates the possible lack of a projection property. If $\QI$ was a projection then (QI4) would be satisfied by choosing $v_h=v_H$.

\begin{remark}
\label{rem:general1}
More generally, the initial coarse space $\tilde{V}^{\text{cs}}$ could be any subspace of $V_h$ that admits a local basis $\{\tilde{\lambda}_{z,\ell} \in V_h : z\in \N_H \ \text{and} \ \ell=1,\ldots,L_z\}$ with (i) $L_z \ge 1$ basis functions associated with each vertex $z \in \N_H$, (ii) $\text{supp}(\tilde{\lambda}_{z,\ell}) \subset \omega_z$, (iii) $\| \tilde{\lambda}_{z,\ell}\|_{L^\infty(\Omega)} \lesssim 1$, and possibly further conditions such as a partition of unity property; see also \cite{2013arXiv1312.5922P}. Typical examples in the context of high contrast would be standard or generalised multiscale finite element functions \cite{MR1455261,Efendiev2013116} and the associated natural quasi-interpolation operators \cite{MR2861254}. The natural $L^2$-norm in (QI3) will often also be different in those cases. 
\end{remark}

\subsection{Two-scale orthogonal decomposition and global coarse space}\label{ss:multiscale}
In this section, we construct a decomposition of 
the high resolution finite element space $V_h$
into a low-dimensional space $\Vc$ and some high-dimensional remainder space $\Vf$. 
As subspaces of $V_h$, $\Vc$ and $\Vf$ depend on the fine scale discretization parameter $h$. Since the choice of $h$ is not the topic of this paper, this dependence will not be reflected by our notation. Note that the subsequent derivation remains valid in the limit $h\rightarrow 0$ (cf. \cite{2011arXiv1110.0692M,MP12}).

Let $\QI: V_h\rightarrow V_H$ denote an interpolation operator that satisfies the properties (QI1)-(QI2) from Assumption~\ref{a:QI}.
We define $\Vf$ as the kernel of $\QI$ in $V_h$,
\begin{equation*}\label{e:finescale}
 \Vf:=\{v\in V_h\;\vert\;\QI v=0\}.
\end{equation*}
The space $\Vf$ represents the finescale features in $V_h$ not captured by $V_H$. This definition along with properties (QI1) and (QI2) give rise to the decomposition $V_h = V_H \oplus \Vf$. 

The key step towards the definition of an appropriate coarse space is to orthogonalize this decomposition with respect to the scalar product $b(\cdot,\cdot)=( A \nabla \cdot, \nabla \cdot)_{L^2(\Omega)}$ induced by the problem. For this purpose, we define a corresponding $a$-orthogonal projection $\Pf: V_h \rightarrow \Vf$ as follows. Given $v\in V_h$, define $\Pf(v)\in\Vf$ as the unique solution of
\begin{equation*}\label{e:finescaleproj}
 b(\Pf(v),w)=b(v,w),\quad\text{for all }w \in \Vf.
\end{equation*}

The coarse scale space is defined by
$$\Vc:=(1-\Pf)V_H$$ 
and yields the orthogonal splitting
\begin{align}
\label{splitting-2} V_h = \Vc \oplus \Vf\quad\text{with}\quad b(\Vc,\Vf)=0.
\end{align}

We shall introduce a basis of $\Vc$. The image of the nodal basis function $\lambda_z \in V_H$ under the fine scale projection $\Pf$ is denoted by $\phi_z=\Pf(\lambda_z)\in \Vf$, i.e., $\phi_z$ satisfies the corrector problem
\begin{equation}\label{e:corrector}
 b(\phi_z,w)=b(\lambda_z,w),\quad\text{for all }w\in \Vf.
\end{equation}
A basis of $\Vc$ is then given by the modified nodal basis
\begin{equation}\label{e:basiscoarse}
 \{\psi_z:= \lambda_z-\phi_z\;\vert\ z\in\N_H\}.
\end{equation}

\begin{definition}[Global coarse approximation]\label{definition-full-vmm-no-truncation}
The Galerkin approximation $\uc \in \Vc$ of the exact weak solution $u$ of \eqref{e:model} and of the FE reference solution $u_h$ of \eqref{e:modelweak} is defined as the solution of
\begin{equation}
\label{e:modeldiscreteideal} b(\uc,v) = G(v), \quad\text{ for all }v\in\Vc.
\end{equation}
\end{definition}

In general, the basis functions $\psi_z$ have global support $\Omega$ and their pre-computation involves one fine scale computation on the whole domain $\Omega$ per coarse degree of freedom. In this sense, the pre-computation of this basis is expensive and the corresponding Galerkin discretization \eqref{e:modeldiscreteideal} yields small but densely populated stiffness and mass matrices. In certain situations, it may still be a reasonable coarsening (see Section~\ref{ss:errorglobal}). 

A local basis may be achieved by localization of the corrector problems. Since the right-hand side of \eqref{e:corrector} induced by $\lambda_z$ has small support, the correctors $\phi_z$ show an exponential decay outside of the support of $\lambda_z$. Hence, we are able to localize the correctors and their computation to local subdomains in Section~\ref{ss:feasible}. 

\subsection{Localized coarse space}\label{ss:feasible}
We approximate the global coarse space $\Vc$ from the previous section by truncating the corrector problems \eqref{e:corrector} for the basis functions to local patches of coarse elements as suggested in \cite{2011arXiv1110.0692M}.

Let $k\in\mathbb{N}$ be a discretization parameter that reflects the localization of the finescale computations. Define nodal patches of $k$-th order $\omega_{z,k}$ about $z\in\N_H$ by
\begin{equation}\label{e:omega}
 \begin{aligned}
 \omega_{z,1}&:=\support \lambda_z=\cup\left\{T\in\tri_H\;|\;x\in T\right\},\\
 \omega_{z,k}&:=\cup\left\{T\in\tri_H\;|\;T\cap {\omega}_{z,{k-1}}\neq\emptyset\right\}\quad k=2,3,4\ldots .
\end{aligned}
\end{equation}
Define localized finescale spaces 
$$\Vf(\omega_{z,k}):=\{v\in\Vf\;\vert\;v\vert_{\Omega\setminus\omega_{z,k}}=0\},\quad z\in\N_H,$$
by intersecting $\Vf$ with those functions that vanish outside the patch $\omega_{z,k}$.
\begin{definition}[Local correctors]\label{local-corrector}
Local correctors $\phi_{z,k}\in \Vf(\omega_{z,k})$ are unique solutions of
\begin{equation}\label{e:Tnodallocal}
 b(\phi_{z,k},w)=b(\lambda_z,w),\quad\text{for all }w\in \Vf(\omega_{z,k}).
\end{equation}
\end{definition}
The local correctors $\phi_{z,k}\in \Vf(\omega_{z,k})$ are approximations of the global correctors $\phi_z\in\Vf$ from \eqref{e:corrector} with local support $\omega_{z,k}$. Note that homogeneous Dirichlet boundary condition are enforced on $\partial\omega_{z,k}$. We define localized coarse spaces
\begin{equation*}
\Vc_k=\operatorname*{span}\{\psi_{z,k}:=\lambda_z-\phi_{z,k}\;\vert\;x\in\N_H\}\subset V.
\end{equation*}
\begin{definition}[Local coarse approximation]\label{definition-full-vmm-truncation}
Given some localization parameter $k\in\mathbb{N}$, the Galerkin approximation of \eqref{e:model} and \eqref{e:modelweak} reads: find $\uc_k\in\Vc_k$ such that
\begin{equation}\label{e:modeldiscretek}
 b(\uc_k,v) =  G(v), \quad\text{ for all }v\in\Vc_k.
\end{equation}
\end{definition}
Note that $\dimension\Vc_k=|\N_H|=\dimension V_H$, that is, the number of degrees of freedom of the proposed method \eqref{e:modeldiscreteideal} is the same as for the classical finite element method on the coarse mesh $\tri_H$, or more generally, the same as for the initial coarse space $\tilde{V}^{\text{cs}}$. The basis functions of the multiscale method have local support. The overlap is proportional to the parameter $k$. The error analysis of Section~\ref{ss:decay} shows that the choice $k\approx2\log(H^{-1})+{1}/{2}\log(\beta/\alpha)$ suffices to preserve the desired linear convergence in $H$.

\subsection{Alternative localization techniques}\label{ss:locmod}
A modified technique for localization is presented in \cite{2012arXiv1211.5954H}. Define element patches of $k$-th order $\omega_{T,k}$ about $T\in\tri_H$ by
\begin{equation*}
 \begin{aligned}
 \omega_{T,1}&:=T,\\
 \omega_{T,k}&:=\cup\left\{T\in\tri_H\;|\;T\cap {\omega}_{T,{k-1}}\neq\emptyset\right\},\quad k=2,3,4\ldots .
\end{aligned}
\end{equation*}
Define localized finescale spaces 
$$\Vf(\omega_{T,k}):=\{v\in\Vf\;\vert\;v\vert_{\Omega\setminus\omega_{T,k}}=0\},\quad T\in\tri_H,$$
by intersecting $\Vf$ with those functions that vanish outside the patch $\omega_{T,k}$.
The corrections are then computed in a two-step procedure. 
First, for any element $T\in\triH$ and for any $y\in\N_H(T):=\N_H\cap T$, compute
$\tilde{\psi}_{T,y,k}\in \Vf(\omega_{T,k})$ as the unique solution of
\begin{equation}\label{e:localprobs}
 b(\tilde{\psi}_{T,y,k},w)=(A\nabla \lambda_y,\nabla w)_{L^2(T)}\quad\text{for all }w\in \Vf(\omega_{T,k}).
\end{equation}
For any node $z\in\N_H$, the corrector $\phi_{z,k}$ of $\lambda_z$ is then defined by
\begin{equation}\label{e:localprobs2}
 \tilde{\psi}_{z,k}:=\sum_{y\in \N_H\cap \omega_{z,1}}\tilde{\psi}_{T,y,k}.
\end{equation}
The local problems in \eqref{e:localprobs} are independent of each other and may be solved in parallel. However, in contrast to the localization of Section~\ref{ss:feasible}, the computation of the final correctors \eqref{e:localprobs2} requires communication among neighboring nodes. 
This two-step technique preserves the partition of unity property of the original basis in $V_H$ and, hence, yields slightly improved error bounds (cf. Remark~\ref{rem:hminus1}) when compared with the localization described above. The improved accuracy has also been observed in numerical experiments (cf. \cite{2012arXiv1211.5954H}). More general localization techniques with similar properties are discussed in \cite{2013arXiv1312.5922P}. Nevertheless, with regard to the already very technical error analysis of this paper, we will not include this improved localization strategy in our theory.
 
\section{Abstract a priori error analysis}\label{s:apriori}
This section studies the error of the coarse scale approximations of Definitions~\ref{definition-full-vmm-no-truncation} and \ref{definition-full-vmm-truncation} under the abstract assumptions (QI1)-(QI4) on the underlying quasi-interpolation operator $\QI$. 

Here and throughout this paper, \label{p:lesssim} the notation $a\lesssim b$ abbreviates $a\leq Cb$ with some multiplicative constant $C>0$ which only depends on the domain $\Omega$ and the shape regularity of underlying finite element meshes. We emphasize that $C$ does \emph{not} depend on discretization parameters and the coefficient $A$. Furthermore, $a\approx b$ abbreviates $a\lesssim b\lesssim a$.
For parameter-dependent inequalities, $a(\xi)\lesssim b(\xi)$ means that there exists some constant $C>0$ so that $a(\xi)\leq C b(\xi)$ holds for all parameters $\xi\in\Xi$, where the parameter set $\Xi$ will always be clear from the context.

\subsection{Error estimates for the global basis}\label{ss:errorglobal}
The following lemma shows the potential of the coarse space $\Vc$ and the corresponding coarse approximation $\uc$.
\begin{lemma}[Error of the global method]\label{l:ideal}
Let $u_h\in V$ solve \eqref{e:modelweak} and $\uc\in \Vc$ solve \eqref{e:modeldiscreteideal}. Under the condition (QI1)--(QI3), we have
\begin{equation*}
\|A^{1/2}\nabla(u_h-\uc)\|_{L^2(\Omega)}\lesssim \Cint\alpha^{-1/2}\|H g\|_{L^2(\Omega)}.
\end{equation*}
The estimate remains valid when $u_h$ is replaced with the weak solution $u\in V$ of \eqref{e:model}. 
\end{lemma}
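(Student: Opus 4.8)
The plan is to exploit the $b$-orthogonality of the splitting \eqref{splitting-2}, which implies that $\uc$ is the $b$-orthogonal projection of $u_h$ onto $\Vc$, so that $u_h - \uc = \Pf u_h \in \Vf$ and hence
\begin{equation*}
\|A^{1/2}\nabla(u_h-\uc)\|_{L^2(\Omega)}^2 = b(u_h-\uc, u_h-\uc) = b(u_h, \Pf u_h) = G(\Pf u_h),
\end{equation*}
using $b(\uc,\Pf u_h)=0$ and that $\Pf u_h \in \Vf \subset V_h$ is a legitimate test function in \eqref{e:modelweak}. Writing $e := \Pf u_h = u_h - \uc$, the task reduces to bounding $G(e) = \int_\Omega g\, e\,\dx$ by $\Cint\,\alpha^{-1/2}\|Hg\|_{L^2(\Omega)}\,\|A^{1/2}\nabla e\|_{L^2(\Omega)}$.

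The key observation is that $e \in \Vf = \kernel \QI$, so $\QI e = 0$, and therefore $e - \QI e = e$. Applying the local approximation estimate (QI3) with $v_h = e$ on each $T\in\triH$ gives
\begin{equation*}
H_T^{-1}\|A^{1/2} e\|_{L^2(T)} \le \Cint \|A^{1/2}\nabla e\|_{L^2(\omega_T)},
\end{equation*}
hence $\|H^{-1} A^{1/2} e\|_{L^2(\Omega)} \lesssim \Cint \|A^{1/2}\nabla e\|_{L^2(\Omega)}$ after summing over $T$ and using finite overlap of the patches $\omega_T$ (shape regularity). Now estimate, using the second norm equivalence in \eqref{e:norms} only in the harmless direction $\|e\|_{L^2} \le \alpha^{-1/2}\|A^{1/2}e\|_{L^2}$ on each element,
\begin{equation*}
G(e) = \int_\Omega g\,e\,\dx \le \|Hg\|_{L^2(\Omega)}\,\|H^{-1} e\|_{L^2(\Omega)} \le \alpha^{-1/2}\|Hg\|_{L^2(\Omega)}\,\|H^{-1} A^{1/2} e\|_{L^2(\Omega)},
\end{equation*}
and combine with the previous bound to obtain $G(e) \lesssim \Cint\,\alpha^{-1/2}\|Hg\|_{L^2(\Omega)}\,\|A^{1/2}\nabla e\|_{L^2(\Omega)}$. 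Dividing through by $\|A^{1/2}\nabla e\|_{L^2(\Omega)}$ yields the claim for $u_h$.

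For the final sentence, I would note that $\Vc \subset V_h \subset V$, so $\uc$ is also the $b$-orthogonal projection of the exact solution $u$ onto $\Vc$ (since $b(u - \uc, v) = G(v) - b(\uc,v) = 0$ for all $v \in \Vc$ by \eqref{e:model} and \eqref{e:modeldiscreteideal}), and the identical argument applies with $u_h$ replaced by $u$ and test function $e := u - \uc \in V_h$ (here one uses that $e$, being $b$-orthogonal to $\Vc$ and lying in $V_h = \Vc\oplus\Vf$, actually lies in $\Vf$, so again $\QI e = 0$); alternatively one simply adds and subtracts $u_h$ and uses Galerkin orthogonality of $u_h$ in $V_h$ together with the first estimate. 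I do not anticipate a serious obstacle here: the only subtlety is making sure that (QI3) is applied to a function in the kernel of $\QI$ and that the patch overlap in the summation step is controlled purely by shape regularity — both routine — and that the constant $\alpha^{-1/2}$ is the only place where a crude (but contrast-benign, since it is a one-sided and $H$-scaled bound) use of \eqref{e:norms} enters.
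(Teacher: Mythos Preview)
Your proof is correct and follows essentially the same route as the paper's: Galerkin orthogonality places $e = u_h - \uc$ in $\Vf$, hence $\QI e = 0$, and then (QI3) together with Cauchy--Schwarz over elements and the finite overlap of the patches $\omega_T$ yields the bound. One small slip in your final paragraph: $e = u - \uc$ is in general \emph{not} in $V_h$ (since $u \notin V_h$), so your first argument there does not go through as written; the paper's remark about replacing $u_h$ by $u$ should be read in the continuous limit $h \to 0$ (as noted just before \eqref{e:finescale}), where $V_h$ is replaced by $V$ and $\Vf$ by $\ker\QI$ in $V$ --- then $u - \uc \in \Vf$ and the identical argument applies verbatim.
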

\begin{proof}
The proof is almost verbatim the same as in \cite[Lemma 3]{2011arXiv1110.0692M}. The Galerkin orthogonality implies that the error $e:=u_h-\uc$ and the coarse space $\Vc$ are $b$-orthogonal. This shows that $e\in\Vf$ is a fine scale function and (QI3) proves, for any $T\in\triH$,
\begin{equation*}
\|e\|_{L^2(T)}= \|e- \QI e\|_{L^2(T)}\leq \alpha^{-1/2}\|A^{1/2}(e- \QI e)\|_{L^2(T)}
\leq \alpha^{-1/2}\Cint H\|A^{1/2}\nabla e\|_{L^2(\omega_T)}.
\end{equation*}
This, Galerkin orthogonality and the Cauchy-Schwarz inequality for sums then yield
\begin{multline*}
\|A^{1/2}\nabla e\|_{L^2(\Omega)}^2= b(u,u-\uc) = G(e)
\leq \sum_{T\in\triH}\|g\|_{L^2(T)}\|e\|_{L^2(T)}\\\lesssim \Cint\alpha^{-1/2}\|H g\|_{L^2(\Omega)}\|A^{1/2}\nabla e\|_{L^2(\Omega)},
\end{multline*}
where the constant hidden in the $\lesssim$ notation reflects the overlap of the element patches $\omega_T$.
\end{proof}
Note that the constant $\Cint$ appearing in the error bound of Lemma~\ref{l:ideal} can depend on the contrast if $\QI$ is not chosen properly; see also Section~\ref{ss:classicalQI} for a related discussion.

\subsection{Decay of global correctors}\label{ss:decay}
The following lemma is the key result of the paper.
\begin{lemma}[Decay of global correctors]\label{l:decay}
Let (QI1)--(QI4) be satisfied. For any node $z\in\N_H$ and any $k\in\mathbb{N}$, the correctors $\phi_z$ satisfy the estimate
$$\|A^{1/2}\nabla \phi_z\|_{L^2(\Omega\setminus\omega_{z,k})}\lesssim \exp\left(-\tfrac{k}{\Cint^2\Cint'}\right)\|A^{1/2}\nabla \phi_z\|_{L^2(\Omega)}$$
with constants $\Cint$, $\Cint'$ from Assumption~\ref{a:QI}. 
\end{lemma}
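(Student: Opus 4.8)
The plan is to establish exponential decay of the global corrector $\phi_z$ by the now-standard iteration/Caccioppoli-type argument for the LOD framework, but carried out entirely in the $A$-weighted energy norm so that no contrast-dependent norm equivalence is invoked. First I would introduce a cut-off function: for $k\in\mathbb{N}$ fixed, pick a $\triH$-piecewise linear function $\eta=\eta_{z,k}\in V_H$ (or its nodal interpolant) with $\eta=0$ on $\omega_{z,k-1}$, $\eta=1$ outside $\omega_{z,k}$, $0\le\eta\le1$, and $\|\grad\eta\|_{L^\infty}\lesssim H^{-1}$ on the one-layer annulus $R:=\omega_{z,k}\setminus\omega_{z,k-1}$. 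The quantity to control is $\|A^{1/2}\grad\phi_z\|_{L^2(\Omega\setminus\omega_{z,k})}^2$, which I would bound by $b(\phi_z,\eta^2\phi_z)$ up to commutator terms supported only on $R$.

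The next step is the crucial use of the corrector equation together with (QI4) and (QI3). Since $\eta^2\phi_z$ is not in $\Vf$, I would use the bounded support-preserving left inverse from (QI4): let $w:=\eta^2\phi_z - z_h$ where $z_h\in V_h$ is chosen via (QI4) so that $\QI z_h=\QI(\eta^2\phi_z)$, $\support z_h\subset \support \QI(\eta^2\phi_z)\subset\omega_z$-neighbourhood of $R$, and $\|A^{1/2}\grad z_h\|_{L^2}\lesssim \Cint'\|A^{1/2}\grad \QI(\eta^2\phi_z)\|_{L^2}$. Then $w\in\Vf$, so $b(\phi_z,w)=b(\lambda_z,w)=0$ because $\lambda_z$ and $w$ have disjoint supports once $k\ge 3$ (here is where the patch structure enters). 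Hence $b(\phi_z,\eta^2\phi_z)=b(\phi_z,z_h)$, and since $z_h$ is supported near $R$, this reduces the whole energy on the far region to quantities living on the annulus $R$: after expanding $b(\phi_z,\eta^2\phi_z)$ and applying Cauchy--Schwarz, I get
\begin{equation*}
\|A^{1/2}\grad\phi_z\|_{L^2(\Omega\setminus\omega_{z,k})}^2 \lesssim \|A^{1/2}\grad\phi_z\|_{L^2(R)}^2 + \big(\text{cross terms with }A^{1/2}\grad\eta\,\phi_z\text{ on }R\big) + \|A^{1/2}\grad\phi_z\|_{L^2(\text{supp}\,z_h)}\,\|A^{1/2}\grad z_h\|_{L^2}.
\end{equation*}
The terms of the form $\|A^{1/2}(\grad\eta)\phi_z\|_{L^2(R)}\lesssim H^{-1}\|A^{1/2}\phi_z\|_{L^2(R)}$ are handled by applying (QI3) to $\phi_z$ on the relevant coarse elements (using $\QI\phi_z=0$): $H_T^{-1}\|A^{1/2}\phi_z\|_{L^2(T)}\lesssim \Cint\|A^{1/2}\grad\phi_z\|_{L^2(\omega_T)}$, and similarly $\|A^{1/2}\grad\QI(\eta^2\phi_z)\|_{L^2}\lesssim \Cint\|A^{1/2}\grad(\eta^2\phi_z)\|$ on a slightly enlarged annulus, again convertible to $\phi_z$-energy on $R$ (and one more layer) via (QI3) and the product rule. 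Combining, one obtains the one-step recursion
\begin{equation*}
\|A^{1/2}\grad\phi_z\|_{L^2(\Omega\setminus\omega_{z,k})}^2 \le C\,\Cint^2\,\Cint'\;\|A^{1/2}\grad\phi_z\|_{L^2(\omega_{z,k+1}\setminus\omega_{z,k-1})}^2,
\end{equation*}
i.e. the far energy is a fixed fraction of the energy on a bounded-width annulus near the cut layer.

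From this recursion the exponential bound follows by the standard telescoping trick: writing $E_k:=\|A^{1/2}\grad\phi_z\|_{L^2(\Omega\setminus\omega_{z,k})}^2$, the annulus energy is $E_{k-2}-E_{k+1}$, so $E_k\le C_0(E_{k-2}-E_{k+1})$ with $C_0\approx \Cint^2\Cint'$, hence $E_k\le \frac{C_0}{1+C_0}E_{k-2}$ (after absorbing $E_{k+1}\le E_k$), which iterated over $\approx k/2$ steps gives $E_k\lesssim \big(\tfrac{C_0}{1+C_0}\big)^{k/2}E_0 \le \exp(-k/(C\Cint^2\Cint'))\,\|A^{1/2}\grad\phi_z\|_{L^2(\Omega)}^2$; taking square roots yields the claim. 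The main obstacle I anticipate is the bookkeeping in the middle step: carefully tracking over how many layers of coarse elements the cut-off commutator terms and the (QI3)/(QI4) estimates spread the support — this is what fixes the constant in the exponent as $\Cint^2\Cint'$ rather than, say, $\Cint\Cint'$ — and making sure that the function $z_h$ produced by (QI4) genuinely stays localized so that $b(\phi_z,z_h)$ only sees the annulus and not the interior where $\phi_z$ is large. The contrast-robustness is automatic here precisely because every inequality above is phrased in $\|A^{1/2}\grad\cdot\|$ and $\|A^{1/2}\cdot\|$ norms and never passes through \eqref{e:norms}.
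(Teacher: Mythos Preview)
Your overall strategy is the right one and parallels the paper's, but there is a genuine gap at the crucial step. The product $\eta^{2}\phi_{z}$ does not lie in $V_{h}$: since $\eta$ is $\triH$-piecewise affine and $\phi_{z}\in V_{h}$ is $\trih$-piecewise affine, $\eta^{2}\phi_{z}$ is $\trih$-piecewise cubic. Two things then fail. First, $\QI(\eta^{2}\phi_{z})$ is undefined in the abstract framework, because $\QI$ is only assumed to act on $V_{h}$ (cf.\ (QI1)). Second, and more seriously, your test function $w=\eta^{2}\phi_{z}-z_{h}$ is not in $V_{h}\supset\Vf$, so the corrector equation $b(\phi_{z},w)=b(\lambda_{z},w)$ cannot be invoked to kill the interaction with $\lambda_{z}$. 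The paper deals with exactly this point by inserting the fine nodal interpolant $I_{h}$: it replaces $\zeta\phi_{z}$ by $I_{h}(\zeta\phi_{z})\in V_{h}$, applies (QI4) to $\QI(I_{h}(\zeta\phi_{z}))\in V_{H}$, and then estimates the extra term $\|A^{1/2}\nabla(\zeta\phi_{z}-I_{h}(\zeta\phi_{z}))\|_{L^{2}}$ elementwise. That last estimate is where the fine-mesh low-contrast assumption~\eqref{e:spectralboundh} is needed; without it, converting the unweighted interpolation bounds for $I_{h}$ into $A$-weighted bounds would re-introduce the global contrast. Your sketch omits this mechanism entirely, so as written the argument does not close.

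Aside from this, your iteration differs from the paper's only in style: you use a one-layer cut-off together with the telescoping absorption $E_{k}\le C_{0}(E_{k-2}-E_{k+1})\Rightarrow E_{k}\le\tfrac{C_{0}}{1+C_{0}}E_{k-2}$, whereas the paper takes a $j$-layer cut-off with $\|\nabla\zeta_{k,j}\|_{L^{\infty}}\lesssim(jH)^{-1}$ and then chooses $j\approx\Cint^{2}\Cint'$ to force a fixed contraction $e^{-1}$ per block of $j$ layers. Both routes give the same exponential rate $\exp(-ck/(\Cint^{2}\Cint'))$; the paper's version simply makes the origin of the exponent constant visible as an explicit $j^{-1}$ throughout the intermediate bounds.
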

\begin{proof}
Let $z\in \N_H$ be arbitrary but fixed and, for the ease of notation, define $\phi:=\phi_z$ and  $\omega_{k}:=\omega_{z,k}$. 

For $j=1,\ldots,k-1$, define cut-off functions $\zeta_{k,j} :\Omega\rightarrow [0,1]\;\in W^{1,\infty}(\Omega)$ such that
\begin{subequations}\label{e:cutoff}
\end{subequations}
\begin{align}
(\zeta_{k,j})\vert_{\omega_{k-j}}&=0,\tag{\ref{e:cutoff}.a}\label{e:cutoff.a}\\
(\zeta_{k,j})\vert_{\Omega\setminus \omega_{k}}&=1,\;\text{and}\tag{\ref{e:cutoff}.b}\label{e:cutoff.b}\\
\forall T\in\triH,\;\|\nabla\zeta_{k,j}\|_{L^\infty(T)}&\lesssim (j H_T)^{-1}.\tag{\ref{e:cutoff}.c}\label{e:cutoff.c}
\end{align} Our particular choice of $\zeta_{k,j}$ is continuous and $\tri_H$-piecewise affine with nodal values
\begin{equation*}\label{e:cutoffH}
\begin{aligned}
 \zeta_{k,j}(y) &:= 0\quad\text{for all }y\in\N_H\cap \omega_{k-j},\\
 \zeta_{k,j}(y) &:= 1\quad\text{for all }y\in\N_H\cap \left(\Omega\setminus \omega_{k}\right),\text{ and}\\
 \zeta_{k,j}(y)&:= m/j\quad\text{for all }y\in\N_H\cap \partial\omega_{k-j+m},\;m=0,1,2,\ldots,j.
\end{aligned}
\end{equation*}

The cut-off function $\zeta_{k,j}$ allows one to estimate
\begin{multline}\label{e:decay0}
\|A^{1/2}\nabla \phi\|_{L^2(\Omega\setminus\omega_{k})}^2\leq (\zeta_{k,j} A\nabla \phi,\nabla \phi)_{L^2(\Omega\setminus\omega_{k-j})}\\= (A\nabla \phi,\nabla(\zeta_{k,j}
\phi))_{L^2(\Omega\setminus\omega_{k-j})}-\;\left(\phi A\nabla\phi,\nabla\zeta_{k,j}\right)_{L^2(\Omega\setminus\omega_{k-j})}.
\end{multline}

Let $I_h: V\cap C(\overline{\Omega})\rightarrow V_h$ denote the standard nodal interpolation operator with respect to the fine mesh $\tri_h$. 
According to (QI4) from Assumption~\ref{a:QI} there exists some $v\in \Vf(\Omega\setminus\omega_{k-j})$ such that $\QI(v)$ equals the coarse finite element function $\QI(I_h(\zeta_{k,j}\phi))\in V_H$. Introducing this into \eqref{e:decay0}, expanding and applying the Cauchy-Schwarz inequality yields
\begin{multline}\label{e:decay1}
\|A^{1/2}\nabla \phi\|_{L^2(\Omega\setminus\omega_{k})}^2\leq \|A^{1/2}\nabla \phi\|_{L^2(\Omega\setminus\omega_{k-j})} \|A^{1/2}\nabla(\zeta_{k,j}
\phi-I_h(\zeta_{k,j}
\phi)\|_{L^2(\Omega\setminus\omega_{k-j})}\\+|(A\nabla \phi,\nabla(I_h(\zeta_{k,j}
\phi)-v))_{L^2(\Omega\setminus\omega_{k-j})}|+\|A^{1/2}\nabla \phi\|_{L^2(\Omega\setminus\omega_{k-j})}\|A^{1/2}\nabla v\|_{L^2(\Omega\setminus\omega_{k-j})}\\+\|\phi A^{1/2}\nabla \zeta_{k,j}\|_{L^2(\Omega\setminus\omega_{k-j})}\|A^{1/2}\nabla\phi\|_{L^2(\Omega\setminus\omega_{k-j})}=:M_1+M_2+M_3+M_4.
\end{multline}
The four terms on the right-hand side of \eqref{e:decay1} are bounded separately as follows.

{\it Bound for $M_1$.} 
Recall the (local) approximation and stability properties of the nodal interpolation operator $I_h$ (in unweighted norms), i.e.
\begin{equation}\label{e:interr}
 \|\nabla (v-I_h v)\|_{L^2(t)}\lesssim h_t\|\nabla^2 v\|_{L^2(t)}\quad\text{and}\quad \|\nabla I_h v\|_{L^2(t)}\lesssim \|\nabla  v\|_{L^2(t)}
\end{equation}
for all polynomials $v$ on some element $t\in\tri_h$.  
Since $\zeta_{k,j}\phi$ is $\tri_h$-piecewise quadratic polynomial, this leads to 
\begin{multline*}\|A^{1/2}\nabla(\zeta_{k,j}\phi-I_h(\zeta_{k,j}\phi)\|_{L^2(\Omega)}^2\leq \sum_{t\in\tri_h}\|A\|_{L^\infty(t)}\|\nabla(\zeta_{k,j}\phi-I_h(\zeta_{k,j}\phi)\|_{L^2(t)}^2\\\lesssim \sum_{t\in\tri_h}\|A\|_{L^\infty(t)}h_t^2 \|\nabla^2(\zeta_{k,j}\phi)\|_{L^2(t)}^2\leq \sum_{t\in\tri_h}\|A\|_{L^\infty(t)}h_t^2 \|\nabla\zeta_{k,j}\cdot\nabla\phi\|_{L^2(t)}^2\\
\lesssim  j^{-2}\|A^{1/2}\nabla\phi\|_{L^2(\Omega\setminus\omega_{k-j})}^2,
\end{multline*}
where in the last step we used \eqref{e:cutoff},\eqref{e:spectralboundh}, and the trivial bound $h_t\leq H$. Thus, the bound for $M_1$ reads
$$M_1\lesssim j^{-1}\|A^{1/2}\nabla\phi\|_{L^2(\Omega\setminus\omega_{k-j})}^2.$$
{\it Bound for $M_2$.} The function $v$ was chosen so that $(I_h(\zeta_{k,j}
\phi)-v) \in \Vf$, which in conjunction with \eqref{e:corrector} implies that
$$M_2=|(A\nabla \lambda_z,\nabla (I_h(\zeta_{k,j}\phi)-v))_{L^2(\Omega)}|=0,$$ because the intersection of the supports of $\lambda_z$ and $I_h(\zeta_{k,j}\phi)-v$ has measure zero. \\
{\it Bound for $M_3$.} 
Due to (QI4), $v$ satisfies the estimate
$$\|A^{1/2}\nabla v\|_{L^2(\Omega\setminus\omega_{k-j})}\leq \Cint'\|A^{1/2}\nabla \QI (\zeta_{k,j}\phi)\|_{L^2(\Omega\setminus\omega_{k-j})}.$$
Since $\phi\in \Vf$ (i.e., $\QI\phi =0$) and $\phi = I_h \phi$, we have
$$0=\QI\phi=\QI I_h\phi= \overline{\zeta}_{k,j} \QI I_h\phi= \QI I_h(\overline{\zeta}_{k,j}\phi)$$
for all constants $\overline{\zeta}_{k,j}:=|T|^{-1}\int_T \zeta_{k,j}\dx$ with $T\in\tri_H$. Using again (QI3), \eqref{e:cutoff} and \eqref{e:spectralboundh} and recalling that $\QI\phi=0$, this implies
\begin{multline*}
\|A^{1/2}\nabla \QI (\zeta_{k,j}\phi)\|_{L^2(\Omega\setminus\omega_{k-j})}^2\leq\Cint'^2 \|A^{1/2}\nabla \QI I_h((\zeta_{k,j}-\overline{\zeta}_{k,j})\phi)\|_{L^2(\Omega\setminus\omega_{k-j})}^2\\
\lesssim \Cint'^2\Cint^2\|A^{1/2}\nabla( (\zeta_{k,j}-\overline{\zeta}_{k,j})\phi)\|_{L^2(\Omega\setminus\omega_{k-j-1})}\\
\lesssim \Cint'^2\Cint^2\hspace{-3ex}\sum_{T\in\tri_H:T\subset \Omega\setminus\omega_{k-j-1}}\hspace{-3ex}\biggl(\|\zeta_{k,j}-\overline{\zeta}_{k,j}\|_{L^\infty(T)}^2 \|A^{1/2}\nabla\phi\|_{L^2(T)}^2\\+\|\nabla\zeta_{k,j}\|_{L^\infty(T)}^2 \|A^{1/2}(\phi-\QI\phi)\|_{L^2(T)}^2\biggr).
\end{multline*}
The bound for $M_3$ now follows by applying 
Poincar\'e's inequality, the approximation property (QI3) of $\QI$, and the property \eqref{e:cutoff.c} of $\zeta_{k,j}$:
$$M_3\lesssim \Cint^2\Cint' j^{-1}\|A^{1/2}\nabla\phi\|_{L^2(\Omega\setminus\omega_{k-j-2})}.$$

{\it Bound for $M_4$.} 
Similar arguments as before (based on $\QI\phi=0$, the approximation property (QI3) of $\QI$, and the property \eqref{e:cutoff.c} of $\zeta_{k,j}$) lead to the following bound
$$M_4\lesssim \Cint j^{-1}\|A^{1/2}\nabla\phi\|_{L^2(\Omega\setminus\omega_{k-j-1})}.$$

The combination of \eqref{e:decay1} and the bounds for $M_1,\ldots,M_4$ readily yields
\begin{equation*}
\|A^{1/2}\nabla\phi\|_{L^2(\Omega\setminus\omega_{k})}\lesssim \Cint^2\Cint' j^{-1}\|A^{1/2}\nabla\phi\|_{L^2(\Omega\setminus\omega_{k-j-2})}.
\end{equation*}
A sufficiently large enough choice of $j\approx\Cint^2\Cint'$ now establishes the following contraction
\begin{equation}\label{e:decay2}
\|A^{1/2}\nabla\phi\|_{L^2(\Omega\setminus\omega_{k})}\leq \exp(-1)\|A^{1/2}\nabla\phi\|_{L^2(\Omega\setminus\omega_{k-j-2})}.
\end{equation}
We emphasize that the choice of $j$ is independent of $k$ and mesh sizes $H$ and $h$. If the constants $\Cint$, $\Cint'$ in Assumption~\ref{a:QI} are independent of $A$, then $j$ is independent of $A$ as well. 

Repeated application of \eqref{e:decay2} for $k, k\leftarrow (k-j-2), \ldots$ yields
\begin{equation*}
\|A^{1/2}\nabla\phi\|_{L^2(\Omega\setminus\omega_{k})}\leq \exp\left(\left\lfloor\tfrac{k}{j-2}\right\rfloor\right)\|A^{1/2}\nabla\phi\|_{L^2(\Omega)}.
\end{equation*}
This is the assertion up to rephrasing the decay rate in terms of $k,\Cint,\Cint'$ and hiding further uncritical constants in the notation ``$\lesssim$''.
\end{proof}

\subsection{Error estimates for the localized basis}
The error estimate for the localized method from Definition~\ref{definition-full-vmm-truncation} follows from the global error bound of Lemma~\ref{l:ideal} and the decay property of the global correctors established in Lemma~\ref{l:decay} via some algebraic manipulations. 
\begin{theorem}[Energy-error estimate for local coarse approximation]\label{t:main}
If (QI1)--(QI4) are satisfied with constants $\Cint\approx\Cint'\approx 1$ independent of $A$, then there exist $c\approx 1$ such that 
\begin{multline*}
\|A^{1/2}\nabla (u - \uc_k)\|_{L^2(\Omega)}\lesssim \|A^{1/2}\nabla (u - u_h)\|_{L^2(\Omega)}+\alpha^{-1/2}\|H g\|_{L^2(\Omega)}\\
+\sqrt{\ccont}\alpha^{-1/2}H^{-1}e^{-ck}\|g\|_{H^{-1}(\Omega)}.
\end{multline*}
If, moreover, $k\geq \tfrac{1}{2c}\log(\ccont)+\tfrac{2}{c}\log (1/H)$, then 
\begin{equation}\label{e:energyerror}
\|A^{1/2}\nabla (u - \uc_k)\|_{L^2(\Omega)}\lesssim \|A^{1/2}\nabla (u - u_h)\|_{L^2(\Omega)}+\alpha^{-1/2}H\| g\|_{L^2(\Omega)}.
\end{equation}
\end{theorem}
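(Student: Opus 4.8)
The plan is to derive the local error estimate by comparing $\uc_k$ with the global coarse approximation $\uc$ from Definition~\ref{definition-full-vmm-no-truncation}, using the triangle inequality
\[
\|A^{1/2}\nabla(u-\uc_k)\|_{L^2(\Omega)}\leq \|A^{1/2}\nabla(u-u_h)\|_{L^2(\Omega)}+\|A^{1/2}\nabla(u_h-\uc)\|_{L^2(\Omega)}+\|A^{1/2}\nabla(\uc-\uc_k)\|_{L^2(\Omega)}.
\]
The first term is the reference discretisation error, which we keep as is (recall we assume $V_h$ resolves the microstructure). The second term is controlled directly by Lemma~\ref{l:ideal}, giving $\lesssim \alpha^{-1/2}\|Hg\|_{L^2(\Omega)}$ since $\Cint\approx 1$. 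So the work is entirely in the third term, the \emph{localisation error} $\|A^{1/2}\nabla(\uc-\uc_k)\|_{L^2(\Omega)}$.

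To bound the localisation error, I would first quantify how much the individual basis functions change under truncation, i.e.\ estimate $\|A^{1/2}\nabla(\phi_z-\phi_{z,k})\|_{L^2(\Omega)}$ in terms of the decay tail $\|A^{1/2}\nabla\phi_z\|_{L^2(\Omega\setminus\omega_{z,k-1})}$ (or a similarly shifted patch). This is a standard Galerkin/cut-off argument: $\phi_{z,k}$ is the $b$-orthogonal projection of $\lambda_z$ onto $\Vf(\omega_{z,k})$, so $\|A^{1/2}\nabla(\phi_z-\phi_{z,k})\|_{L^2(\Omega)}\leq \|A^{1/2}\nabla(\phi_z-\zeta\phi_z)\|_{L^2(\Omega)}$ for any admissible $\trih$-piecewise affine cut-off $\zeta$ supported away from $\partial\omega_{z,k}$ (modulo the usual need to project $\zeta\phi_z$ back into $\Vf$ via (QI4) and $I_h$, exactly as in the proof of Lemma~\ref{l:decay}). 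Combining with Lemma~\ref{l:decay} this yields
\[
\|A^{1/2}\nabla(\phi_z-\phi_{z,k})\|_{L^2(\Omega)}\lesssim e^{-ck}\,\|A^{1/2}\nabla\phi_z\|_{L^2(\Omega)}\lesssim e^{-ck}\,\|A^{1/2}\nabla\lambda_z\|_{L^2(\Omega)},
\]
where the last step uses that $\phi_z=\Pf\lambda_z$ is a $b$-orthogonal projection and hence $\|A^{1/2}\nabla\phi_z\|\leq\|A^{1/2}\nabla\lambda_z\|$, and $c\approx 1/(\Cint^2\Cint')\approx 1$.

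Next I would pass from the per-basis-function estimate to a bound on $\|A^{1/2}\nabla(\uc-\uc_k)\|_{L^2(\Omega)}$. Write $\uc=\sum_z c_z\psi_z$ and $\uc_k=\sum_z c_z^k\psi_{z,k}$. The cleanest route is the standard two-part split: (i) bound $\|A^{1/2}\nabla\sum_z c_z(\psi_z-\psi_{z,k})\|$ using the finite-overlap of the patches $\omega_{z,k}$ (which costs a factor polynomial in $k$, absorbable into the exponential), the per-function decay above, and a coefficient bound $\sum_z c_z^2\|A^{1/2}\nabla\lambda_z\|_{L^2(\Omega)}^2\lesssim \|A^{1/2}\nabla\uc\|_{L^2(\Omega)}^2$ (local stability of the coarse nodal expansion in the weighted norm — here one uses that $\psi_z$ is a $b$-orthogonal correction of $\lambda_z$ together with (QI2)); and (ii) control the Galerkin discrepancy $\|A^{1/2}\nabla(\tilde u_k-\uc_k)\|$ where $\tilde u_k:=\sum_z c_z\psi_{z,k}$ is the quasi-interpolant of $\uc$ into $\Vc_k$, via Céa-type / Strang-lemma estimates using $b(u_h-\uc_k,v)=0$ for $v\in\Vc_k$. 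Both parts produce terms of the form $e^{-ck}$ times a norm of $\uc$ or $u_h$; using $\|A^{1/2}\nabla\uc\|\lesssim\|A^{1/2}\nabla u_h\|\lesssim\alpha^{-1/2}\|g\|_{H^{-1}(\Omega)}$, rescaling to the energy norm (the $\sqrt{\ccont}$ factor appears because the sum over nodes and the $H^{-1}$ norm must be re-expressed in the $A$-weighted inner product), and collecting the overlap/scaling factors into the constant, one obtains the $\sqrt{\ccont}\,\alpha^{-1/2}H^{-1}e^{-ck}\|g\|_{H^{-1}(\Omega)}$ term. Finally, substituting $k\geq\tfrac{1}{2c}\log(\ccont)+\tfrac{2}{c}\log(1/H)$ makes $\sqrt{\ccont}\,H^{-1}e^{-ck}\leq H$, so that term is dominated by $\alpha^{-1/2}H\|g\|_{L^2(\Omega)}$ after bounding $\|g\|_{H^{-1}(\Omega)}\lesssim\|g\|_{L^2(\Omega)}$, which gives \eqref{e:energyerror}.

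The main obstacle is step (i)/(ii): converting the exponential decay of the \emph{global} correctors (Lemma~\ref{l:decay}) into a bound on the \emph{global} solution error $\|A^{1/2}\nabla(\uc-\uc_k)\|$ in a contrast-robust way. One must be careful that (a) the Galerkin solution on the truncated space is compared to the right object (an interpolant $\tilde u_k$ of $\uc$, not $\uc$ itself, since $\Vc_k\not\subset\Vc$), (b) the coefficient-expansion stability $\sum_z c_z^2\|A^{1/2}\nabla\lambda_z\|^2\lesssim\|A^{1/2}\nabla\uc\|^2$ genuinely holds with a contrast-independent constant — this is where the $b$-orthogonality of the $\psi_z$ and assumption (QI2) are essential and where a naive argument would reintroduce $\beta/\alpha$, and (c) the finitely-many-overlap argument for the patches $\omega_{z,k}$ contributes only a factor $k^{d}$ or so, which is harmless against $e^{-ck}$. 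Everything else is routine cut-off and Cauchy–Schwarz bookkeeping of the kind already carried out in the proof of Lemma~\ref{l:decay}.
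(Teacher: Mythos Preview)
Your proposal is correct and follows essentially the same route as the paper: the paper's own proof is just a pointer to \cite[Theorem~10]{2011arXiv1110.0692M}, with the contrast-dependent decay there replaced by Lemma~\ref{l:decay} and the unweighted $L^2$ approximation/stability replaced by the $A$-weighted version (QI3). Your triangle-inequality split, the per-basis-function truncation estimate via C\'ea plus cut-off, and the summation over nodes with finite overlap are exactly the ingredients of that argument.

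One point to sharpen: you assert in (b) that the nodal-expansion stability $\sum_z c_z^2\|A^{1/2}\nabla\lambda_z\|_{L^2(\Omega)}^2\lesssim\|A^{1/2}\nabla\uc\|_{L^2(\Omega)}^2$ holds with a contrast-independent constant, and separately attribute the $\sqrt{\beta/\alpha}$ factor to a vague ``rescaling''. The paper is explicit that the $\sqrt{\beta/\alpha}$ enters precisely through an \emph{inverse estimate}, which is essentially this coefficient-stability step (passing from the weighted energy of $\uc$ or of $\QI\uc$ to controls on the individual weighted norms $\|A^{1/2}\nabla\lambda_z\|$ requires comparing $A$-weighted and unweighted quantities on coarse elements, and that is where a factor of the local contrast appears). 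So your intuition that this is the delicate step is right, but your expectation that it is contrast-free is not; it is exactly the place where the $\sqrt{\beta/\alpha}$ is incurred, and the choice $k\gtrsim\tfrac{1}{2}\log(\beta/\alpha)$ then absorbs it.
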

\begin{proof}
The proof of \cite[Theorem~10]{2011arXiv1110.0692M} applies almost verbatim to the present setting. We simply replace the contrast-dependent decay of correctors in \cite{2011arXiv1110.0692M} by our sharper contrast-independent result from Lemma~\ref{l:decay}. Moreover, the proof of \cite{2011arXiv1110.0692M} involves several applications of the norm equivalences \eqref{e:norms} followed by $L^2$ approximation and stability properties. This leads to contrast-dependent constants in \cite{2011arXiv1110.0692M} but can be avoided here by using the approximation and stability in the $A$-weighted $L^2$ norm directly (QI3). However, the  contrast enters our proof via an inverse estimate and leads to the multiplicative constant $\sqrt{\beta/\alpha}$ in front of the exponentially decaying factor $e^{-ck}$. The proper choice of $k$ easily compensates this large constant.
\end{proof}

\begin{remark}[Improved estimates with modified localization]\label{rem:hminus1}
The modified localization of Section~\ref{ss:locmod} allows one to remove the unpleasant constant $H^{-1}$ in front of the exponentially decaying factor in Theorem~\ref{t:main} so that $k\geq \tfrac{1}{2c}\log(\ccont)+\tfrac{1}{c}\log (1/H)$ suffices to establish the error bound \eqref{e:energyerror}. 
\end{remark}

\section{Examples of quasi-interpolation operators}\label{s:qi}
In this section we recall old and introduce new interpolation
operators to be used in the framework presented in Section~\ref{s:abstract}. 

\subsection{$A$-independent quasi-interpolation}\label{ss:classicalQI}
Previous papers, such as \cite{2011arXiv1110.0692M,2012arXiv1211.3551H,2012arXiv1211.5954H}, usually considered a Cl\'ement-type (quasi-)interpolation operator $\QI: V\rightarrow V_H$ presented in \cite{MR1706735}. Given $v\in V$, define a (weighted) Cl\'ement interpolant
\begin{equation}\label{e:clement}
 \QI v := \sum_{z\in\N_H}(\QI v)(z)\lambda_z \quad\text{with nodal values}\;(\QI v)(z):=\frac{(v, \lambda_z)}{(1,\lambda_z)}\quad\text{for}\; z\in\N_H.
\end{equation}
Note that the fine-scale space $\Vf$ can then be characterized as $L^2$-orthogonal complement of $V_H$ in $V_h$. The operator $\QI$ does not depend on the coefficient $A$ and satisfies (local) approximation and stability properties only in unweighted norms \cite{MR1706735}. In particular, there exists a generic  constant $C$ depending only on the shape regularity of the finite element mesh $\triH$ such that for all $v\in V$ and for all $T\in\triH$ it holds
\begin{equation*}
 H_T^{-1}\|v-\QI v\|_{L^{2}(T)}+\|\nabla(v-\QI v)\|_{L^{2}(T)}\leq \Cint \| \nabla v\|_{L^2(\omega_T)}.
\end{equation*}
As shown in \cite{2011arXiv1110.0692M}, this property suffices to establish an optimal a priori error bound for the global version (cf. Definition~\ref{definition-full-vmm-no-truncation}) of the method
\begin{equation}\label{e:estglobal}
\|A^{1/2}\nabla (u - \uc)\|_{L^2(\Omega)}\lesssim \|A^{1/2}\nabla (u - u_h)\|_{L^2(\Omega)}+\alpha^{-1/2}\|H g\|_{L^2(\Omega)}.
\end{equation}
This error estimate does \emph{not} depend on the upper spectral bound $\beta$. Hence, the reliability and accuracy of the global version of the method does not suffer from high contrast. Despite its large computational complexity, the approach may be relevant for upscaling to very coarse meshes, where localization has anyway no effect. 

A further improvement in terms of accuracy (and, hence, the complexity to fall below a given error tolerance) can be achieved by substituting $\QI$ by the modified partition-of-unity-based Cl\'ement interpolation operator $\tilde{\mathcal I}_H$ presented in \cite{MR1736895}. Given $v\in V$, 
$$\tilde{\mathcal I}_H v:=\sum_{z\in\N_H} \frac{(\tilde{\lambda}_z,v)}{(\tilde{\lambda}_z,1)}\lambda_z,\quad\text{where}\quad \tilde\lambda_z(x):=\frac{\lambda_z(x)}{\sum_{z\in\N_H}\lambda_z(x)}.$$ 
Since the $\tilde\lambda_z$, $z\in\N_H$ form a partition of unity up to the boundary, the term $\alpha^{-1/2}\|H g\|_{L^2(\Omega)}$ in \eqref{e:estglobal} can be replaced by data oscillations
$$\biggl(\sum_{z\in\mathcal{N}}\|H (g-g_z)\|_{L^2(\omega_z)}^2\biggr)^{1/2}$$ with some weighted averages $g_z$ of $g$ on the nodal patch $\omega_z$, $z\in\N_H$; we refer to \cite[Section 2]{MR1736895} for details. Further smoothness of the right-hand side $g\in H^1(\Omega)$ then leads to quadratic convergence of the global method to the reference solution independent of contrast.

For both operators localization of the corresponding global basis is possible even for high-contrast coefficients. However, the theory strongly requires (QI3) and (QI4) to be satisfied with constants independent of the contrast and this is not the case in general. Although its performance in the numerical experiments of Section~\ref{s:numexp} is encouraging, the question whether or not the localized version of the method with these classical quasi-interpolation is reliable for high-contrast coefficients remains open.

\subsection{A new quasi-interpolation based on $A$-weighted $L^2$ spaces}\label{ss:qi}

This subsection suggests a new quasi-interpolation operator based on $A$-weighted averages. For this operator we will identify a class of coefficients (with possibly high contrast) that allows us to verify
the conditions (QI1)-(QI4) (see Sections~\ref{ss:characterize}--\ref{ss:verify} below). In particular, this
operator allows for contrast-independent constants $\Cint$ and
$\Cint'$  in (QI3) and (QI4), respectively.

The analysis is technical. To get the main
ideas across, we will only consider the case of scalar coefficients, 
i.e. $A = a I_d$ where $I_d$ is the $d\times d$ identity
matrix and $a \in L^\infty(\Omega)$, $\alpha \le a(x) \le \beta$, for 
almost all $x \in \Omega$. We will further assume that the coefficient
function $a(x)$ is piecewise constant with respect to $\tri_\varepsilon$, 
for some $h \le \varepsilon \le H$, i.e.~we assume that $a(x) = a_\tau$, for all 
$\tau \in \tri_\varepsilon$. Strictly speaking it 
is not necessary that the grids $\tri_\varepsilon$ and
$\tri_H$ are nested but it simplifies the presentation. We assume that 
$\tri_\varepsilon$ is obtained by uniform refinement from  $\tri_H$. Similarly,
$\tri_h$ is obtained by uniform refinement from $\tri_\varepsilon$, and thus
from  $\tri_H$. The extension to isotropic or mildly anisotropic tensor
coefficients and to coefficients that vary mildly (i.e. with benign
contrast but possibly rapidly) within each of the
elements $\tau \in \tri_\varepsilon$ is also straightforward 
(see \cite{PS_IMAJNA2012} for details).

The quasi-interpolation operator is now a coefficient-weighted
generalization of the Cl\'ement-type quasi-interpolation operator presented in Section~\ref{ss:classicalQI} above.
\begin{definition}[A-weighted quasi-interpolation]\label{d:qi}
Given $v \in V_h$, we define
\begin{equation}
\QI v  := \sum_{z \in \mathcal{N}_H} \QI v(z) \lambda_z, \quad \text{with} \quad
\QI v(z) := \frac{\int_{\Omega} a v \lambda_z
  \dx}{\int_{\Omega} a \lambda_z \dx}.
\end{equation}
\end{definition}
Note that the fine-scale space $\Vf$ can then be characterized as orthogonal complement of $V_H$ in $V_h$ with respect to the $A$-weighted $L^2$ scalar product. \subsection{Characterization of feasible high-contrast coefficients}\label{ss:characterize}
To satisfy conditions (QI1)-(QI4) for $\QI$ from Definition~\ref{d:qi} with constants independent of contrast,
we need to make a further assumption on the type of coefficient
distribution. To this end, for each vertex $z \in \mathcal{N}_H$, let $\omega_z=
\text{interior}(\text{supp}(\lambda_z))$, and set $\omega_T :=
\bigcup_{z \in \mathcal{N}_H\cap T} \omega_z$, for all  $T \in \mathcal{T}_H$.

\begin{assumption}
\label{a:quasimono}
We assume that there exists a generic constant $C_{\text{P}}$, independent of the contrast $\beta/\alpha$, such that one of the 
following two Poincar\'e-type inequalities holds for all 
$v \in V_h$ and for all $T \in \mathcal{T}_H$:
\begin{equation}
\label{wPoincare} 
\inf_{c \in \mathbb{R}} \int_{\omega_T} a (v-c)^2 \, \dx \ \lesssim \ C_{\text{P}} H_T^2 
\int_{\omega_T} a |\nabla v|^2 \, \dx,\vspace{-1ex}
\end{equation}
\begin{equation}
\label{wFriedrichs}
\partial \omega_T \cap \partial \Omega \not= \emptyset \quad \text{and} \quad 
\int_{\omega_T} a v^2 \, \dx \ \lesssim \ C_{\text{P}} H_T^2 \int_{\omega_T} a |\nabla v|^2 \, \dx \, .
\end{equation}
\end{assumption}

Since any function $v \in V_h$ is zero on the boundary $\partial
\Omega$, the existence of a constant is guaranteed for any 
strictly positive and uniformly bounded coefficient $a(x)$ by applying
the standard
Poincar\'e/Friedrichs inequality on each of the subregions $\omega_T$.
Whether $C_{\text{P}}$ is independent of the contrast $\beta/\alpha$
depends on the coefficient distribution. 

\subsection{On quasi-monotonicity}
To describe the link between the local
coefficient variation and the 
weighted Poincar\'e inequalities in Assumption
\ref{a:quasimono} in more detail, let 
us consider a generic coarse element $T \in \triH$.

We generalize now the notion 
of quasi-monotonicity coined in \cite{DrSaWi:96} by considering the
following three directed combinatorial graphs $\G_T^{(k)} =
(\N_T,\E_T^{(k)})$, $k=0,1,2$, where $\N_T = \{\tau \in \tri_\varepsilon: \tau
\subset \omega_T\}$ 
and the edges are ordered pairs of vertices. To define the edges we now 
distinguish between three different types of connections.
\begin{definition}
  Suppose that $\gamma^{\tau,\tau'}=\tau \cap \tau'$
  is a non-empty manifold of dimension $k$, for $k=0, 1,2$. The ordered pair
  $(\tau,\tau')$ is an edge in $\E_T^{(k)}$, if and only
  if $a_\tau\lesssim a_{\tau'}$.  The edges in $\E_T^{(k)}$ are said
  to be of \textbf{type-$k$}.
\end{definition}

Quasi-monotonicity is related to the connectivity in these graphs. Let 
$\tau^* = \text{argmax}_{\tau \in \N_T} a_\tau$, i.e. an element in
$\N_T$ where the maximum of $a(x)$ is attained on $\omega_T$. 
\begin{definition}
The coefficient $a$ is type-$k$ quasi-monotone on $\omega_T$, if there is a path in $\G_T^{(k)}$ from any vertex $\tau$ to $\tau^*$.
\end{definition}
Obviously $\E^{(2)} \subset \E^{(1)} \subset \E^{(0)}$, and so type-$k$ 
quasi-monotonicity implies type--$(k-1)$ quasi-monotonicity. The coefficients in 
Figure \ref{fig_coeffs}(a-c) are examples of quasi-monotone coefficients of Type 2, 1 
and 0, respectively. The coefficient in Figure \ref{fig_coeffs}(d) is not quasi-monotone.
\begin{figure}[t]
\begin{center}
\includegraphics[width=0.4\textwidth,angle=270]{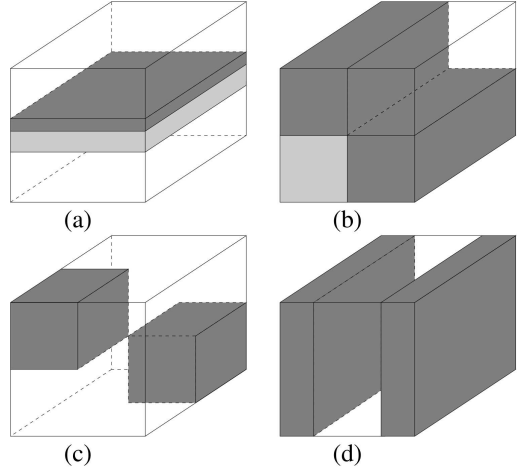}
\caption{\label{fig_coeffs}\small Quasi-monotone coefficient distributions of Type 2, 1 and 0
in (a-c), respectively. A darker color indicates a larger coefficient. A typical non 
quasi-monotone coefficient is shown in (d).}
\end{center}
\end{figure}

The following lemma summarizes the results in
\cite{PS_IMAJNA2012}. It relates the existence of a 
benign constant $C_P$ in Assumption \ref{a:quasimono}, that is independent 
of $\beta/\alpha$, directly to quasi-monotonicity, and the way in which 
$C_P$ depends on the ratio $H/h$ to the type of quasi-monotonicity.
\begin{lemma}
\label{lem_poincare}
If $a$ is type-$k$ quasi-monotone on $\omega_T$, for all $T \in \mathcal{T}_H$ and for some $0 \le k \le d-1$, then Assumption \ref{a:quasimono} holds with
\begin{equation}
C_P \; := \; \left\{ \begin{array}{ll}
1, & \text{if} \ \ k=d-1, \\
1+\log \left({\frac{H}{h}}\right), & \text{if} \ \ k=d-2, \\
{\frac{H}{h}}, & \text{if} \ \ k=0 \ \text{and} \ d=3.
\end{array}\right.
\label{lemma_eq3}
\end{equation}
\end{lemma}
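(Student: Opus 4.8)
The inequalities \eqref{wPoincare} and \eqref{wFriedrichs} are $a$-weighted Poincar\'e and Friedrichs inequalities on the (connected, but possibly elongated) union of patches $\omega_T$, and I would prove both by the same mechanism. First, split each $v\in V_h$ on every $\varepsilon$-cell $\tau\in\tri_\varepsilon$ with $\tau\subset\omega_T$ into its mean value $\bar v_\tau$ and the oscillation $v-\bar v_\tau$. Since $a$ is constant on $\tau$, the oscillation part is controlled contrast-free by the standard (unweighted) Poincar\'e inequality on the simplex $\tau$,
\[
\sum_{\tau\subset\omega_T}\int_\tau a\,(v-\bar v_\tau)^2\,\dx
\;\lesssim\;\varepsilon^2\sum_{\tau\subset\omega_T}\int_\tau a\,|\nabla v|^2\,\dx
\;\le\;H_T^2\int_{\omega_T}a\,|\nabla v|^2\,\dx ,
\]
contributing a term with constant $\lesssim 1$. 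The whole difficulty therefore lies in controlling $\sum_{\tau}a_\tau|\tau|(\bar v_\tau-c)^2$, where for \eqref{wPoincare} I use the freedom in $\inf_c$ to take $c=\bar v_{\tau^*}$ with $\tau^*=\mathrm{argmax}_\tau a_\tau$, while for \eqref{wFriedrichs} I take $c=0$ after first bounding the mean $\bar v_{\tau_b}$ on a cell $\tau_b$ touching $\partial\Omega$ by a Friedrichs inequality on $\tau_b$ alone (using $v|_{\partial\Omega}=0$).

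Here is where quasi-monotonicity enters. For each $\tau$ fix a path $\tau=\tau_0,\tau_1,\dots,\tau_m=\tau^*$ in $\G_T^{(k)}$, so that the coefficient is non-decreasing along the path and consecutive cells $\tau_j,\tau_{j+1}$ share a manifold $\gamma_j$ of dimension $k$. Telescoping, $\bar v_\tau-\bar v_{\tau^*}=\sum_j(\bar v_{\tau_j}-\bar v_{\tau_{j+1}})$, and each one-step difference is estimated by passing through the shared manifold, $|\bar v_{\tau_j}-\bar v_{\tau_{j+1}}|\le|\bar v_{\tau_j}-\bar v_{\gamma_j}|+|\bar v_{\gamma_j}-\bar v_{\tau_{j+1}}|$, bounding each term by a scaled trace inequality from the $\varepsilon$-cell onto $\gamma_j$ together with the local Poincar\'e inequality. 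The quality of this trace inequality is exactly what produces the three regimes: for a full facet ($k=d-1$) the scaled trace inequality holds with an absolute constant, giving $C_P\approx 1$; for a codimension-two manifold ($k=d-2$, e.g.\ an edge in 3D) tracing a $\trih$-piecewise affine function onto $\gamma_j$ requires the logarithmic factor of the classical ``edge lemma'', $1+\log(\varepsilon/h)\le 1+\log(H/h)$; and for an isolated vertex in 3D ($k=0$) one must invoke the finite-element ``vertex lemma'', whose constant is $\lesssim \varepsilon/h\le H/h$. The coefficient weights are then absorbed into $\int_{\omega_T}a|\nabla v|^2$ via a weighted Cauchy--Schwarz inequality along the path with the cell coefficients as weights, where the monotonicity $a_{\tau_0}\le a_{\tau_1}\le\cdots$ is used to cancel the prefactor $a_\tau$.

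The main obstacle is the bookkeeping in this last summation: done naively, chaining through $\mathcal O(H/\varepsilon)$ cells per path and reordering the double sum introduces spurious powers of $H/\varepsilon$ (hence of the number of subdomains). Avoiding this requires organising the cells by coefficient level and exploiting that every super-level set $\{a\gtrsim t\}$ is connected with diameter $\le H_T$ — which is precisely what quasi-monotonicity buys — so that an unweighted Poincar\'e inequality on those connected (possibly thin and winding) sets can be used, and the final constant depends only on the trace constants above and hence only on $H/h$ in the stated way, never on $\beta/\alpha$. This careful analysis, together with the proofs of the requisite finite-element trace inequalities on $k$-manifolds with the stated constants, is exactly the content of \cite{PS_IMAJNA2012} (building on the notion of quasi-monotonicity introduced in \cite{DrSaWi:96}), to which I would refer for the full details; the Friedrichs case \eqref{wFriedrichs} follows along the same lines, additionally chaining from $\tau^*$ to the boundary cell $\tau_b$ along a quasi-monotone path.
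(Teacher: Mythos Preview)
The paper does not actually supply a proof of this lemma; it is stated purely as a summary of the results in \cite{PS_IMAJNA2012}, with no argument given beyond that citation. Your proposal therefore cannot be compared against a proof in the paper itself, only against the reference.

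That said, your sketch is faithful to the strategy of \cite{PS_IMAJNA2012}: the decomposition into cellwise means plus oscillations, the telescoping along quasi-monotone paths through the shared $k$-manifolds, and the identification of the three regimes with the classical finite-element face, edge and vertex trace lemmas are precisely the ingredients used there. You also correctly flag the main technical difficulty, namely that a naive double sum over paths and cells picks up spurious powers of $H/\varepsilon$, and that avoiding this is where the real work lies.

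One point deserves more care. For the Friedrichs variant \eqref{wFriedrichs} you propose to chain ``from $\tau^*$ to the boundary cell $\tau_b$ along a quasi-monotone path''. Quasi-monotonicity, as defined here, only guarantees monotone paths \emph{towards} $\tau^*$, not away from it, so the coefficient along a path from $\tau^*$ to $\tau_b$ would in general be decreasing and the weight-absorption step would fail. In \cite{PS_IMAJNA2012} the boundary case is handled by an additional structural assumption (essentially that the maximum is attained near the boundary, or an appropriate boundary-adapted notion of quasi-monotonicity), not by reversing the path. Apart from this, your outline matches the cited reference, and since the paper itself defers entirely to that reference, your proposal is at least as detailed as what the paper provides.
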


Quasi--monotonicity is a necessary condition. If the coefficient is 
not quasi-monotone, e.g. the situation in Figure \ref{fig_coeffs}(d), 
then there exists a $T \in \mathcal{T}_H$ and $\tau,\ \tau' \in 
\mathcal{N}_T$ with $a_\tau > a_{\tau'}$, such that 
$C_P \ge a_\tau/a_{\tau'}$ (cf. \cite[Prop.~2.11]{PS_IMAJNA2012}).

The coefficient $C_{\text{P}}$ will in general depend on the geometry 
and topology of the coefficient variation. In particular, it depends on the ratio 
$H/\varepsilon$. Restricting ourselves to type--$(d-1)$ quasi-monotone 
coefficients, it is shown in \cite[Section 4]{PS_IMAJNA2012} that 
\begin{equation}
C_P \; \ge \; \left\{ \begin{array}{ll}
1, & \text{if} \ \ d=1, \\
1+\log \left({\frac{H}{\varepsilon}}\right), & \text{if} \ \ d=2, \\
{\frac{H}{\varepsilon}}, & \text{if} \ \ d=3.
\end{array}\right.
\label{lemma_eq3}
\end{equation}
The bounds are sharp and they are attained when $a_\tau \ll a_{\tau^*}$, for all $\tau \in \tri_\varepsilon$ such that $\tau \subset \omega_T$ and $\tau \not= \tau^*$, i.e. when the coefficient is high in only one element $\tau^*\in \tri_\varepsilon$ on $\omega_T$. 

\subsection{Verification of (QI1)-(QI4) for $A$-weighted quasi-interpolation}\label{ss:verify}

To verify conditions (QI1)-(QI4) for the $A$-weighted quasi-interpolation operator $\QI$ in Definition~\ref{a:quasimono}, we need the following two technical lemmas.
For the remainder of this section we assume that $\beta/\alpha \gg H/\varepsilon$, i.e. we consider high-contrast coefficients that do not vary too rapidly relative to the coarse mesh size $H$.
\begin{lemma}[weighted inverse--type estimates]
Let $T \in \tri_H$ and $v_H \in V_H$. Then 
\begin{align}
\label{def:Cstar}
\|v_H\|_{L^\infty(T)} & \le C_{\text{inv},1} \left(\int_{T} a \, \dx\right)^{-1} \int_{T} a |v_H| \, \dx \,,\\
\label{def:Cstarstar}
\|v_H\|_{L^\infty(T)} & \le C_{\text{inv},2} \left(\int_{T} a \, \dx\right)^{-1/2} \|a^{1/2} v_H \|_{L^2(T)}\,,
\end{align} 
with constants $C_{\text{inv},1} \eqsim C_{\text{inv},2} = \mathcal{O}(H/\varepsilon)$ that are independent of the contrast $\beta/\alpha$.
\end{lemma}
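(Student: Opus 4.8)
The plan is to prove both estimates \eqref{def:Cstar} and \eqref{def:Cstarstar} by reducing them to scaling arguments on the reference simplex combined with the fact that $v_H$ is affine on $T$, so that $\|v_H\|_{L^\infty(T)}$, the weighted $L^1$-average $\bigl(\int_T a\bigr)^{-1}\int_T a|v_H|$, and the weighted $L^2$-average $\bigl(\int_T a\bigr)^{-1/2}\|a^{1/2}v_H\|_{L^2(T)}$ all become norms on the finite-dimensional space of affine functions. The first step is to observe that on a fixed simplex all three quantities are equivalent as norms; the only issue is tracking how the equivalence constant depends on the weight $a$, which on $T$ is $\tri_\varepsilon$-piecewise constant taking values in $[\alpha,\beta]$, and hence how it depends on $H/\varepsilon$ rather than on $\beta/\alpha$.

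\medskip
Next I would make the weight dependence explicit. Write $T$ as the union of the subsimplices $\tau\in\tri_\varepsilon$ with $\tau\subset T$; there are $\mathcal{O}((H/\varepsilon)^d)$ of them, all shape-regular and of diameter $\approx\varepsilon$, and $a\vert_\tau=a_\tau$. For the $L^1$ estimate \eqref{def:Cstar}, since $v_H$ is affine it attains its $L^\infty(T)$ norm at a vertex $y$ of $T$, and $y$ lies in the closure of some subsimplex $\tau_y\subset T$. On that single subsimplex an elementary (weight-free, because $a$ is constant there) inverse estimate for affine functions gives $\|v_H\|_{L^\infty(\tau_y)}\lesssim \varepsilon^{-d}\int_{\tau_y}|v_H|\,\dx = (a_{\tau_y}\varepsilon^d)^{-1}\int_{\tau_y} a|v_H|\,\dx$, where the implied constant depends only on shape regularity. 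It remains to compare $\int_T a\,\dx$ with $a_{\tau_y}\varepsilon^d$: trivially $\int_T a\,\dx \le \beta H^d$, but this introduces the contrast. To avoid that, I would instead use that $\|v_H\|_{L^\infty(T)}=|v_H(y)|$ together with the trivial bound $\int_{\tau_y} a|v_H|\,\dx \le \int_T a|v_H|\,\dx$, and bound $\int_T a\,\dx$ from above using $a\le \beta$ only in the form $\int_T a\,\dx \le \|a\|_{L^\infty(T)}|T| $ — but this again leaks $\beta/\alpha$. The correct route, which is the heart of the matter, is the reverse: we need $\bigl(\int_T a\bigr)^{-1}\int_T a|v_H| \gtrsim (C_{\mathrm{inv},1})^{-1}\|v_H\|_{L^\infty(T)}$, i.e. a \emph{lower} bound on the weighted average of $|v_H|$, and this cannot leak $\beta/\alpha$ because the contrast cancels between numerator and denominator up to the geometric factor coming from how much of the "mass" $\int_T a$ can concentrate away from where $|v_H|$ is large.

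\medskip
So the key estimate I would isolate is: for any affine $v_H$ on $T$,
\[
  \frac{\int_T a |v_H|\,\dx}{\int_T a\,\dx}\ \gtrsim\ \Bigl(\tfrac{\varepsilon}{H}\Bigr)^{d}\,\|v_H\|_{L^\infty(T)},
\]
with a constant depending only on $d$ and shape regularity. This follows because, with $y$ a vertex where $|v_H|$ is maximal, there is a subsimplex $\tau^\sharp\subset T$ of diameter $\approx H$ (a union of $\mathcal{O}(1)$ copies of the $\varepsilon$-cells, or simply a fixed fraction of $T$ near $y$) on which $|v_H|\gtrsim \|v_H\|_{L^\infty(T)}$ — using that an affine function is at least half its maximum on a fixed-size neighbourhood of the maximizing vertex; then $\int_T a|v_H|\,\dx \ge \int_{\tau^\sharp} a|v_H|\,\dx \gtrsim \|v_H\|_{L^\infty(T)}\int_{\tau^\sharp} a\,\dx$, and finally $\int_{\tau^\sharp} a\,\dx$ is at least a $(\varepsilon/H)^d$-fraction of $\int_T a\,\dx$ — the worst case being precisely the one quoted after \eqref{lemma_eq3}, where $a$ is huge on a single $\varepsilon$-cell $\tau^*$; if that cell lies outside $\tau^\sharp$ we get the stated $(\varepsilon/H)^d$ loss, and this is sharp. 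This yields $C_{\mathrm{inv},1}=\mathcal{O}((H/\varepsilon)^d)$, though a more careful argument, bounding the number of $\varepsilon$-cells crossed rather than their volume, should give $\mathcal{O}(H/\varepsilon)$; I would carry out the sharper count to match the stated $\mathcal{O}(H/\varepsilon)$. Then \eqref{def:Cstarstar} follows from \eqref{def:Cstar} by Cauchy--Schwarz, $\int_T a|v_H|\,\dx \le \bigl(\int_T a\,\dx\bigr)^{1/2}\|a^{1/2}v_H\|_{L^2(T)}$, giving $C_{\mathrm{inv},2}\eqsim C_{\mathrm{inv},1}$; conversely one checks the reverse comparison $\|a^{1/2}v_H\|_{L^2(T)}^2 \le \|v_H\|_{L^\infty(T)}\int_T a|v_H|\,\dx$ to confirm the equivalence. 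The main obstacle is exactly the concentration phenomenon in the lower bound on $\int_T a|v_H|\,\dx/\int_T a\,\dx$: making the geometric loss exactly $\mathcal{O}(H/\varepsilon)$ rather than the naive $\mathcal{O}((H/\varepsilon)^d)$ requires replacing the crude volume comparison of weighted masses by a one-dimensional slicing argument along the line where $|v_H|$ grows, counting only the $\mathcal{O}(H/\varepsilon)$ cells met by that line.
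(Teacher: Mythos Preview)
Your argument has a genuine gap at the step where you claim that $\int_{\tau^\sharp} a\,\dx$ is at least a $(\varepsilon/H)^d$-fraction of $\int_T a\,\dx$, independently of the contrast. This is false. Take your own worst case: $a=\beta$ on a single $\varepsilon$-cell $\tau^*\not\subset\tau^\sharp$ and $a=\alpha$ elsewhere. Then $\int_{\tau^\sharp} a\approx \alpha|\tau^\sharp|\approx \alpha H^d$ while $\int_T a\approx \beta\varepsilon^d+\alpha H^d$. For $\beta/\alpha\gg (H/\varepsilon)^d$ the ratio is $\approx(\alpha/\beta)(H/\varepsilon)^d$, which tends to zero as $\beta/\alpha\to\infty$. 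So your bound leaks the contrast after all, and neither the proposed slicing fix nor a sharper cell count can repair this, because the problem is not the exponent on $H/\varepsilon$ but the fact that any comparison of $\int_{\tau^\sharp} a$ with $\int_T a$ is inherently contrast-dependent once the high cell can sit outside $\tau^\sharp$.

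The paper avoids this entirely by not localising to a region where $|v_H|$ is large. Instead it normalises $\widehat v_H:=v_H/\|v_H\|_{L^\infty(T)}$ and uses the elementary geometric fact that for an affine function with $\|\widehat v_H\|_{L^\infty(T)}=1$ one has, on \emph{every} sub-cell $\tau\in\tri_\varepsilon$ with $\tau\subset T$,
\[
\int_\tau |\widehat v_H|\,\dx\ \gtrsim\ \frac{\varepsilon}{H}\,|\tau|
\qquad\text{and}\qquad
\int_\tau \widehat v_H^{\,2}\,\dx\ \gtrsim\ \frac{\varepsilon^2}{H^2}\,|\tau|,
\]
with constants depending only on shape regularity. (This holds because $|\nabla\widehat v_H|\lesssim 1/H$; either $|\widehat v_H|\gtrsim \varepsilon/H$ throughout $\tau$, or $\widehat v_H$ is near its zero set on $\tau$, in which case $|\nabla\widehat v_H|\gtrsim 1/H$ and the affine variation across $\tau$ gives the bound.) Multiplying by the constant $a_\tau$ and summing over all $\tau\subset T$ then gives
\[
\int_T a|\widehat v_H|\,\dx\ \gtrsim\ \frac{\varepsilon}{H}\int_T a\,\dx,
\qquad
\int_T a\,\widehat v_H^{\,2}\,\dx\ \gtrsim\ \frac{\varepsilon^2}{H^2}\int_T a\,\dx,
\]
which are exactly \eqref{def:Cstar} and \eqref{def:Cstarstar}. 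The contrast never appears because the same weight $a_\tau$ multiplies both sides \emph{cell by cell}; your intuition that ``the contrast cancels between numerator and denominator'' is correct, but the cancellation must be enforced locally on each $\tau$, not through a global mass comparison between $\tau^\sharp$ and $T$.
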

\begin{proof}
Let $\|v_H\|_{L^\infty(T)} > 0$; otherwise the results are trivial. Now, set $\widehat{v}_H := \|v_H\|^{-1}_{L^\infty(T)} v_H$. Since $\widehat{v}_H$ is linear on $T$ and equal to $1$ at least at one of the vertices of $T$, it follows as for classical inverse estimates via simple geometric arguments that
\[
\int_\tau |\widehat{v}_H| \, \dx \gtrsim \frac{\varepsilon}{H} |\tau| \quad \text{and} \quad \int_\tau \widehat{v}_H^2 \, \dx \gtrsim \frac{\varepsilon^2}{H^2} |\tau|\,, \quad \text{for all} \ \ \tau \in \tri_\varepsilon\, \ \tau \subset T.
\] 
The implied constants depend only on the dimension $d$ and are independent of the coefficient or of
any geometric parameters. 

Multiplying each of these inequalities by $a_\tau$ and summing over all $\tau \in \tri_\varepsilon$ with $\tau \subset T$ we get
\[
\int_{T} a |\widehat{v}_H| \, d\text{x}  \gtrsim \frac{\varepsilon}{H} \int_{T} a \, \dx \quad \text{and} \quad
\int_{T} a \widehat{v}_H^2 \, d\text{x} \gtrsim \frac{\varepsilon^2}{H^2} \int_{T} a \, \dx\,,
\]
which implies the two inequalities \eqref{def:Cstar} and \eqref{def:Cstarstar}.
\end{proof}

\begin{lemma}
\label{lem:QI4basis}
Let Assumption~\ref{a:quasimono} hold and let $h < \varepsilon$ be sufficiently small. Then, for every $z \in \mathcal{N}_H$, there exists a function $\eta_z \in V_h$ such that $\text{supp}(\eta_z) \subset \omega_z$\,, $\QI \eta_z = \lambda_z 
$ and
\begin{equation}
\label{QI4basis}
\|a^{1/2} \nabla \eta_z\|_{L^2(T)} \le C_{\text{base}}\|a^{1/2} \nabla
\lambda_z\|_{L^2(T)}\,,\ \ \text{for all} \ T \in \tri_H
\end{equation}
with a constant $C_{\text{base}} =\mathcal{O}(H^2/\varepsilon^2)$
that is independent of the contrast $\beta/\alpha$.
\end{lemma}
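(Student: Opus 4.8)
The plan is to construct $\eta_z$ explicitly as a local correction of $\lambda_z$ that restores the interpolation identity $\QI\eta_z = \lambda_z$ while keeping the support inside $\omega_z$ and the weighted gradient controlled. First I would note that $\QI\lambda_z$ is \emph{not} in general equal to $\lambda_z$ (the $A$-weighted Cl\'ement operator is not a projection), but since $\support\lambda_z = \omega_z$ and $\QI$ acts locally, the nodal values $(\QI\lambda_z)(y)$ vanish for all vertices $y$ whose patch $\omega_y$ does not meet $\omega_z$; in fact $\QI\lambda_z = \sum_{y} c_{zy}\lambda_y$ with the sum running over the (uniformly bounded number of) neighbouring vertices $y$ of $z$, and the diagonal coefficient $c_{zz} = \frac{\int a\lambda_z^2}{\int a\lambda_z}$ is bounded below by a constant times $\varepsilon/H$ using the inverse-type estimate \eqref{def:Cstar} (apply it to $v_H = \lambda_z$ on each $T\subset\omega_z$ and sum). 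This shows the small linear system expressing $\QI$ restricted to $\sspan\{\lambda_y : y\sim z\}$ is diagonally dominant with the relevant entries comparable to $\varepsilon/H$, hence invertible with inverse bounded by $\O(H/\varepsilon)$ entrywise.

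Second, I would build a local ``bubble'' in $V_h$ supported in a single fine element (or a small cluster) that $\QI$ maps, up to a small controllable perturbation, onto a given coarse hat function, so that I can surgically adjust nodal values of $\QI$ of my candidate without enlarging supports. Concretely, for each neighbour $y$ of $z$ pick a fine-mesh bubble $b_y \in V_h$ with $\support b_y$ a tiny neighbourhood of $y$ contained in $\omega_z\cap\omega_y$ and with $\QI b_y(y) \eqsim 1$ (again via \eqref{def:Cstar}, which forces a fine function that is order-one somewhere near $y$ to have $\QI$-value there of order $\varepsilon/H$ unless we scale it up by $H/\varepsilon$); its gradient, being a $\trih$-bubble scaled by $H/\varepsilon$, has $\|a^{1/2}\nabla b_y\|_{L^2}$ of order $(H/\varepsilon)\cdot h^{-1}(\text{something})$ — and here the assumption $h<\varepsilon$ sufficiently small, together with the fact that the bubble sits in a region where $a$ is essentially $a_{\tau(y)}$, lets me compare this with $\|a^{1/2}\nabla\lambda_z\|_{L^2}$, which on the relevant element is $\eqsim a_{\tau}^{1/2}H^{d/2-1}$. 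Taking a linear combination $\eta_z := (H/\varepsilon)$-scaled correction $\sum_y d_y b_y + (\text{main part})$, where the coefficients $d_y = \O(H/\varepsilon)$ are obtained by inverting the small diagonally dominant system above so that $\QI\eta_z$ has the correct nodal values $\delta_{zy}$, produces $\QI\eta_z = \lambda_z$ with $\support\eta_z\subset\omega_z$ and, by the triangle inequality over the $\O(1)$ neighbours, $\|a^{1/2}\nabla\eta_z\|_{L^2(T)} \lesssim (H/\varepsilon)^2 \|a^{1/2}\nabla\lambda_z\|_{L^2(T)}$, which is the claimed $C_{\text{base}}=\O(H^2/\varepsilon^2)$.

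The delicate point — and the place where Assumption~\ref{a:quasimono} (quasi-monotonicity) actually enters — is \emph{not} the interpolation identity, which is pure linear algebra, but the weighted gradient bound \eqref{QI4basis} \emph{elementwise in $T$}: on elements $T$ where $\lambda_z$ has a genuine gradient but the bubble corrections $b_y$ happen to live on fine elements $\tau$ carrying a much larger coefficient $a_\tau$, the naive bound would pick up a factor $a_\tau/a_{\tau(T)}$, i.e.\ the contrast. To avoid this I would choose the support of each bubble $b_y$ to lie on a fine element $\tau$ where $a_\tau$ is \emph{comparable to} the values of $a$ that $\lambda_z$ already ``sees'' on $\omega_z$ — this is possible precisely because quasi-monotonicity guarantees a path of comparable-or-increasing coefficients connecting any element of $\omega_T$ to the maximiser $\tau^*$, so I can route the correction through elements along such a path without ever dividing by the contrast; the weighted Poincar\'e inequality \eqref{wPoincare}/\eqref{wFriedrichs} then supplies the stability estimate that ties $\|a^{1/2}(b_y - \text{const})\|$ to $\|a^{1/2}\nabla b_y\|$ with a contrast-independent constant $C_{\text P}$. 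I expect this routing argument to be the main obstacle: one must verify that the combined support $\bigcup_y \support b_y$ stays inside $\omega_z$, that the inductive construction along quasi-monotone paths does not accumulate more than $\O(1)$ many steps with constants, and that the elementwise (rather than global) nature of \eqref{QI4basis} survives the summation — but none of this needs the contrast, only $H/\varepsilon$ and $C_{\text P}$, exactly as in the $A$-independent analysis of \cite{PS_IMAJNA2012}.
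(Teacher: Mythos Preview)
Your approach differs substantially from the paper's. The paper does not attempt a general construction via bubble corrections and a local linear system; instead it treats only $d=1$ and the specific ``worst case'' two-phase coefficient $a=\beta$ on an interval $\omega^*=[y-\varepsilon,y+\varepsilon]$ and $a=1$ elsewhere. There $\eta_z$ is built \emph{explicitly} as a continuous piecewise-linear function on $[0,y,1]$ (breakpoint at the centre of the inclusion, not a fine-scale bubble), the two free nodal values $b_1,b_2$ are determined by the two moment conditions $\int_T a\lambda_z\eta_z=\int_T a\lambda_z$ and $\int_T a\lambda_{z'}\eta_z=0$, and one verifies by direct calculation that $b_1=\mathcal O(\varepsilon^{-1})$, $(b_2-b_1)/(1-y)=\mathcal O(\varepsilon^{-2})$, hence $C_{\mathrm{base}}=\mathcal O(H^2/\varepsilon^2)$. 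No routing along quasi-monotone paths is used.

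Your proposal has two concrete gaps. First, the local matrix is \emph{not} diagonally dominant with $\mathcal O(H/\varepsilon)$ inverse: in the paper's 1D setting one computes $\int a\lambda_z^2/\int a\lambda_z\approx y$, $\int a\lambda_z\lambda_{z'}/\int a\lambda_z\approx 1-y$, and likewise for the row of $z'$, so the leading $2\times 2$ block is
\[
\begin{pmatrix} y & 1-y\\ y & 1-y\end{pmatrix}+\mathcal O(\varepsilon^2),
\]
which has determinant $\mathcal O(\varepsilon^2)$ and inverse of order $(H/\varepsilon)^2$, not $H/\varepsilon$. This is exactly where the factor $(H/\varepsilon)^2$ in $C_{\mathrm{base}}$ originates; your accounting loses one power. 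Second, fine-scale bubbles are the wrong ansatz: a $\tri_h$-bubble has gradient $\sim h^{-1}$, so its weighted energy blows up as $h\to 0$, and the phrase ``$h<\varepsilon$ sufficiently small\ldots lets me compare'' goes in the wrong direction. Even an $\varepsilon$-scale bubble combined with the correct $(H/\varepsilon)^2$ inverse bound produces $C_{\mathrm{base}}=\mathcal O((H/\varepsilon)^3)$, strictly worse than the claim. The paper avoids this by letting $\eta_z$ vary over pieces of length $\sim H$ (not $\varepsilon$ or $h$), so its gradient stays $\mathcal O(H^{-1}(H/\varepsilon)^2)$ and the extra factor never appears.
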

\begin{proof}
We will only give a complete proof for the case $d=1$. The proof in higher dimensions is very technical and not instructive. We will prove the result by explicitly constructing a suitable piecewise linear function $\eta_z$ that satisfies the required bound. It suffices to work elementwise. 

To simplify the presentation we focus on the particular case where $a|_{\omega^*} \equiv \beta$, for some interval $\omega^* \subset T$ with diameter $\text{diam}(\omega^*) = 2\varepsilon$, and $a(x) = 1$ otherwise (see Figure \ref{fig_basis}). This represents in some sense the worst case scenario. Without loss of generality, we work on the reference element $\widehat{T} = [0,1]$, i.e. $H=1$.
\begin{figure}[t]
\begin{center}
\includegraphics[width=0.3\textwidth]{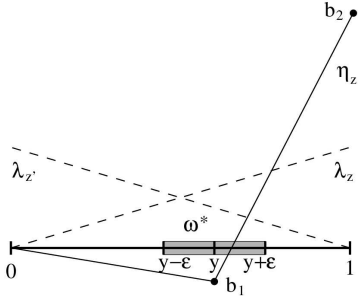}
\caption{\label{fig_basis} Construction of a suitable function $\eta_z$ for Lemma \ref{lem:QI4basis} in one dimension.}
\end{center}
\end{figure}
Let $y$ be the centre of $\omega^*$ which by assumption is a vertex of $\mathcal{T}_h$ and let $\eta_z \in V_h$ be the piecewise linear function with respect to $\{z'=0,y,z=1\}$ that is 0 at $x=0$, $b_1$ at $x=y$ and $b_2$ at $x=1$, as depicted in Figure \ref{fig_basis}. 

Imagining a similar construction in the adjoining element containing $z$, it is easy to see that $\text{supp}(\eta_z) \subset \omega_z$. 
The values of $b_1$ and $b_2$ are chosen such as to ensure that $\QI \eta_z = \lambda_z$. Since
\[
0 = \int_0^1 a \lambda_{z'} \eta_z \dx = \int_0^1 a (1-\lambda_{z}) \eta_z \dx =
\int_0^1 a \eta_z \dx - \int_0^1 a \lambda_{z} \eta_z \dx\,,
\]
this is equivalent to
\begin{equation}
\label{lem43:conditions}
\int_0^1 a \lambda_z = \int_0^1 a \lambda_z \eta_z \dx = \int_0^1 a \eta_z \dx \,.
\end{equation}

An elementary calculation shows that
\begin{align*}
\frac{1}{\beta} \int_0^1 a \lambda_z \dx & = \int_{y-\varepsilon}^{y+\varepsilon} x \dx + \mathcal{O}(\beta^{-1}) = 2y\varepsilon + \mathcal{O}(\beta^{-1}) \\
\frac{1}{\beta} \int_0^1 a \eta_z \dx & = \int_{y-\varepsilon}^{y} \frac{b_1x}{y} \dx 
+ \int_{y}^{y + \varepsilon} \left(b_1\frac{1-x}{1-y} + b_2 \frac{x-y}{1-y} \right) \dx  + \mathcal{O}(\beta^{-1}) \\ & = \frac{\varepsilon}{2} \left[ \frac{4y(1-y) - \varepsilon}{y (1-y)} \, b_1 + \frac{\varepsilon}{1-y} \, b_2 \right] + \mathcal{O}(\beta^{-1})\\
\frac{1}{\beta} \int_0^1 a \lambda_z \eta_z\dx & = \int_{y-\varepsilon}^{y} \frac{b_1x^2}{y} \dx + \int_{y}^{y + \varepsilon} x \left(b_1\frac{1-x}{1-y} + b_2 \frac{x-y}{1-y} \right) \dx  + \mathcal{O}(\beta^{-1}) \\ & = \frac{\varepsilon}{6} \left[ \frac{12y^2(1-y) + (1-2y)2\varepsilon^2 - 3y\varepsilon}{y(1-y)}\, b_1 + \frac{\varepsilon (3y + 2 \varepsilon)}{1-y} \, b_2 \right] + \mathcal{O}(\beta^{-1})
\end{align*}
Due to \eqref{lem43:conditions}, all these expressions need to be equal. We equate the first expression with each of the others and neglect terms of $\mathcal{O}(\beta^{-1})$ (which is justified since we assumed $\beta^{-1} \ll \varepsilon$):
\begin{align}
\label{lem43:first}
\Big(4y(1-y) - \varepsilon\Big) \, b_1 + y \varepsilon \, b_2  &= 4y^2(1-y)\,,\\
\label{lem43:second}
\Big(12y^2(1-y) + (1-2y)2\varepsilon^2 - 3y\varepsilon\Big) \, b_1 + y \varepsilon (3y + 2 \varepsilon) \, b_2 &= 12y^2(1-y)\,.
\end{align}
This uniquely defines $b_1$ and $b_2$ and we see that both values are independent of the contrast~$\beta$. Subtracting $(3y + 2 \varepsilon)$ times  \eqref{lem43:first} from \eqref{lem43:second} and solving for $b_1$ we get
\[
b_1 = - \frac{y^2 (3 - 3y - 2 \varepsilon)}{\varepsilon(2 y - \varepsilon)}
\]
Substituting this back into \eqref{lem43:first} we can get an expression for $b_2$. 

To finish the proof we need to establish \eqref{QI4basis} and show that $C_{\text{base}} =\mathcal{O}(\varepsilon^{-2})$ independent of the contrast $\beta$. Since
\begin{align}
\frac{1}{\varepsilon \beta} \| a^{1/2} \nabla \lambda_z \|^2_{L^2(\widehat{T})} & = \frac{1}{\varepsilon} \int_{y-\varepsilon}^{y + \varepsilon} 1^2 \dx + \mathcal{O}\left(\frac{1}{\beta\varepsilon}\right) = 2 + \mathcal{O}\left(\frac{1}{\beta\varepsilon}\right) \quad \text{and} \nonumber \\
\frac{1}{\varepsilon \beta} \| a^{1/2} \nabla \eta_z \|^2_{L^2(\widehat{T})} & = \frac{1}{\varepsilon} \int_{y-\varepsilon}^{y} \left(\frac{b_1}{y}\right)^2 \dx +\frac{1}{\varepsilon} \int_y^{y+\varepsilon} \left(\frac{b_2-b_1}{1-y}\right)^2 \dx + \mathcal{O}\left(\frac{1}{\beta\varepsilon}\right) \nonumber \\
\label{lem43:bound}
& = \left(\frac{b_1}{y}\right)^2 + \left(\frac{b_2-b_1}{1-y}\right)^2 + \mathcal{O}\left(\frac{1}{\beta\varepsilon}\right)\,,
\end{align}
it suffices to prove that the expression in \eqref{lem43:bound} is $\mathcal{O}(\varepsilon^{-4})$ independent of $\beta$. 

First, it is easy to verify that $-b_1/y$ takes its maximum at $y=\varepsilon$ with a value of $(3-5\varepsilon)/\varepsilon$. Also, it follows from \eqref{lem43:first} that
\[
\frac{b_2-b_1}{1-y} = \frac{4}{\varepsilon} (y-b_1) + \frac{b_1}{y} \le \frac{4}{\varepsilon} (y-b_1) \le \frac{12 y^2 (1 - y)}{\varepsilon^2(2 y - \varepsilon)}
\le \frac{3}{\varepsilon^2}
\]
which completes the proof.
\end{proof}

\begin{proposition}
\label{prop:assumptions}
Under Assumption~\ref{a:quasimono}, the operator $\QI$ from Definition~\ref{a:quasimono} satisfies the conditions (QI1)-(QI4) with constants $\Cint$ and $\Cint'$ 
independent of the contrast $\beta/\alpha$, but depending in general on $H/\varepsilon$.
\end{proposition}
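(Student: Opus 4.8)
The plan is to verify the four items of Assumption~\ref{a:QI} one at a time for the operator $\QI$ of Definition~\ref{d:qi}, tracking the dependence of the constants on $H/\varepsilon$ and on the constant $C_{\text{P}}$ of Assumption~\ref{a:quasimono}, but never on $\beta/\alpha$. Items (QI1) and (QI2) are essentially free. Linearity of $\QI$ is clear from Definition~\ref{d:qi}, and since $\QI$ maps between the finite-dimensional spaces $V_h$ and $V_H$ it is automatically continuous; this is (QI1). For (QI2), finite dimensionality reduces the claim to injectivity of $\QI|_{V_H}$. Writing $v_H=\sum_{y\in\N_H}c_y\lambda_y$, the identity $\QI v_H=0$ is equivalent to $\sum_y c_y\int_\Omega a\,\lambda_y\lambda_z\dx=0$ for every $z\in\N_H$, i.e.\ $M\mathbf{c}=0$, where $M$ is the $A$-weighted mass matrix on $V_H$. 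Since $a>0$ almost everywhere, $\mathbf{c}^{\top}M\mathbf{c}=\int_\Omega a\,v_H^2\dx>0$ unless $v_H=0$, so $M$ is positive definite, $\mathbf{c}=0$, and $v_H=0$; no contrast enters either argument.

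Condition (QI3) is the heart of the matter and would be obtained by a coefficient-weighted Cl\'ement argument. Fix $T\in\triH$ and $v_h\in V_h$. Two elementary facts do the work. First, for each $z\in\N_H\cap T$, the Cauchy--Schwarz inequality together with $0\le\lambda_z^2\le\lambda_z$ gives, for any $w\in L^2(\omega_z)$,
\[
\Bigl\| a^{1/2}\,\tfrac{\int_\Omega a\,w\,\lambda_z\dx}{\int_\Omega a\,\lambda_z\dx}\,\lambda_z\Bigr\|_{L^2(\omega_z)}\ \le\ \|a^{1/2}w\|_{L^2(\omega_z)}.
\]
Second, applying the weighted inverse-type estimate \eqref{def:Cstar} to $\lambda_z$ shows $\int_T a\lambda_z\dx\gtrsim(\varepsilon/H)\int_T a\dx$, which with $\|\nabla\lambda_z\|_{L^\infty(T)}\lesssim H_T^{-1}$ and Cauchy--Schwarz yields
\[
\Bigl|\tfrac{\int_\Omega a\,w\,\lambda_z\dx}{\int_\Omega a\,\lambda_z\dx}\Bigr|\,\|a^{1/2}\nabla\lambda_z\|_{L^2(T)}\ \lesssim\ (H/\varepsilon)^{1/2}\,H_T^{-1}\,\|a^{1/2}w\|_{L^2(\omega_z)}.
\]
On an interior patch $\omega_T$, $\QI$ reproduces constants elementwise because $\sum_z\lambda_z\equiv1$ there; writing $v_h-\QI v_h=(v_h-c)-\QI(v_h-c)$, summing the two displays (with $w=v_h-c$) over the finitely many $z\in\N_H\cap T$, and choosing $c$ as the minimiser in the weighted Poincar\'e inequality \eqref{wPoincare}, gives the $T$-local bound $\lesssim(1+(H/\varepsilon)^{1/2})\sqrt{C_{\text{P}}}\,\|a^{1/2}\nabla v_h\|_{L^2(\omega_T)}$. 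On patches meeting $\partial\Omega$ one takes $w=v_h$ directly and invokes the weighted Friedrichs inequality \eqref{wFriedrichs} instead, so no constant reproduction is needed. Summing over $T$ with finite overlap establishes (QI3) with $\Cint\lesssim(1+(H/\varepsilon)^{1/2})\sqrt{C_{\text{P}}}$, which by Lemma~\ref{lem_poincare} is independent of $\beta/\alpha$ under quasi-monotonicity.

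For (QI4) I would use the functions $\eta_z\in V_h$ supplied by Lemma~\ref{lem:QI4basis}. Given $v_H\in V_H$, set $v_h:=\sum_{z\in\N_H}v_H(z)\,\eta_z$. Then $\QI v_h=\sum_z v_H(z)\QI\eta_z=\sum_z v_H(z)\lambda_z=v_H$; and since $\support\eta_z\subset\omega_z$, while $v_H(z)\ne0$ forces $v_H\ne0$ on a set of positive measure in every element containing $z$ (hence $\omega_z\subset\support v_H$), also $\support v_h\subset\support v_H$. It remains to prove the energy bound. Using \eqref{QI4basis} elementwise,
\[
\|a^{1/2}\nabla v_h\|_{L^2(T)}\ \le\ \sum_{z\in\N_H\cap T}|v_H(z)|\,\|a^{1/2}\nabla\eta_z\|_{L^2(T)}\ \le\ C_{\text{base}}\sum_{z\in\N_H\cap T}|v_H(z)|\,\|a^{1/2}\nabla\lambda_z\|_{L^2(T)},
\]
and the task is to turn the right-hand side into $\lesssim C_{\text{base}}\,\|a^{1/2}\nabla v_H\|_{L^2(\omega_T)}$ before summing over $T$, giving (QI4) with $\Cint'\lesssim C_{\text{base}}=\mathcal{O}(H^2/\varepsilon^2)$.

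I expect this last step to be the main obstacle. The elementwise estimate overcounts: on an element where $v_H$ is locally constant its right-hand side does not vanish although $\|a^{1/2}\nabla v_H\|_{L^2(T)}$ does, so one cannot simply replace the nodal values $v_H(z)$ by nodal differences $v_H(z)-v_H(z_0)\lesssim H_T\|\nabla v_H\|_{L^\infty(T)}$ unless the $\eta_z$ satisfy an additional partition-of-unity-type property (that $\sum_z\eta_z$ reproduces $\sum_z\lambda_z$) or the correction $\eta_z-\lambda_z$ is localised to the region where $a$ is large in a way that makes the sum telescope; the higher-dimensional construction behind Lemma~\ref{lem:QI4basis} must supply exactly enough structure for this. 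This is also where the quasi-monotonicity hypothesis is needed beyond the weighted Poincar\'e inequality it implies, since otherwise ratios of the form $\int_{\omega_z}a\,\dx\,/\int_T a\,\dx$ entering the assembly are unbounded in the contrast. A minor caveat is that the argument for (QI3) uses the weighted Poincar\'e/Friedrichs inequalities on vertex patches $\omega_z$, whereas Assumption~\ref{a:quasimono} states them on element patches $\omega_T$; the two families are comparable, so this only costs a fixed shape-regularity factor. With all four items in hand, $\Cint$ and $\Cint'$ depend only on shape regularity, on $C_{\text{P}}$, and on $H/\varepsilon$, and hence, by Lemma~\ref{lem_poincare}, not on $\beta/\alpha$, which is the assertion.
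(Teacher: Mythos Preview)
Your treatment of (QI1)--(QI3) matches the paper's argument essentially line for line: positive definiteness of the weighted mass matrix for (QI2), the Cauchy--Schwarz bound with $\lambda_z^2\le\lambda_z$ for the weighted $L^2$ part of (QI3), the inverse-type estimate \eqref{def:Cstar} applied to $\lambda_z$ for the energy part, and constant reproduction plus the weighted Poincar\'e/Friedrichs inequality of Assumption~\ref{a:quasimono} to close. Your constant $\Cint\lesssim(1+(H/\varepsilon)^{1/2})\sqrt{C_{\text{P}}}$ is exactly the paper's $\sqrt{C_{\text{P}}\,C_{\text{inv},1}}$.

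The gap is in (QI4), and you have diagnosed it correctly but not closed it. With your ansatz $v_h=\sum_z v_H(z)\,\eta_z$, the elementwise bound $\sum_{z\in T}|v_H(z)|\,\|a^{1/2}\nabla\lambda_z\|_{L^2(T)}$ simply cannot be controlled by $\|a^{1/2}\nabla v_H\|_{L^2(\omega_T)}$: take $v_H\equiv c$ locally. Lemma~\ref{lem:QI4basis} does not furnish a partition-of-unity property for the $\eta_z$, so the hoped-for telescoping is not available, and quasi-monotonicity does not rescue this step either (in the paper it enters only through the weighted Poincar\'e inequality). The paper's remedy is to change the ansatz to
\[
v_h \;=\; v_H \;+\; \sum_{z\in\N_H}\bigl(v_H(z)-\QI v_H(z)\bigr)\,\eta_z,
\]
which still satisfies $\QI v_h=v_H$ and $\support v_h\subset\support v_H$, but replaces the raw nodal values $v_H(z)$ by the nodal \emph{errors} $v_H(z)-\QI v_H(z)$. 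These are controlled via the weighted inverse estimate \eqref{def:Cstarstar},
\[
|v_H(z)-\QI v_H(z)|\;\le\;C_{\text{inv},2}\Bigl(\textstyle\int_T a\,\dx\Bigr)^{-1/2}\|a^{1/2}(v_H-\QI v_H)\|_{L^2(T)},
\]
after which the already-established weighted $L^2$ stability \eqref{ineq:wL2} and the weighted Poincar\'e inequality yield the energy bound with $\Cint'\lesssim C_{\text{inv},2}\sqrt{C_{\text{base}}\,C_{\text{P}}}$. In short, the missing idea is to correct $v_H$ rather than to reassemble it from scratch.
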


\begin{proof}
(QI1) is satisfied by definition.

To prove (QI2), note that for any $v_H := \sum_{z \in \mathcal{N}_H}
\gamma_z \lambda_z \in V_H$, we have
$$
\QI v_H = \sum_{z \in \mathcal{N}_H} (D^{-1}M\boldsymbol{\gamma})_z
\lambda_z,
$$ 
where $\boldsymbol{\gamma} := (\gamma_z)_{z \in
  \mathcal{N}_H}$, $M$ is the mass matrix with entries $M_{z,z'} :=
\int_{\Omega} a \lambda_z \lambda_{z'} \, \dx$, and $D$ is a diagonal
weighting matrix, with strictly positive entries $D_{z,z} :=
\int_{\Omega} a \lambda_z \, \dx$. Since $M$ is invertible, the mapping $\boldsymbol{\gamma} \to D^{-1}M\boldsymbol{\gamma}$ is bijective, and so the linear map $\QI$ is an isomorphism from $V_H$ to $V_H$.

The proof of (QI3) is analogous to the proof of \cite[Lemma
4.1]{SVZ_SINUM2012}. Let $v_h \in V_h$ and let $T \in \mathcal{T}_H$. Note first that
\[
(\QI v_h(z))^2 \; \le \; \frac{\int_\Omega a v_h^2 \lambda_z \,
  \dx}{\int_\Omega a \lambda_z \, \dx},
\]
which, since $\lambda_z\le 1$, implies
\begin{equation}
\label{ineq:wL2initial}
\int_T a (\QI v_h)^2 \, \dx \; \lesssim \; \sum_{z \in T}
\frac{\int_\Omega a v_h^2 \lambda_z \,
  \dx}{\int_\Omega a \lambda_z \, \dx} \int_T a
\lambda_z^2 \, \dx \; \lesssim \; \int_{\omega_T} a v_h^2 \,
\dx\,
\end{equation}
and consequently 
\begin{equation}
\label{ineq:wL2}
\int_T a (v_h - \QI v_h)^2 \, \dx \; \lesssim \; \int_{\omega_T} a v_h^2 \,
\dx\,.
\end{equation}
Similarly, 
\begin{equation}
\label{ineq:energy}
\int_T a |\nabla \QI v_h|^2 \, \dx \; \lesssim \; \sum_{z \in T}
\frac{\int_\Omega a v_h^2 \lambda_z \,
  \dx}{\int_\Omega a \lambda_z \, \dx} \int_T a
|\nabla \lambda_z|^2 \, \dx \,.
\end{equation}
Since $|\nabla \lambda_z|^2 = c_{T,z} H^{-2}_T$ on $T$, for some constant $c_{T,z}$ that only
depends on the shape of $T$, it can be pulled out of the last integral in \eqref{ineq:energy}. Also, 
using the inverse estimate \eqref{def:Cstar} with $v_H = \lambda_z$ we have
\[
\int_{T} a \, \dx\; \le \; C_{\text{inv},1} \int_{T} a \lambda_z \, \dx\,.
\]
Combined with \eqref{ineq:energy} and using again that $\lambda_z \le 1$, this leads
to
\begin{equation}
\label{ineq:energy2}
\int_T a |\nabla(v_h - \QI v_h)|^2 \, \dx \; \lesssim \;
C_{\text{inv},1} H^{-2}_T \int_{\omega_T} a v_h^2 \,
\dx\,.
\end{equation}

The result now follows (as in the proof of \cite[Lemma 4.1]{SVZ_SINUM2012}) from \eqref{ineq:wL2}, \eqref{ineq:energy2} 
and Assumption \ref{a:quasimono} by summation over all $T\in \mathcal{T}_H$, since either $\partial \omega_T
\cap \partial \Omega \not= \emptyset$ or $\{\lambda_z\}$ forms a partition of unity on $\omega_T$
and thus $\QI$ preserves constants. The constant in (QI3) satisfies $C_{\text{qip}}\lesssim \sqrt{C_P\,C_{\text{inv},1}}$. In the worst case, for $d=3$, we have $C_{\text{qip}} = \mathcal{O}(H/\varepsilon)$. (In fact, as we can see above, the factor $\sqrt{C_{\text{inv},1}}$ only appears in the bound of the energy error not in the $L^2$ part in (QI3). The $L^2$ part in (QI3) can be bounded with a constant independent of $H/\varepsilon$ in one dimension and the constant only grows logarithmically with $H/\varepsilon$ in two dimensions.)

For (QI4), we proceed as in the proof of 
\cite[Lemma 1]{2011arXiv1110.0692M}.  In Lemma \ref{lem:QI4basis}, we already proved (QI4) for any nodal basis function 
$\lambda_z$, $z \in \mathcal{N}_H$.
Now, to prove (QI4) for an arbitrary $v_H := \sum_{z \in \mathcal{N}_H} v_H(z)
\lambda_z \in V_H$, we choose $v_h = v_H + \sum_{z \in
  \mathcal{N}_H} (v_H(z) - \QI v_H(z)) \eta_z \in V_h$\,, where $\eta_z \in V_h$ is as defined in Lemma \ref{lem:QI4basis}. 
The facts that $\QI v_h = v_H$ and $\text{supp} \, v_h \subset \text{supp}
\, v_H$ follow immediately. 

To establish the stability bound, we use the inverse estimate \eqref{def:Cstarstar} together with \eqref{QI4basis}, \eqref{ineq:wL2} and the fact that $|\nabla \lambda_z|^2 = c_{T,z} H_T^{-2}$ on $T$, and we get
\begin{align*}
\| a^{1/2} \nabla v_h\|^2_{L^2(T)} \ & \lesssim \ \| a^{1/2} \nabla v_H\|^2_{L^2(T)} + \sum_{z \in T} |v_H(z) - \QI v_H(z)|^2 \, \| a^{1/2} \nabla \eta_z\|^2_{L^2(T)} \\
& \le \ \| a^{1/2} \nabla v_H\|^2_{L^2(T)} + \sum_{z \in T} |v_H(z) - \QI v_H(z)|^2 \, C_{\text{base}} \left(\int_T  a(x)\right) c_{T,z} H_T^{-2} \\
& \lesssim \ \| a^{1/2} \nabla v_H\|^2_{L^2(T)} + C_{\text{inv},2}^2C_{\text{base}} \, H_T^{-2}\,\| a^{1/2}(v_H - \QI v_H) \|^2_{L^2(T)} \\ 
& \lesssim \ C_{\text{inv},2}^2C_{\text{base}} C_P \, \| a^{1/2} \nabla v_H\|^2_{L^2(\omega_T)}\,.
\end{align*}
Summing over all $T \in \mathcal{T}_H$, we then obtain (QI4) with a constant $C_{\text{qip}}' \lesssim C_{\text{inv},2}\sqrt{C_{\text{base}} C_P}$. In the worst case, for $d=3$, we may have $C_{\text{qip}}' = \mathcal{O}\left((H/\varepsilon)^{5/2}\right)$.
\end{proof}

\subsection{Alternative quasi-interpolation operators}
\label{ss:alternative}

The previous interpolation operators are associated with $L^2$ and $A$-weighted $L^2$ projections onto classical finite elements. While those projections are global operators, we will now consider local projections. In other works \cite{Brown.Peterseim:2014,Peterseim2014,Gallistl.Peterseim:2015}, local projections turned out to be superior over the (weighted) $L^2$-projections and their corresponding non-projective quasi-interpolations. 
\begin{definition}[$A$-weighted projective quasi-interpolation]\label{d:pqi}
Given $v \in V_h$, we define
\begin{equation}\label{e:defproj}
\QI^{\operatorname{proj},A} v  := \sum_{z \in \mathcal{N}_H} \mathcal{P}_z v(z) \lambda_z,
\end{equation}
where, for any $z\in\mathcal{N}_H$, $\mathcal{P}_z v\in V_H\vert_{\omega_z}$ is the local weighted $L^2$ projection onto the coarse finite element space restricted to the nodal patch $\omega_z$, i.e.,
\begin{equation}\label{e:defproj1}
\int_{\omega_z}a\, \mathcal{P}_z v\, w_H\dx  = \int_{\omega_z}a\,v\, w_H\dx\qquad\text{for all }w_H\in V_H\vert_{\omega_z}.
\end{equation}
\end{definition}

Since $\QI^{\operatorname{proj},A}$ is a projection,  (QI2) and (QI4) are satisfied trivially here with $C_{\text{qip}}'=1$. Assumption (QI1) is again satisfied by definition. Assumption (QI3) can be verified as in the proof of Proposition \ref{prop:assumptions} for $\QI$ with a constant $C_{\text{qip}}$ that is independent of $\beta/\alpha$ but does depend again on $H/\varepsilon$. The key observation is that the local mass matrix $M_z$ with entries $M_{z;\zeta,\zeta'} := \int_{\omega_z} a \,\lambda_\zeta \,\lambda_{\zeta'} \dx$ associated with the patch $\omega_z$ is spectrally equivalent to $D_z := \text{diag}(M_z)$. Let  $N_z := \text{dim}\left(V_H\vert_{\omega_z}\right)$, then this means that
\begin{equation}
\label{ineq:spectral_local}
\mu_{\min,z} \textbf{w}^T  D_z\textbf{w} \; \le \; \textbf{w}^T  M_z \textbf{w} \; \le \; \mu_{\max,z} \textbf{w}^T  D_z \textbf{w}, \quad \text{for all} \  \textbf{w} \in \mathbb{R}^{N_z} \,,
\end{equation}
which in turn guarantees that 
\[
\left(\mathcal{P}_zv(z)\right)^2 \int_{\omega_z}a\,\lambda_z^2 \dx \le \mu^{-1}_{\min,z} \int_{\omega_z}a\, \left(\mathcal{P}_z v\right)^2 \dx \le \mu^{-1}_{\min,z} \int_{\omega_z}a\, v^2 \dx
\]
and allows to establish a bound akin to \eqref{ineq:wL2initial}. The remainder follows as in the proof of Proposition \ref{prop:assumptions}.

Crucially, 
we require that $\mu_{\min,z}^{-1}$ in \eqref{ineq:spectral_local} can be bounded independently of $\beta/\alpha$. Note that $\mu_{\min,z}$ is also the smallest eigenvalue of $D^{-1}_z M_z$. As in Lemma \ref{lem:QI4basis}, we show this only for the special case of $d=1$ and $a|_{\omega^*} \equiv \beta$, in some interval $\omega^* \subset \omega_z$ with diameter $\text{diam}(\omega^*) = 2\varepsilon$, and $a(x) = 1$ otherwise, as depicted in Figure \ref{fig_basis}. Without loss of generality, we assume that $\omega^* \subset T$ for one of the two elements $T$ making up $\omega_z$ and that $H=1$ again. An elementary calculation shows that 
\[
M_z = 2\beta \varepsilon \, \left[ \begin{array}{ccc}
(1-y)^2 + \varepsilon^2/3 + \mathcal{O}\left(\frac{1}{\beta \varepsilon}\right) & y(1-y) - \varepsilon^2/3  + \mathcal{O}\left(\frac{1}{\beta \varepsilon}\right) & 0\\
y(1-y) - \varepsilon^2/3  + \mathcal{O}\left(\frac{1}{\beta \varepsilon}\right) & y^2 + \varepsilon^2/3 + \mathcal{O}\left(\frac{1}{\beta \varepsilon}\right) & \frac{1}{12\beta \varepsilon}\\
0 & \frac{1}{12\beta \varepsilon} & \frac{1}{6\beta \varepsilon}
\end{array}\right].
\]
Considering first $y \gg \varepsilon$ and $1-y \gg \varepsilon$ and ignoring terms of $\mathcal{O}\left(\frac{1}{\beta \varepsilon}\right)$ and  $\mathcal{O}\left(\frac{\varepsilon^4}{y^4 (1-y)^4}\right) $, we get
\[
D_z^{-1} M_z = \left[ \begin{array}{ccc}
1 & \frac{y}{1-y} - \frac{\varepsilon^2}{3(1-y)^3} & 0\\
\frac{1-y}{y} - \frac{\varepsilon^2}{3y^3} & 1 & 0 \\
0 & \frac{1}{2} & 1
\end{array}\right]\,.
\]
The eigenvalues of $D_z^{-1} M_z$ satisfy
\[
0 = \text{det}(\mu I - D_z^{-1} M_z) = (\mu - 1) \left(\mu^2 -2 \mu +  \frac{\varepsilon^2}{3y^2(1-y)^2} \right)
\]
leading to $\mu_{\min,z} = \frac{\varepsilon^2}{6y^2(1-y)^2}$ which is independent of $\beta$. For $y  =  \mathcal{O}(\varepsilon)$ or $1-y = \mathcal{O}(\varepsilon)$,
it is even possible to bound $\mu^{-1}_{\min,z}$ independently of $\varepsilon$.

In the numerical experiments of Section~\ref{s:numexp} we will also consider the non-weighted variant $\QI^{\operatorname{proj}}$ that is defined in the same way with classical $L^2$ inner products ($a=1$) in \eqref{e:defproj1}.

\section{Numerical experiments}\label{s:numexp}
Three numerical experiments shall illustrate our theoretical results and illuminate their sharpness and limitations. Numerical experiments with highly oscillatory and high-contrast coefficients have already been documented in \cite{2011arXiv1110.0692M,2012arXiv1211.5954H,2012arXiv1211.3551H,HM_arXiv2013,2013arXiv1312.5922P}. While those results were based on the classical coefficient-independent interpolation defined in Section~\ref{ss:classicalQI}, this section considers several choices of interpolation operators and investigates the possible benefit of using $A$-weighted interpolation operators of Sections~\ref{ss:qi} and \ref{ss:alternative} when high contrast is present. 

\subsection{High-contrast blocks}\label{ss:numexp1}
The first model problem considers a two-phase coefficient with simple topology. 
The precise data of the first model problem is as follows,
\begin{align}\label{e:numexp1}
\Omega &:= ]0,1[^2;\\
g(x) &:= \begin{cases}
     0,& x\in[0,\tfrac{1}{2}[\times [0,1],\\
     1,& x\in[\tfrac{1}{2},1]\times [0,1];
    \end{cases},\\
A(x) &:= \begin{cases}
        \beta, & x\in [\tfrac{11}{32},\tfrac{5}{32}]\times[\tfrac{8}{32},\tfrac{11}{32}]\cup[\tfrac{5}{32},\tfrac{11}{32}]\times[\tfrac{8}{32},\tfrac{19}{32}],\\
        1,&\text{elsewhere.}
       \end{cases}
\end{align}
Since the lower bound of $A$ is one, the parameter $\beta\geq 1$ reflects the contrast. We consider the following values for the contrast, $\beta=1,10,\ldots,10^6$. The numerical experiment aims to study the dependence between these choices of the parameter and the accuracy of the numerical methods.

\begin{figure}
\begin{center}
\includegraphics[width=0.2\textwidth]{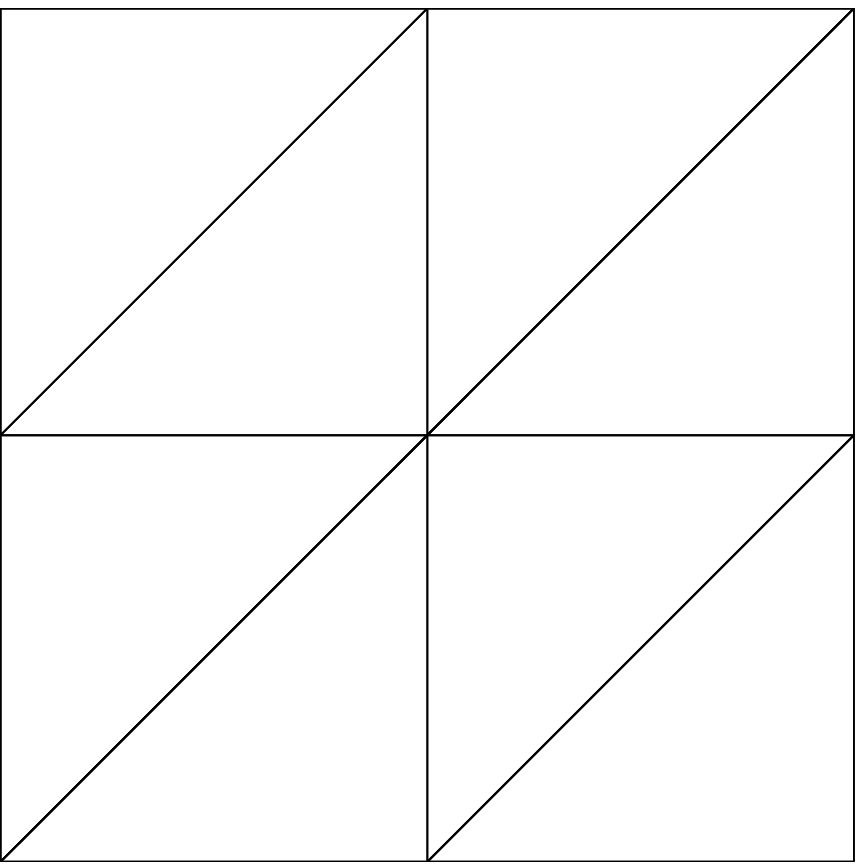}\hspace{2ex}\includegraphics[width=0.2\textwidth]{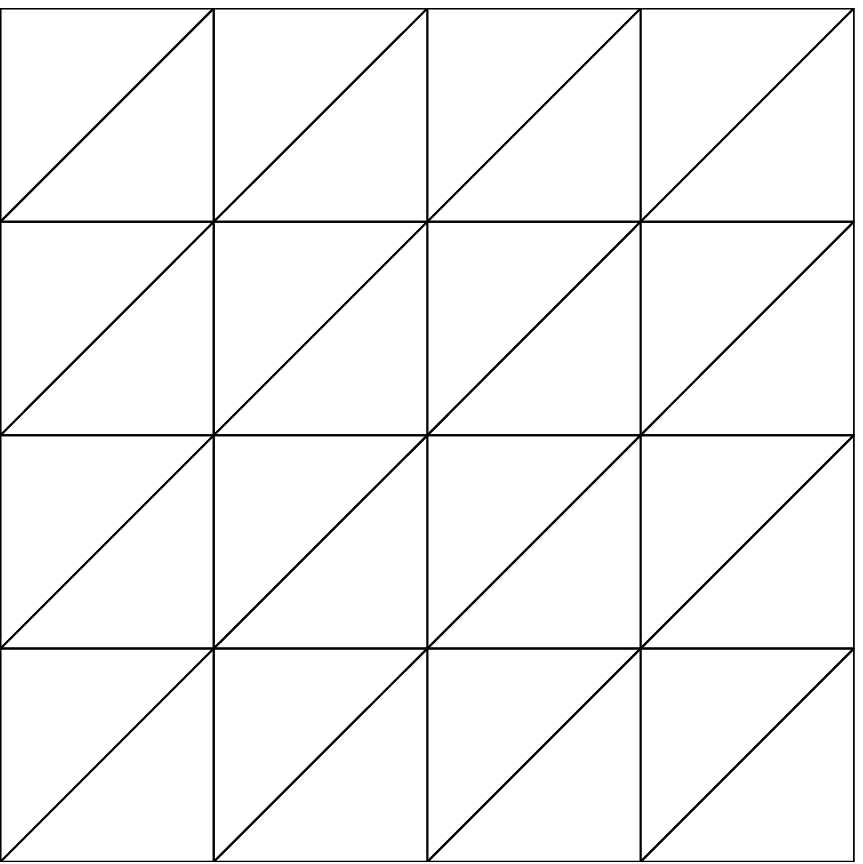}
\caption{Uniform triangulations of the unit square used as coarse meshes in the numerical experiments of Section~\ref{s:numexp}.\label{fig:meshes}}
\end{center}
\end{figure}
Consider the uniform coarse meshes with mesh widths $\sqrt{2}H=2^{-1},\ldots,2^{-6}$ of $\Omega$ as depicted in Figure~\ref{fig:meshes}. The reference mesh $\tri_h$ is derived by uniform mesh refinement of the coarse meshes and has maximal mesh width $h=2^{-8}/\sqrt{2}$. The corresponding $P1$ conforming finite element approximation on the reference mesh $\tri_h$ is denoted by $V_h$. We consider the reference solution $u_h\in V_h$ of \eqref{e:modelweak} with data given in \eqref{e:numexp1} and compare it with coarse scale approximations $\uc_k\in \Vc_k$ (cf. Definition~\ref{definition-full-vmm-truncation}) depending on the coarse mesh size $H$, the localization parameter $k$ and the underlying quasi-interpolation operator $\QI$. We consider four different quasi-interpolation operators, the $A$-independent variant $\QI$ defined in Section~\ref{ss:classicalQI}, the $A$-weighted version $\QI^A$ from Definition~\ref{d:qi}, the $A$-independent operator $\QI^{\operatorname{proj}}$ with projection property defined in Section~\ref{ss:alternative} and its $A$-weighted version $\QI^{\operatorname{proj},A}$.
\begin{figure}[tb]
\begin{center}
\subfigure[\label{fig:numexp1H_msfem}Results for $\QI$.
]{\includegraphics[width=0.38\textwidth]{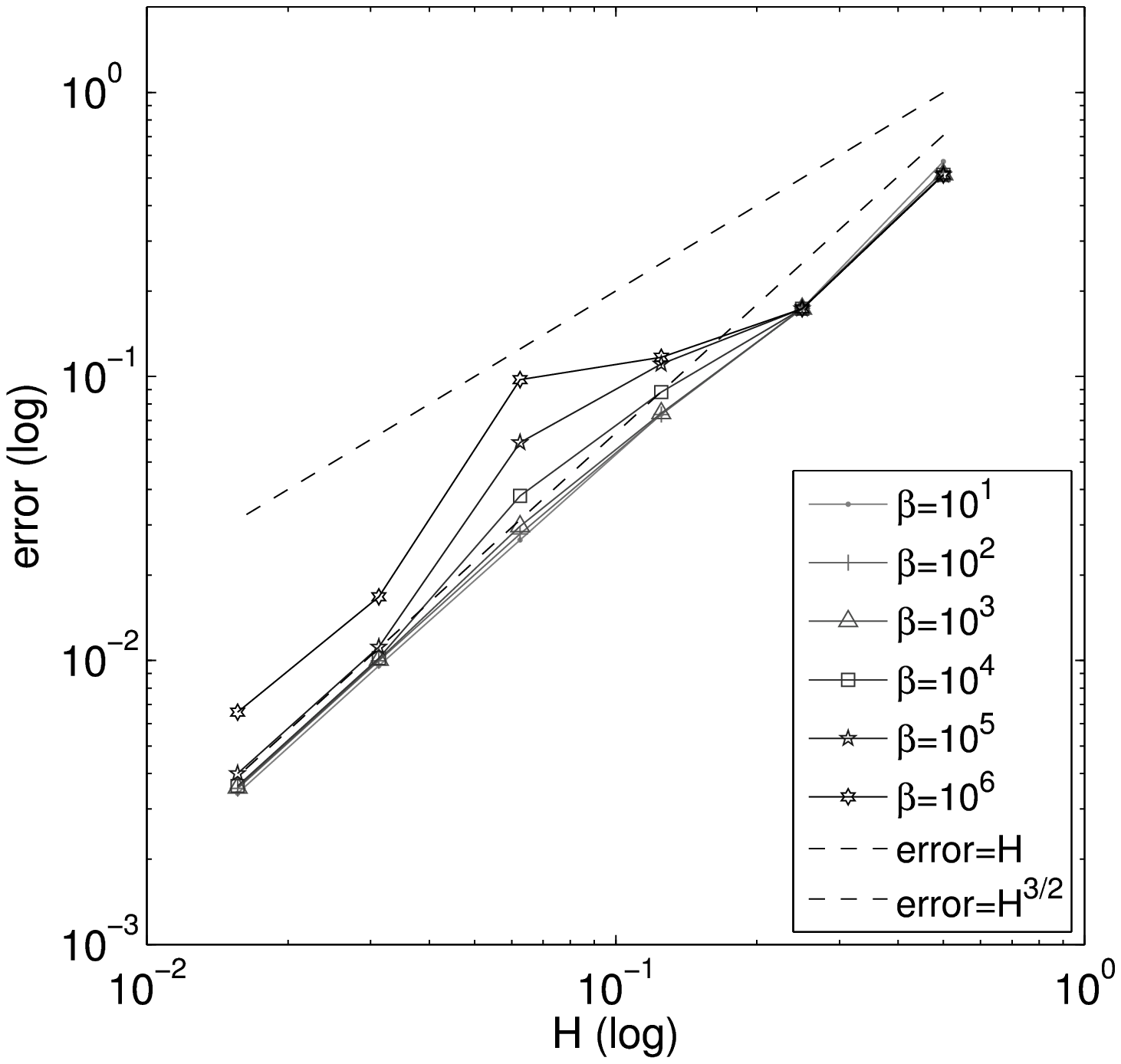}}
\subfigure[\label{fig:numexp1H_msfema}Results for $\QI^A$.
]{\includegraphics[width=0.38\textwidth]{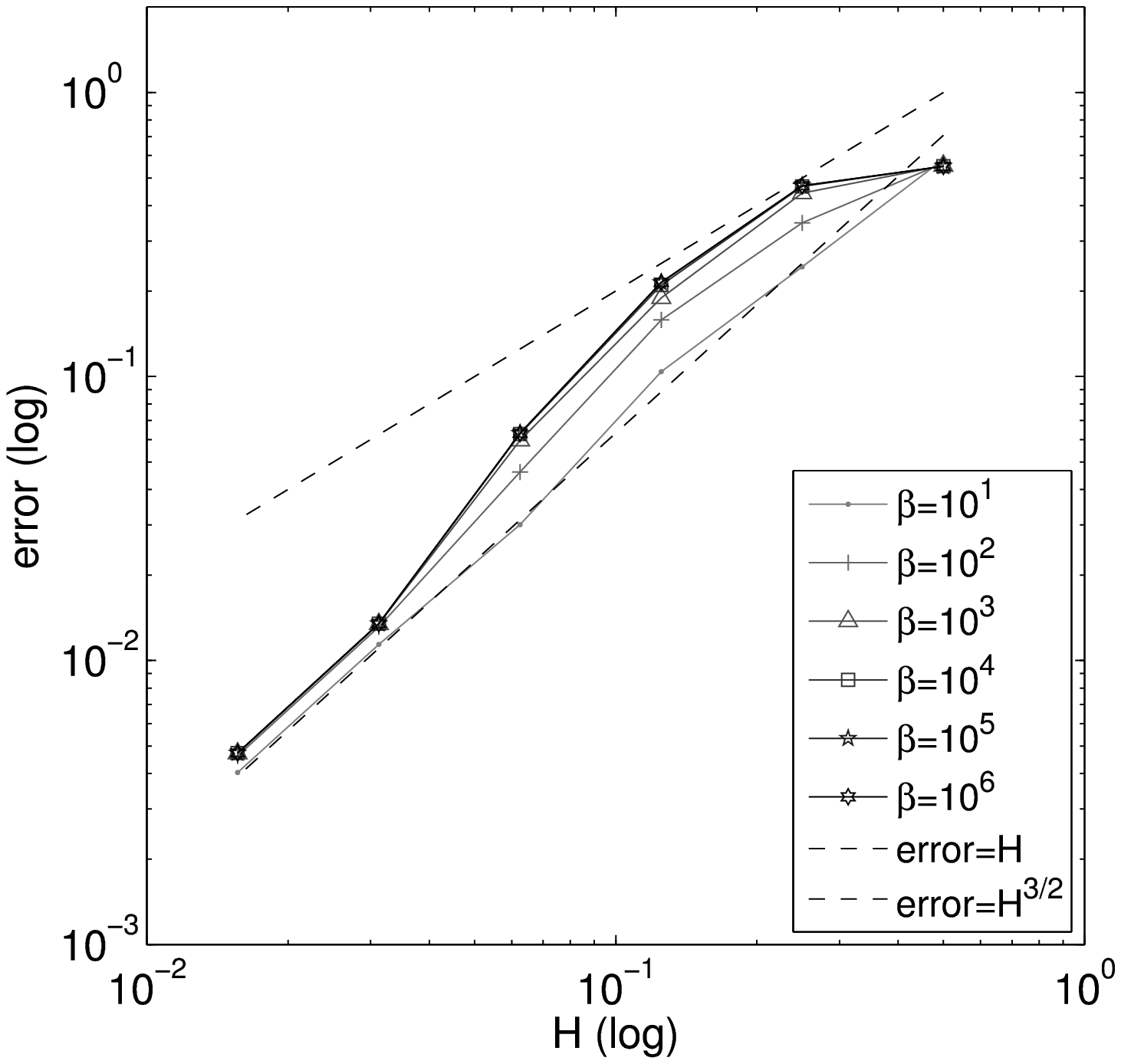}}\\
\subfigure[\label{fig:numexp1H_msfem1}Results for $\QI^{\operatorname{proj}}$.
]{\includegraphics[width=0.38\textwidth]{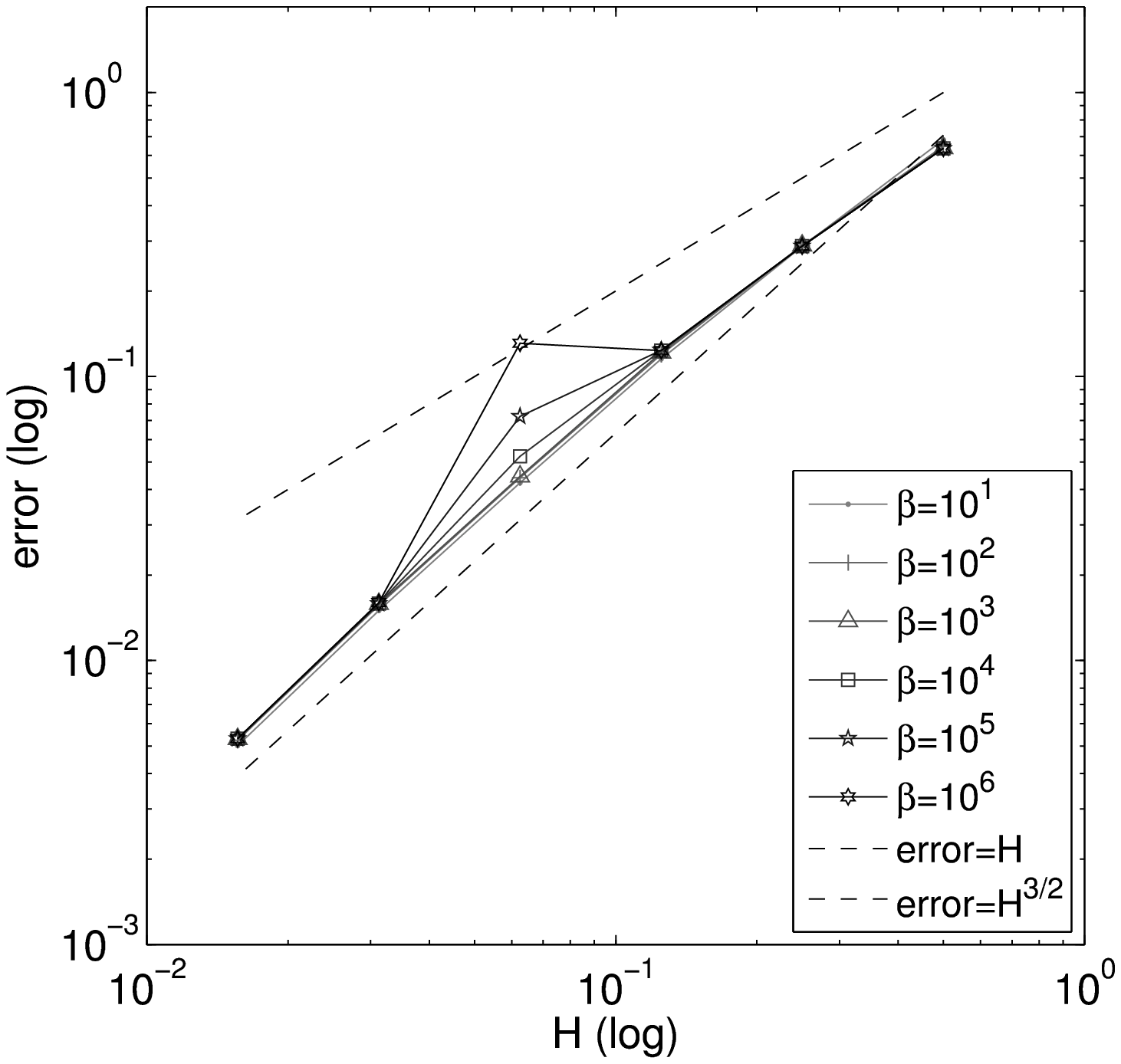}}
\subfigure[\label{fig:numexp1H_msfem1a}Results for $\QI^{\operatorname{proj},A}$.
]{\includegraphics[width=0.38\textwidth]{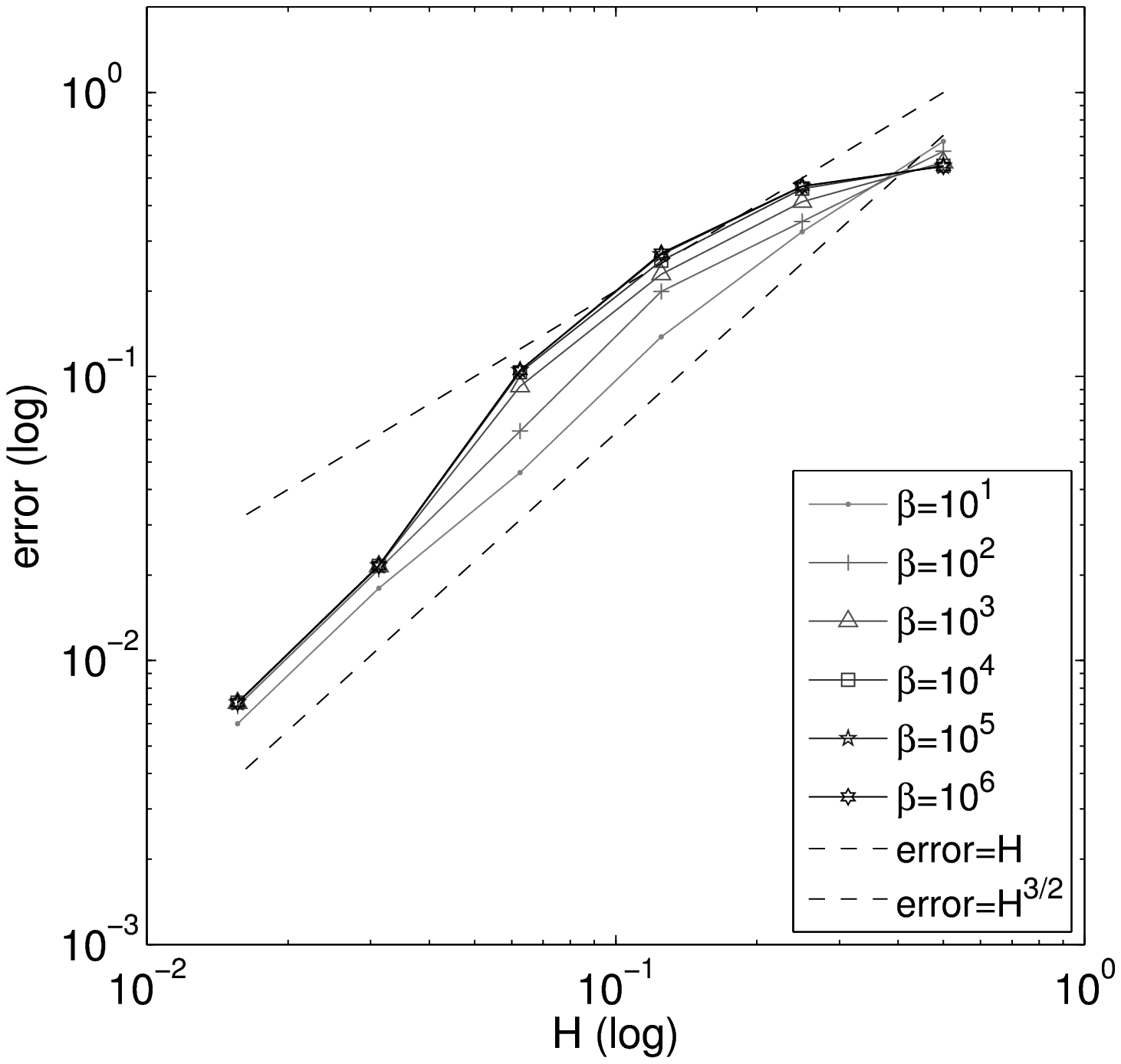}}\\
\end{center}
\caption{Numerical experiment of Section~\ref{ss:numexp1}: Results for high-contrast blocks with several choices of the contrast parameter $\beta$ ($\alpha=1$) depending on the coarse mesh size $H$. The reference mesh size $h=2^{-8}$ remains fixed. The localization parameter is tied to the coarse mesh size via the relation $k=|\log_2 H|+1$. \label{fig:numexp1H}}
\end{figure}

The results are visualized in Figures~\ref{fig:numexp1H} and~\ref{fig:numexp1k}. Figure~\ref{fig:numexp1H} shows the relative energy errors $\|A^{1/2}\nabla (u_h-\uc_{k(H)})\|/\|A^{1/2}\nabla u_h\|$ depending on the coarse mesh size $H$ for several choices of the contrast parameter $\beta=1,10,\ldots,10^6$. The localization parameter $k$ is tied to $H$ via the relation $k=k(H)=|\log_2 H|+1$ (without any dependence on $\beta$). 
For all choices of interpolation operators, only a very mild dependence on $\beta$ can be observed. In particular, all errors are below the reference curve $H$. Asymptotically, the experimental convergence rate $H^{3/2}$ is observed. This high rate is related to certain $L^2$ or $L^2(A)$ orthogonality properties of the interpolation operators as indicated in Section~\ref{ss:classicalQI}.

Figure~\ref{fig:numexp1k} aims to illustrate the role of the localization parameter. It depicts relative energy errors $\|A^{1/2}\nabla (u_h-\uc_{k(H)})\|/\|A^{1/2}\nabla u_h\|$ depending on the coarse mesh size $H$ for fixed contrast $\beta=10^6$ and several choices of the localization parameter $k=1,2,3,\ldots,8$. (We also show relative errors of the standard conforming $P1$-FEM on the coarse meshes for comparison.) We observe a much faster decay of the error when $k$ is increased for the methods that are based on $A$-weighted interpolation. For these methods, a fixed choice of $k=2$ or $k=3$ already gives very good accuracy for the range of coarse meshes considered. For these small choices of $k$, the methods based on $A$-independent interpolation are strongly affected by the high contrast. They are more accurate only for sufficiently large $k$.
\begin{figure}[tb]
\begin{center}
\subfigure[\label{fig:numexp1k_msfem}Results for $\QI$.
]{\includegraphics[width=0.38\textwidth]{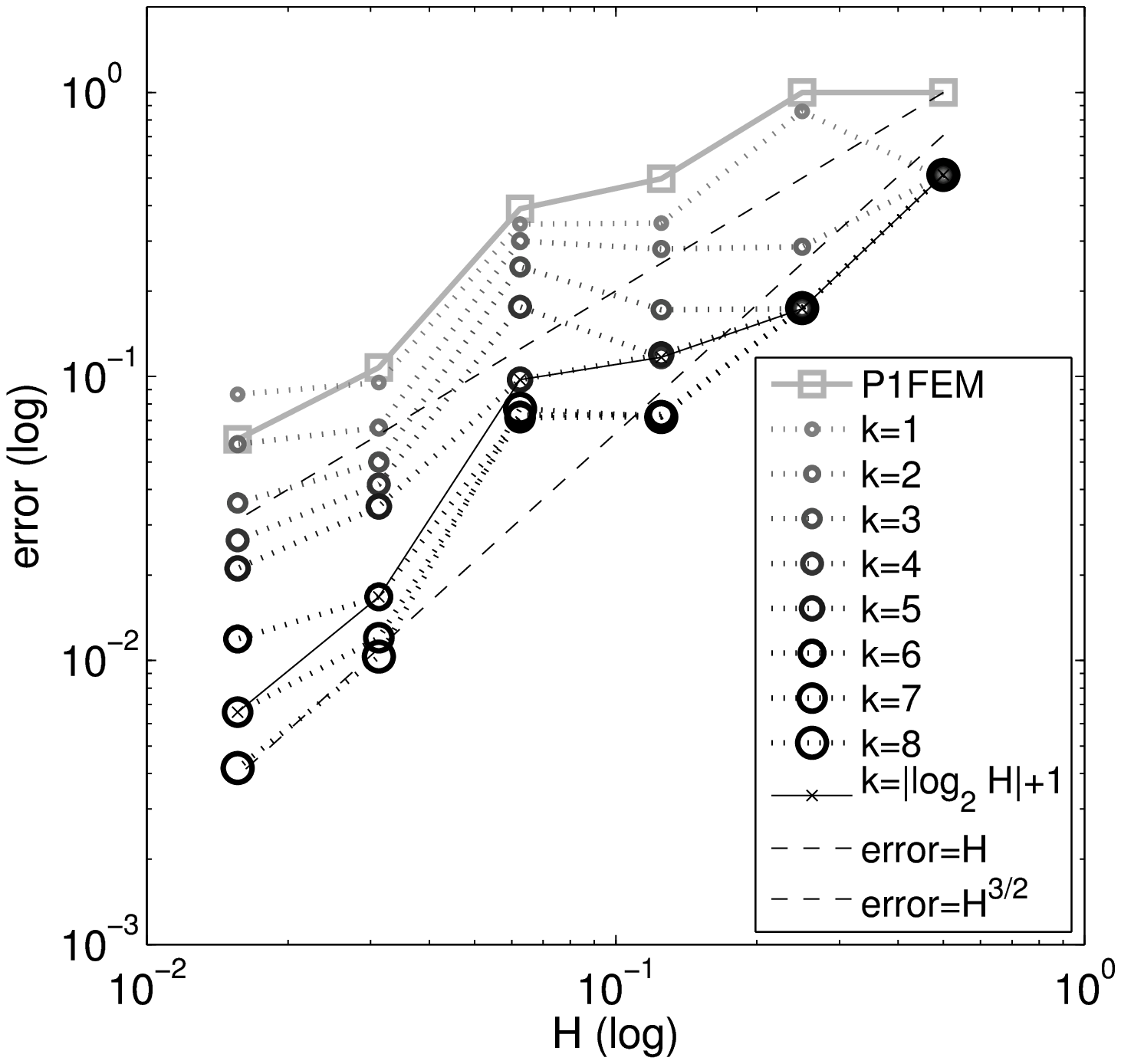}}
\subfigure[\label{fig:numexp1k_msfema}Results for $\QI^A$.
]{\includegraphics[width=0.38\textwidth]{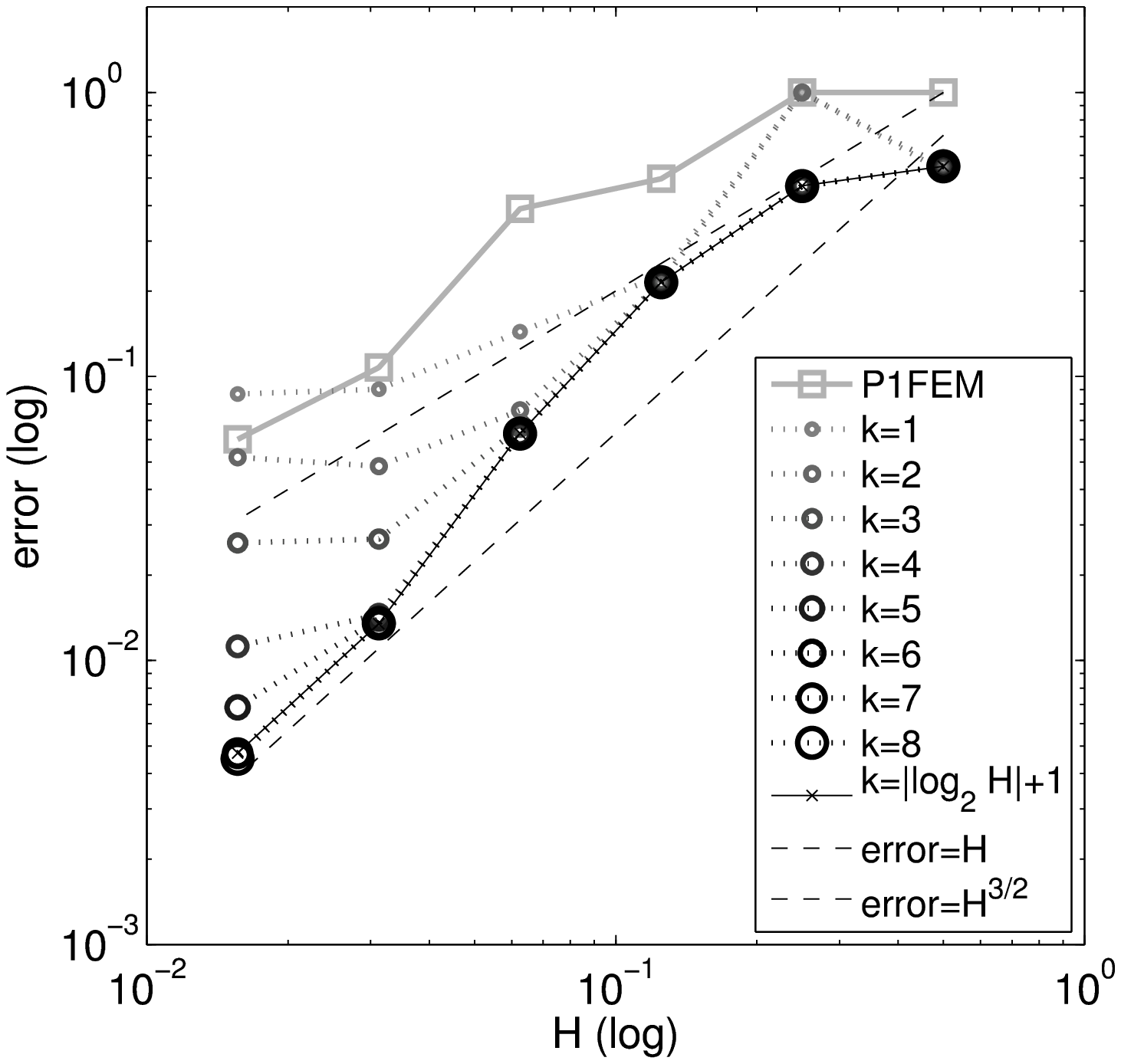}}\\
\subfigure[\label{fig:numexp1k_msfem1}Results for $\QI^{\operatorname{proj}}$.
]{\includegraphics[width=0.38\textwidth]{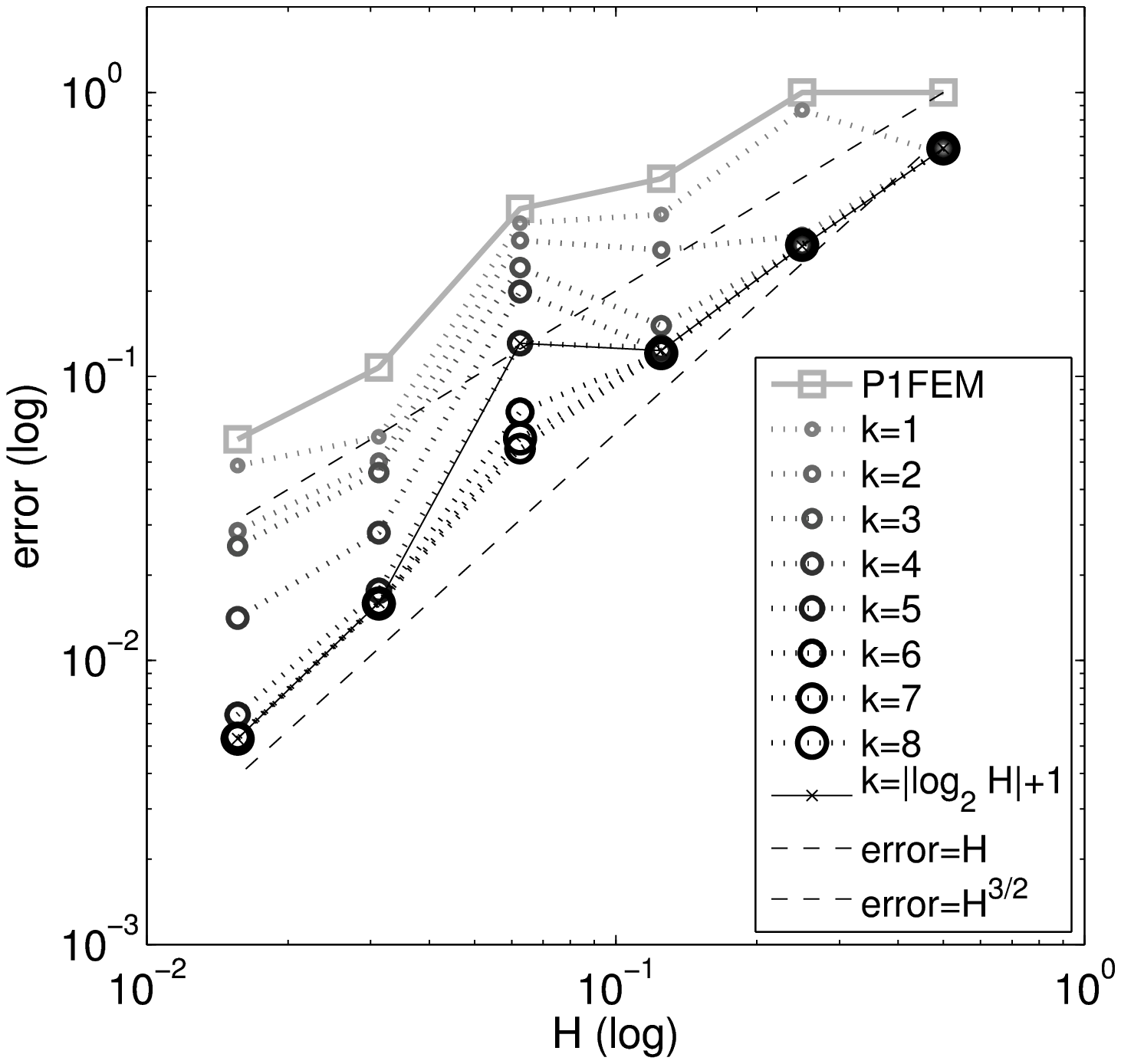}}
\subfigure[\label{fig:numexp1k_msfem1a}Results for $\QI^{\operatorname{proj},A}$.
]{\includegraphics[width=0.38\textwidth]{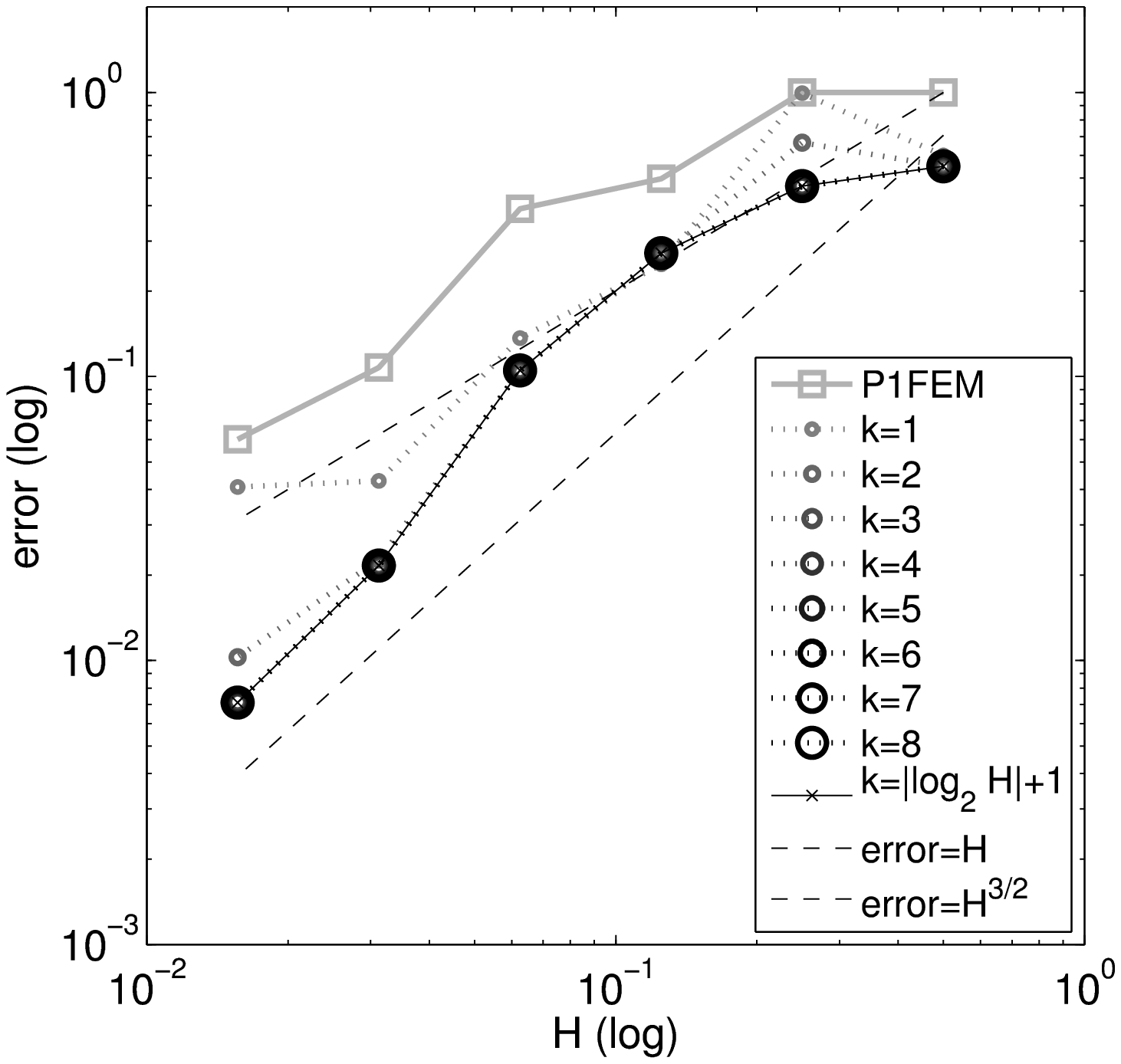}}\\
\end{center}
\caption{Numerical experiment of Section~\ref{ss:numexp1}: Results for high-contrast blocks with contrast parameter $\beta=10^6$ depending on the coarse mesh size $H$. The reference mesh size $h=2^{-8}$ remains fixed. The localization parameter $k$ is varied between $1$ and $8$. \label{fig:numexp1k}}
\end{figure}

\subsection{High-contrast channels}\label{ss:numexp2}
The second experiment repeats the previous computations for a different two-phase coefficient. The precise data of the second model problem is as follows,
\begin{align}\label{e:numexp2}
\Omega &:= ]0,1[^2;\\
g(x) &:= \begin{cases}
     0,& x\in[0,\tfrac{1}{2}[\times [0,1],\\
     1,& x\in[\tfrac{1}{2},1]\times [0,1];
\end{cases}\\
A(x)=A(x_1,x_2)&:=A_1(x_1,x_2)+A_1(x_2,x_1),\text{ where}\\
A_1(x) &:= \begin{cases}
        \beta/2, & x\in [\tfrac{8}{32},\tfrac{9}{32}]\times[\tfrac{1}{32},\tfrac{31}{32}]\cup[\tfrac{10}{32},\tfrac{11}{32}]\times[\tfrac{1}{32},\tfrac{31}{32}],\\
        1,&\text{elsewhere.}
       \end{cases}
\end{align}
Again, the parameter $\beta\geq 1$ reflects the contrast and the numerical experiment aims to study the dependence between this parameter and the accuracy of the numerical methods. 
\begin{figure}[tb]
\begin{center}
\subfigure[\label{fig:numexp2H_msfem}Results for $\QI$.
]{\includegraphics[width=0.38\textwidth]{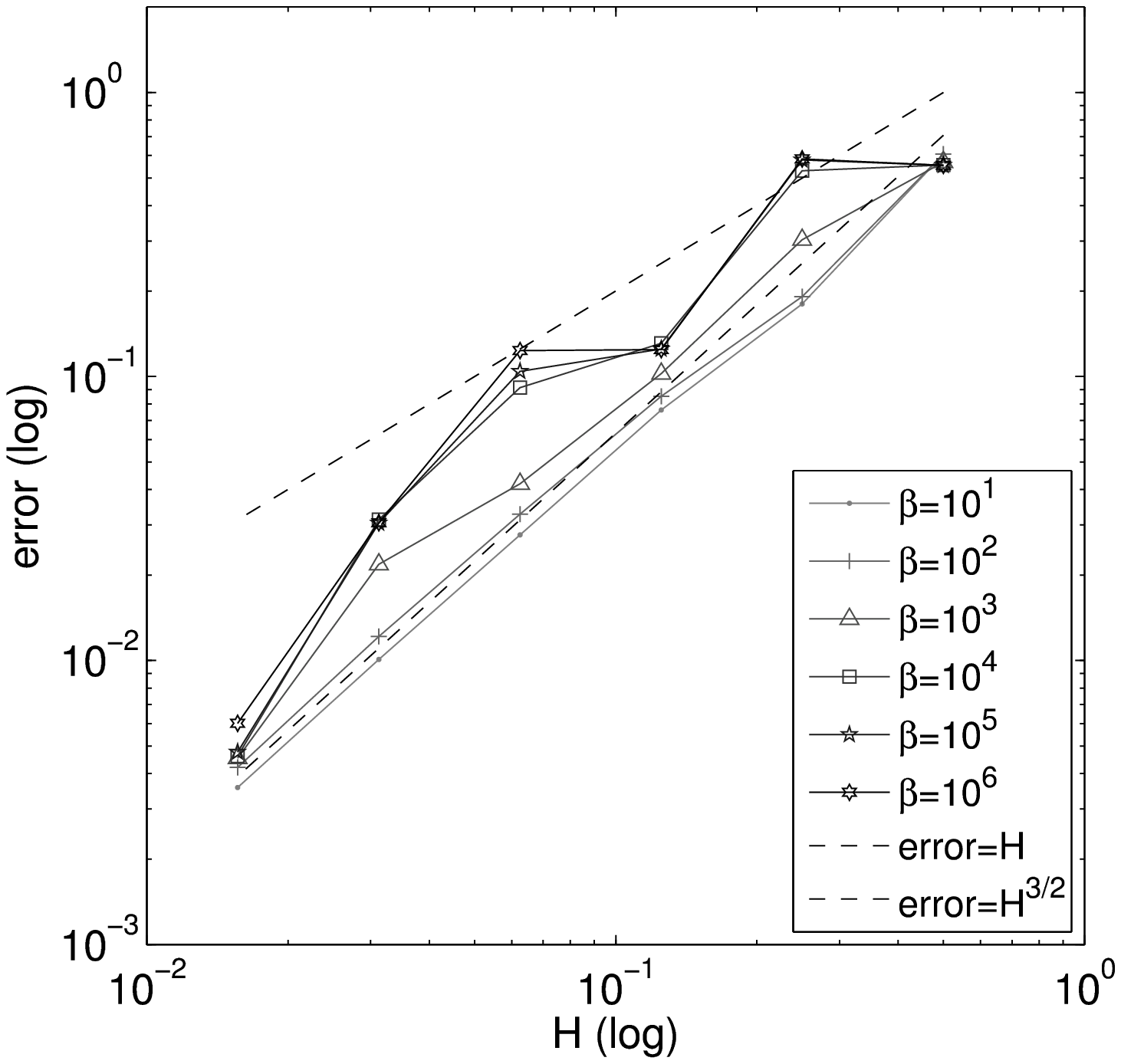}}
\subfigure[\label{fig:numexp2H_msfema}Results for $\QI^A$.
]{\includegraphics[width=0.38\textwidth]{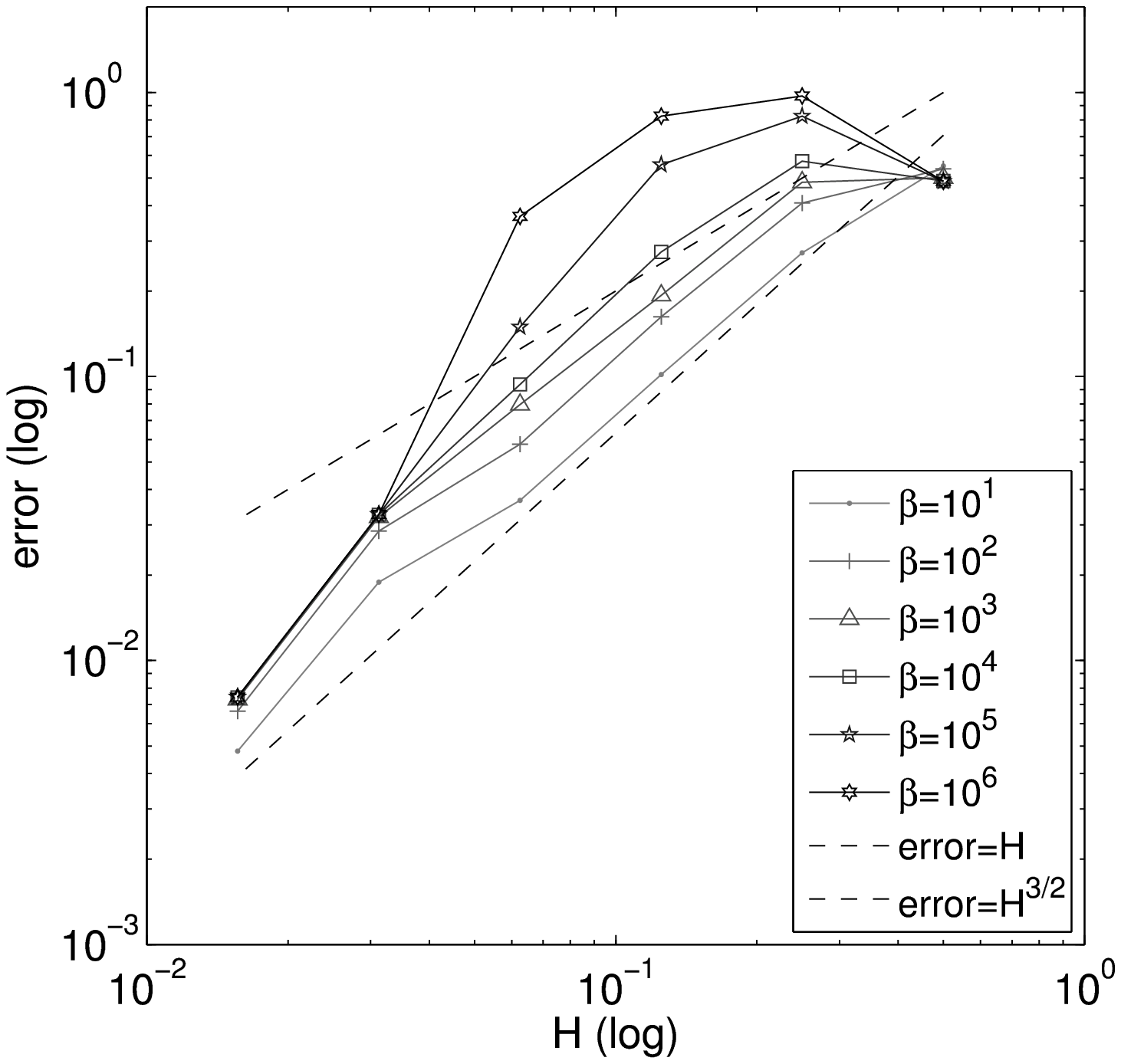}}\\
\subfigure[\label{fig:numexp2H_msfem1}Results for $\QI^{\operatorname{proj}}$.
]{\includegraphics[width=0.38\textwidth]{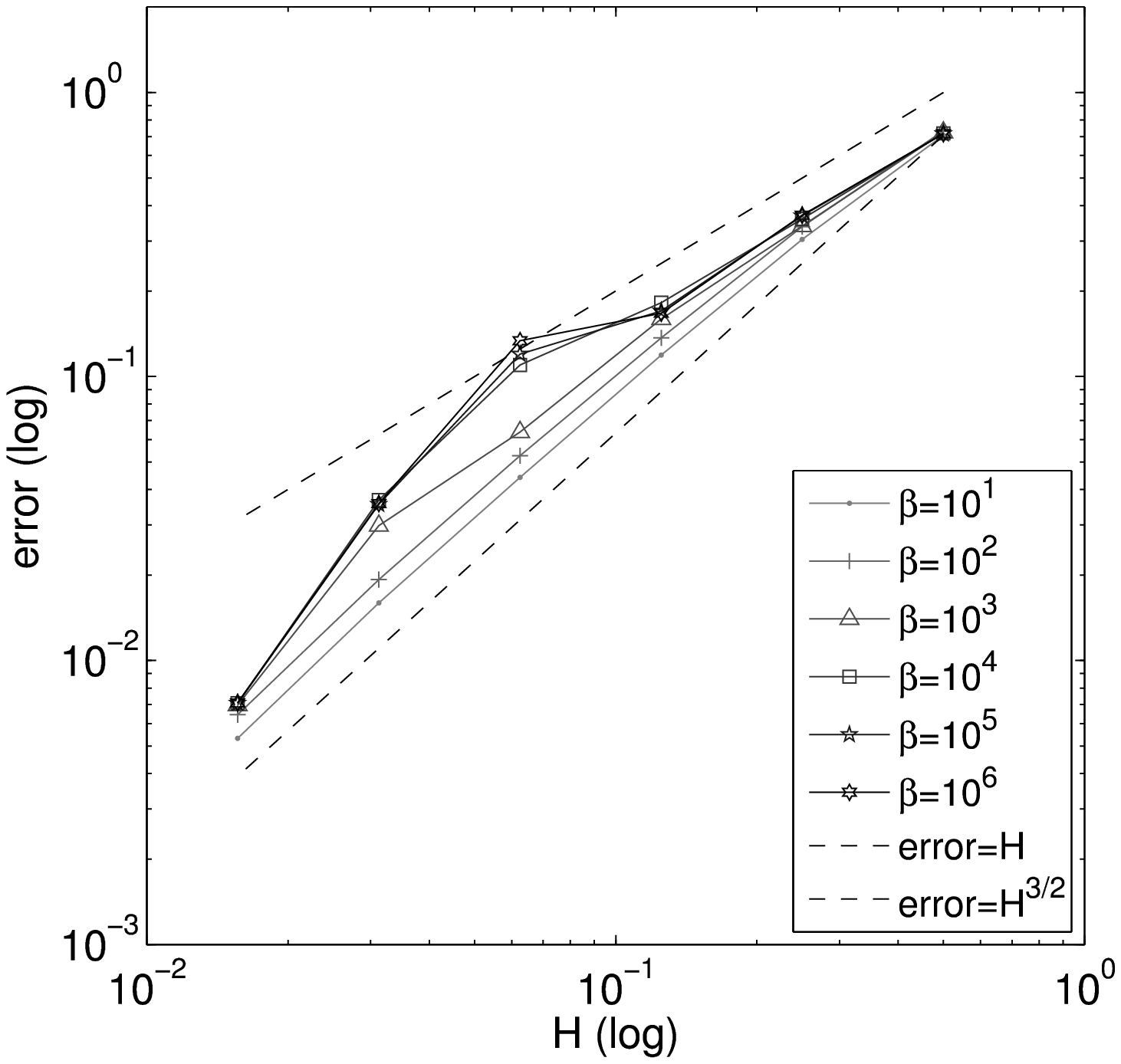}}
\subfigure[\label{fig:numexp2H_msfem1a}Results for $\QI^{\operatorname{proj},A}$.
]{\includegraphics[width=0.38\textwidth]{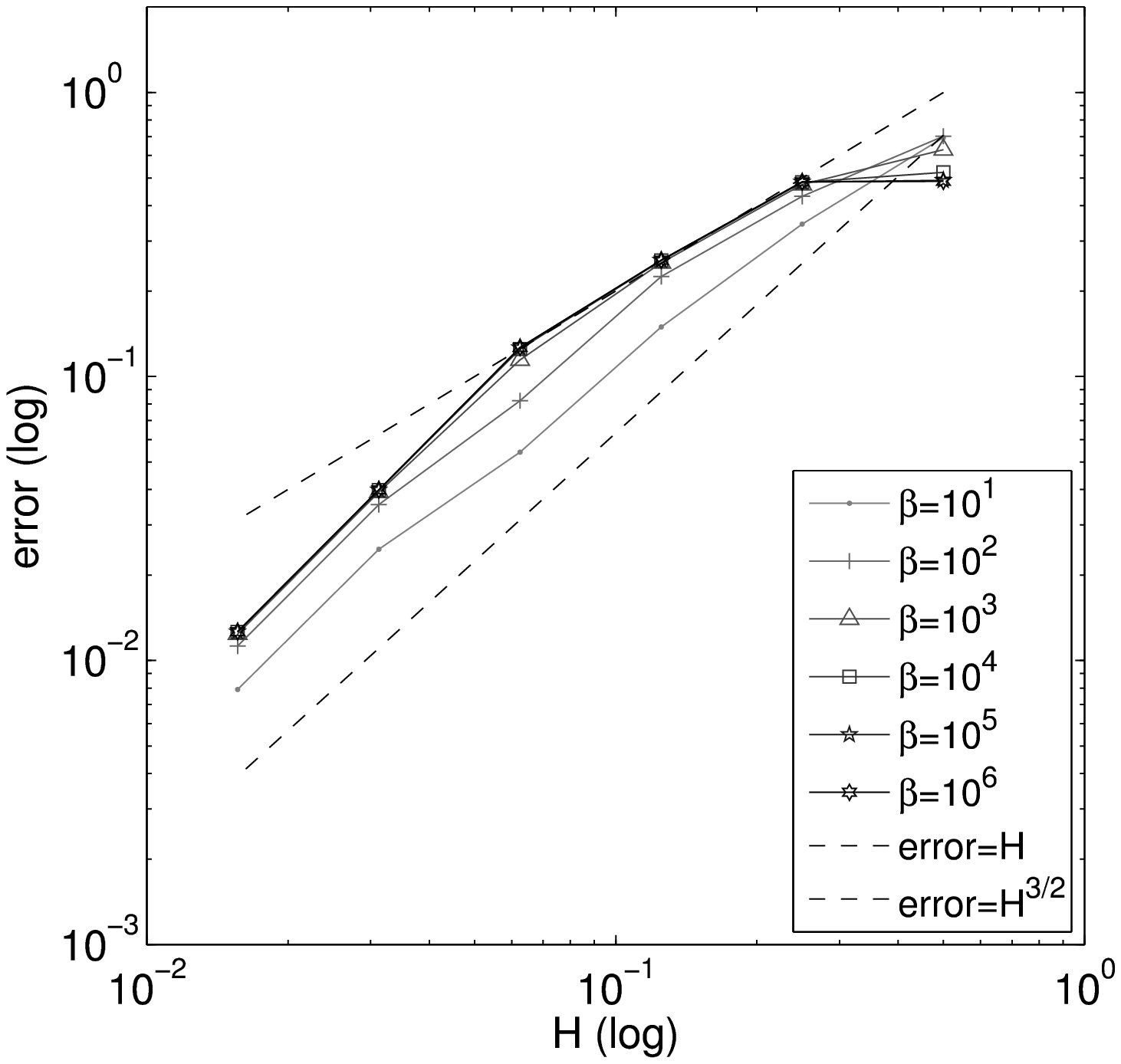}}\\
\end{center}
\caption{Numerical experiment of Section~\ref{ss:numexp2}: Results for high-contrast channels with several choices of the contrast parameter $\beta$ depending on the coarse mesh size $H$. The reference mesh size $h=2^{-8}$ remains fixed. The localization parameter $k=|\log_2 H|+1$ is tied to the coarse mesh size. \label{fig:numexp2H}}
\end{figure}

Apart from the coefficient, the experimental setup is exactly the same as in Section~\ref{ss:numexp1}. Figures~\ref{fig:numexp2H} and \ref{fig:numexp2k} show the results. The observations for the operators $\QI$, $\QI^{\operatorname{proj}}$, and $\QI^{\operatorname{proj},A}$ are similar as before. Again, the $A$-independent choices deliver more accuracy for sufficiently large localisation parameter whereas $\QI^{\operatorname{proj},A}$
is significantly more efficient for small $k$. 
By contrast, the operator $\QI^A$ performs much worse in this experiment. On the coarse meshes that do not resolve the coefficient, it requires a much larger choice of $k$ than the other operators to be accurate. We emphasize that this effect does neither contradict our theory nor can be explained by it. However, it clearly shows that the choice of the interpolation operator may have a large impact on the actual performance of the methods, a fact that motivates the further development and analysis of such operators.

\begin{figure}[tb]
\begin{center}
\subfigure[\label{fig:numexp2k_msfem}Results for $\QI$.
]{\includegraphics[width=0.38\textwidth]{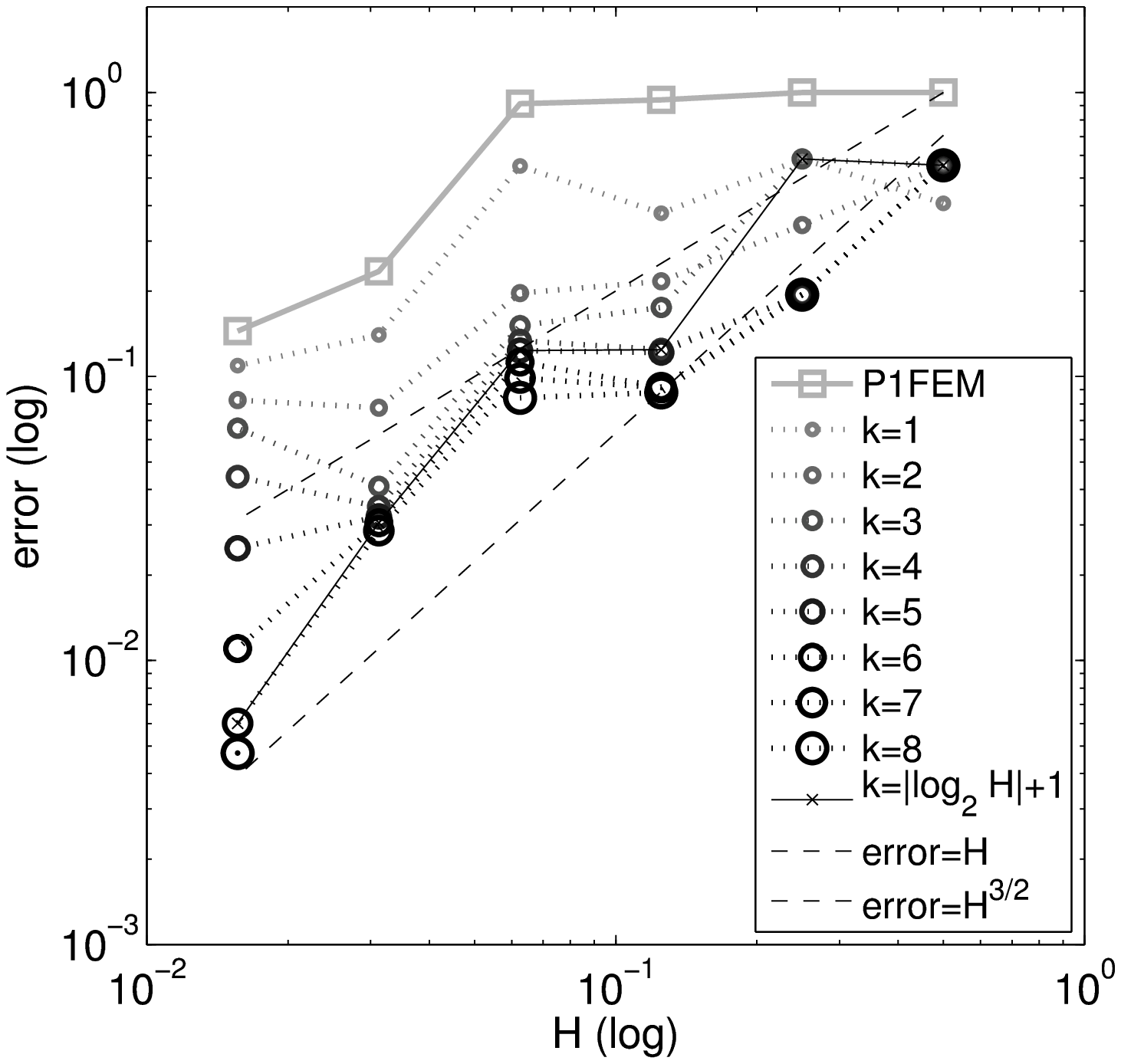}}
\subfigure[\label{fig:numexp2k_msfema}Results for $\QI^A$.
]{\includegraphics[width=0.38\textwidth]{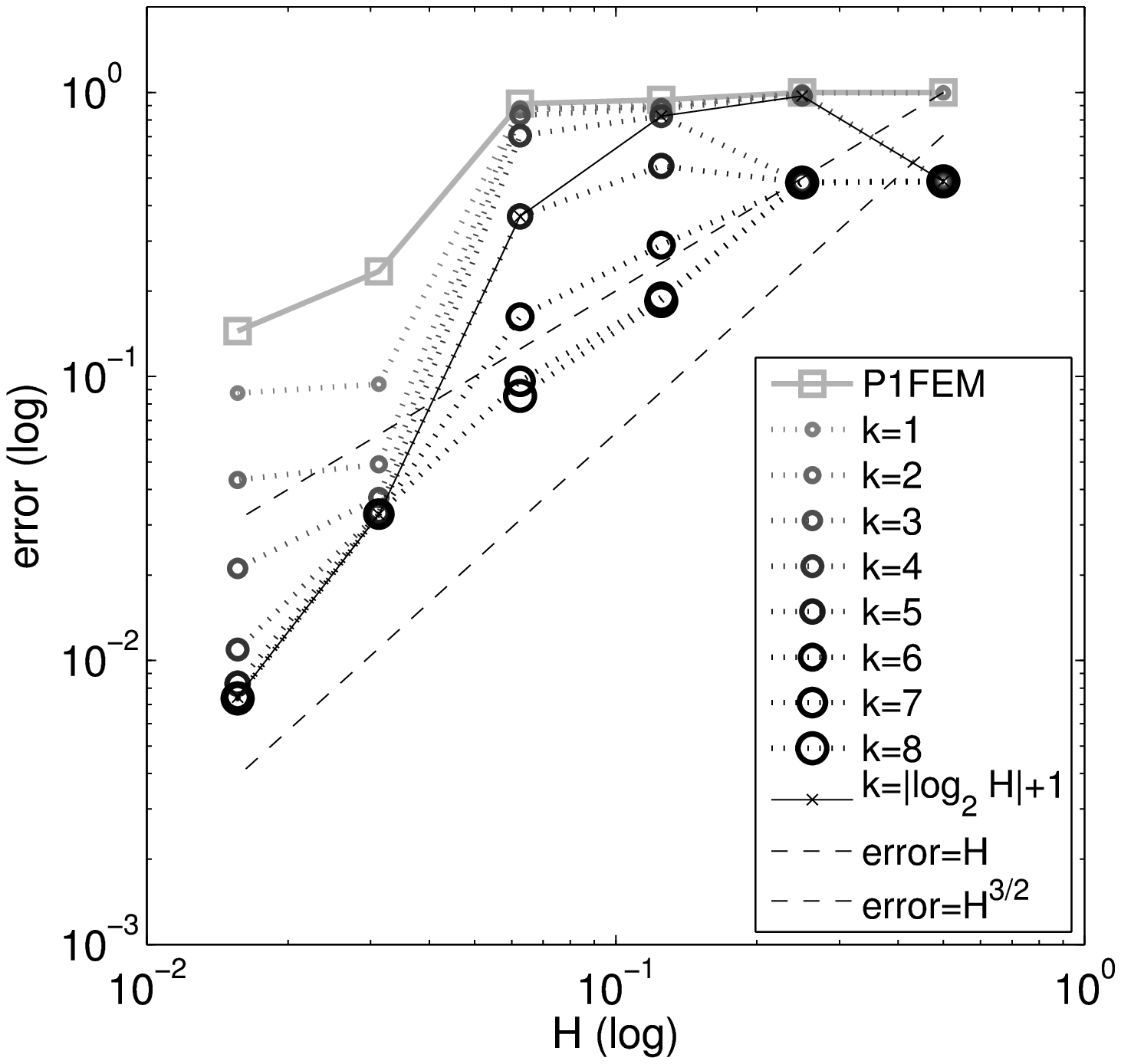}}\\
\subfigure[\label{fig:numexp2k_msfem1}Results for $\QI^{\operatorname{proj}}$.
]{\includegraphics[width=0.38\textwidth]{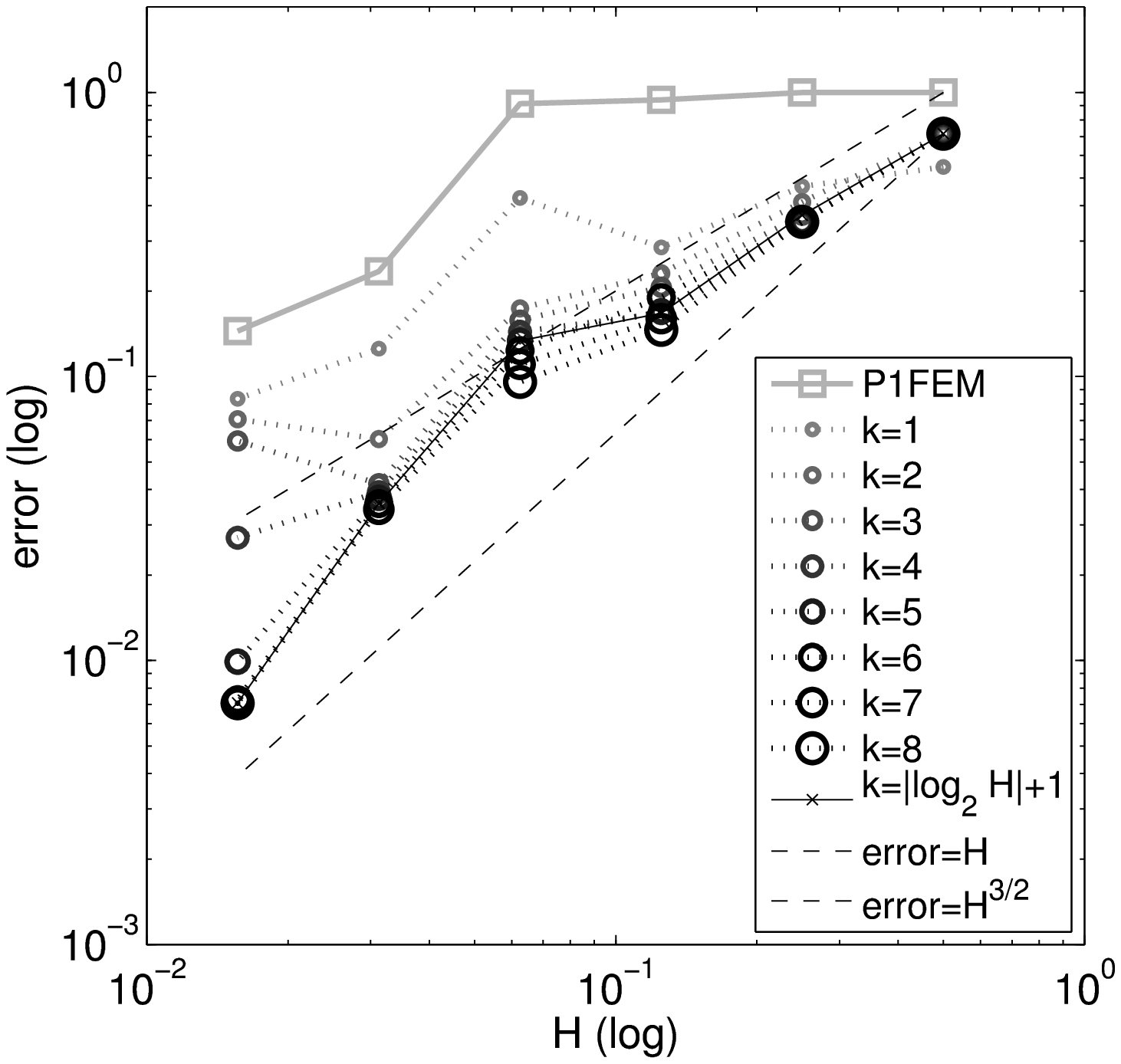}}
\subfigure[\label{fig:numexp2k_msfem1a}Results for $\QI^{\operatorname{proj},A}$.
]{\includegraphics[width=0.38\textwidth]{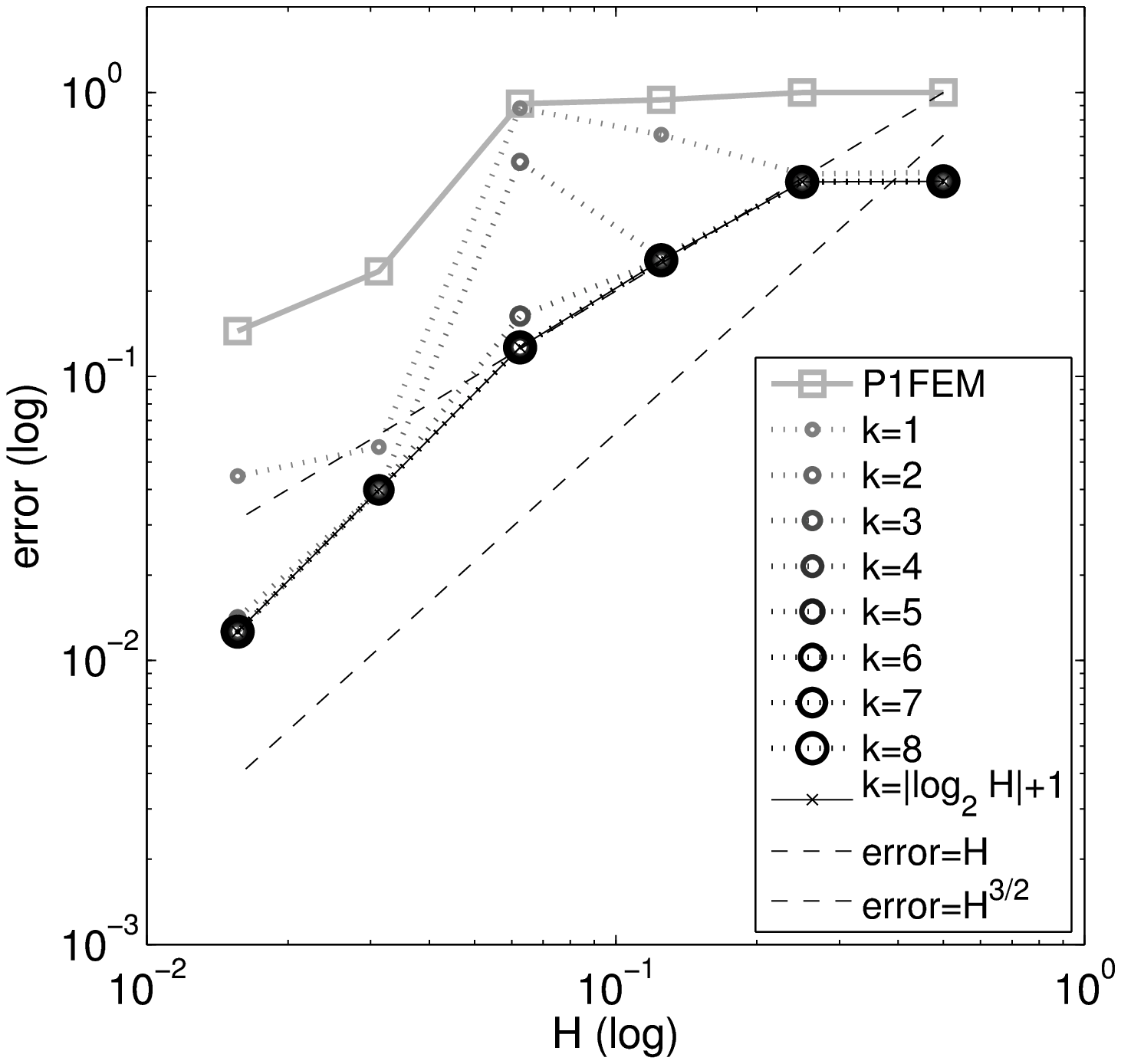}}\\
\end{center}
\caption{Numerical experiment of Section~\ref{ss:numexp2}: Results for high-contrast channels with contrast parameter $\beta$ depending on the coarse mesh size $H$. The reference mesh size $h=2^{-8}$ remains fixed. The localization parameter $k$ is varied between $1$ and $8$. \label{fig:numexp2k}}
\end{figure}

\subsection{Rough coefficient with multiscale features}\label{ss:numexp3}
\begin{figure}
\begin{center}
\includegraphics[width=0.45\textwidth]{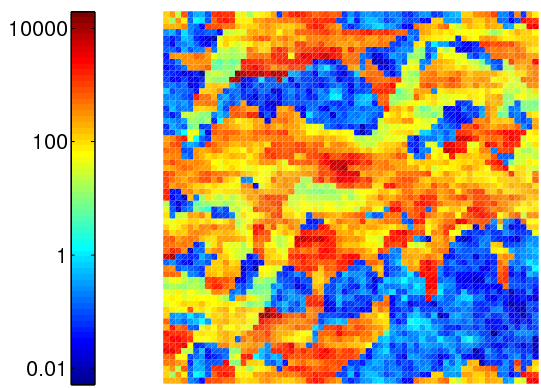}\end{center}
\caption{Scalar coefficient $A$ used in the numerical experiment of Section~\ref{ss:numexp3}.\label{fig:A2}}
\end{figure}
Let $\Omega:=(0,1)^2$ be the unit square. In this third experiment, the scalar coefficient $A$ (see Figure~\ref{fig:A2}) is piecewise constant with respect to a uniform Cartesian grid of width $2^{-6}$. Its values are taken from the data of the SPE10 benchmark, see \texttt{http://www.spe.org/web/csp/}. The coefficient is highly varying and strongly heterogeneous. The contrast for $A$ is large, $\beta/\alpha\approx 4\cdot 10^6$. This coefficient is certainly not quasi-monotone with regard to the coarse meshes considered here. The right-hand side term reads
\begin{equation*}
 g(x) = \begin{cases}
	  8,&x\in[0,\tfrac{1}{4}]\times[0,\tfrac{1}{4}]\cup[\tfrac{3}{4},1]\times[\tfrac{3}{4},1],\\
	  0,&\text{elsewhere.}
        \end{cases}
\end{equation*}
Consider uniform coarse meshes of size $\sqrt{2}H=2^{-1},2^{-2},\ldots,2^{-6}$ of $\Omega$ (cf. Figure~\ref{fig:meshes}). Note that none of these meshes resolves the rough coefficient $A$ appropriately. 
Again, the reference mesh $\tri_h$ has width $h=2^{-8}/\sqrt{2}$ and we compare the reference solution $u_h$ (with respect to the $P1$ conforming finite element approximation on the reference mesh $\tri_h$) with coarse scale approximations depending on the coarse mesh size $H$, several interpolation operators and the localization parameter $k$. Figure~\ref{fig:numexp3} depicts the results. 
\begin{figure}[tb]
\begin{center}
\subfigure[\label{fig:numexp3_msfem}Results for $\QI$.
]{\includegraphics[width=0.38\textwidth]{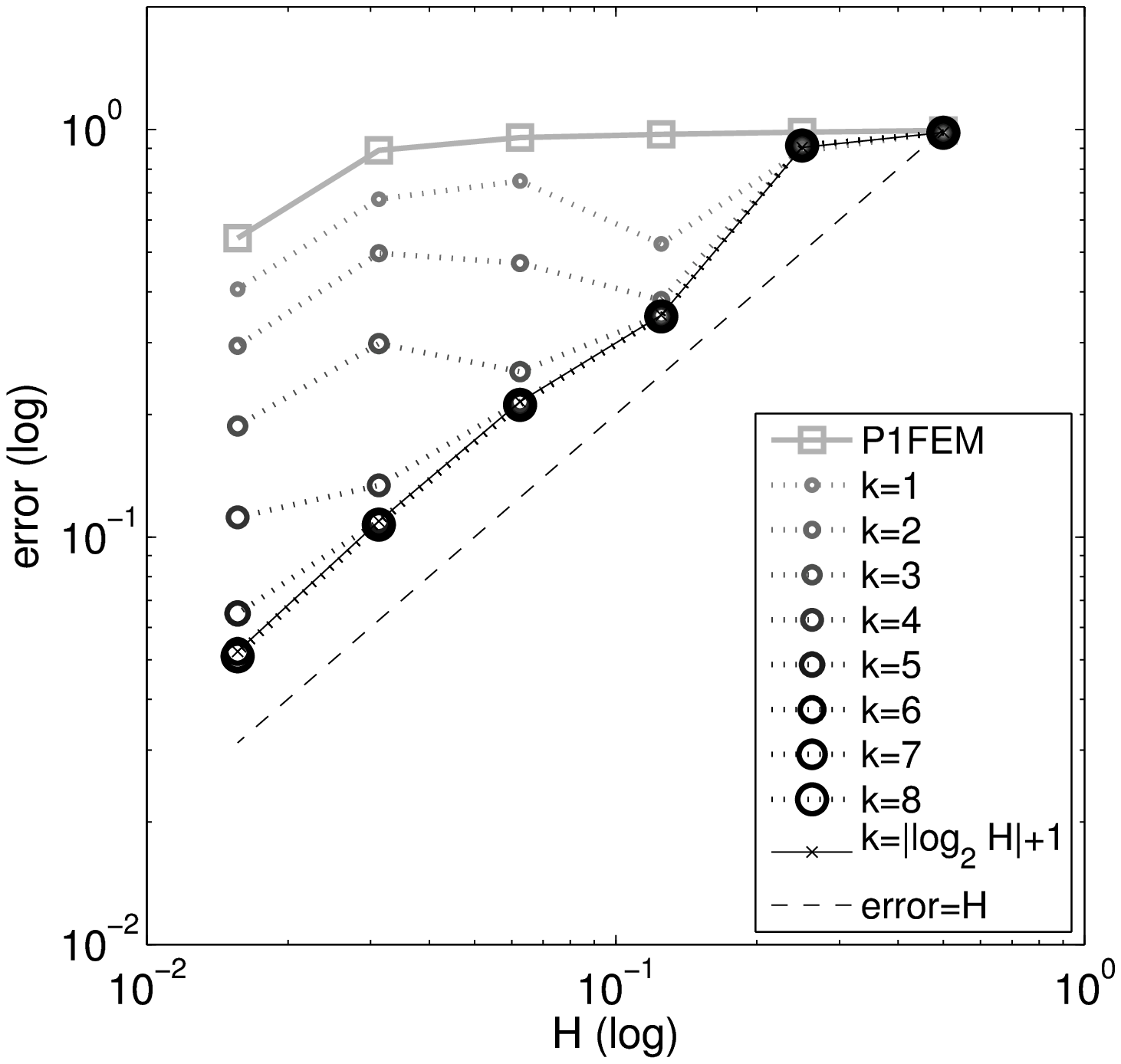}}
\subfigure[\label{fig:numexp3_msfema}Results for $\QI^A$.
]{\includegraphics[width=0.38\textwidth]{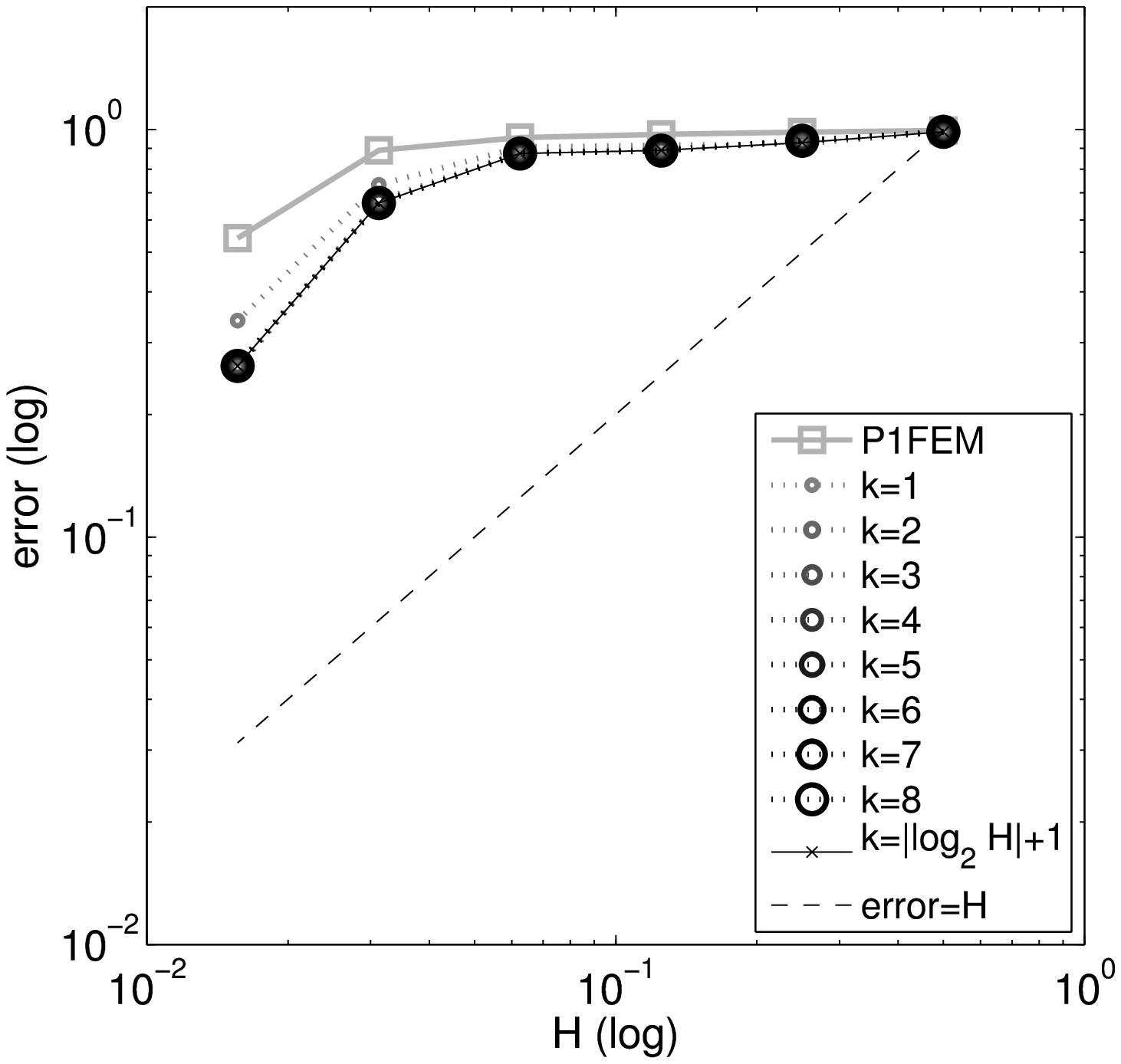}}\\
\subfigure[\label{fig:numexp3_msfem1}Results for $\QI^{\operatorname{proj}}$.
]{\includegraphics[width=0.38\textwidth]{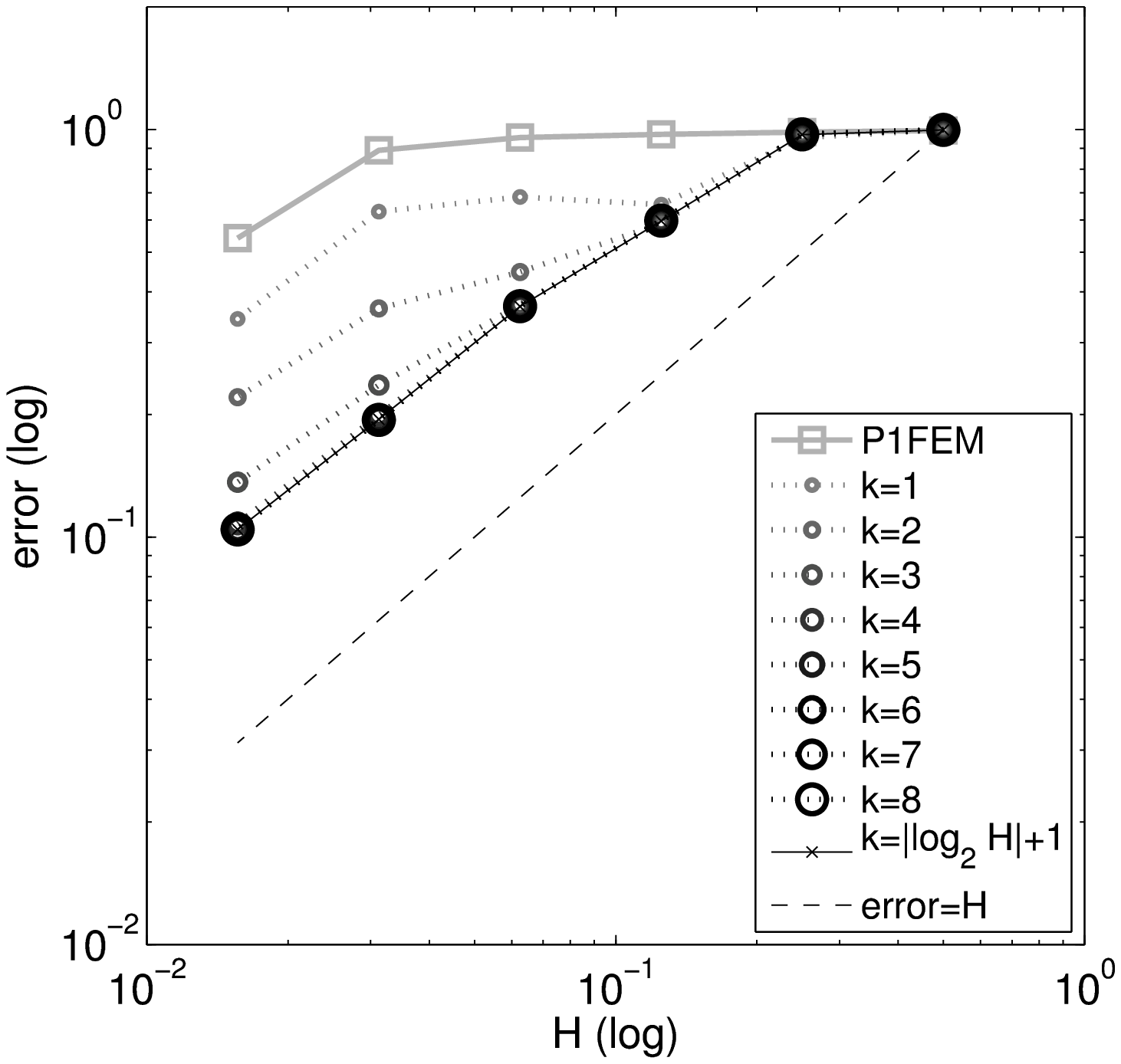}}
\subfigure[\label{fig:numexp3_msfem1a}Results for $\QI^{\operatorname{proj},A}$.
]{\includegraphics[width=0.38\textwidth]{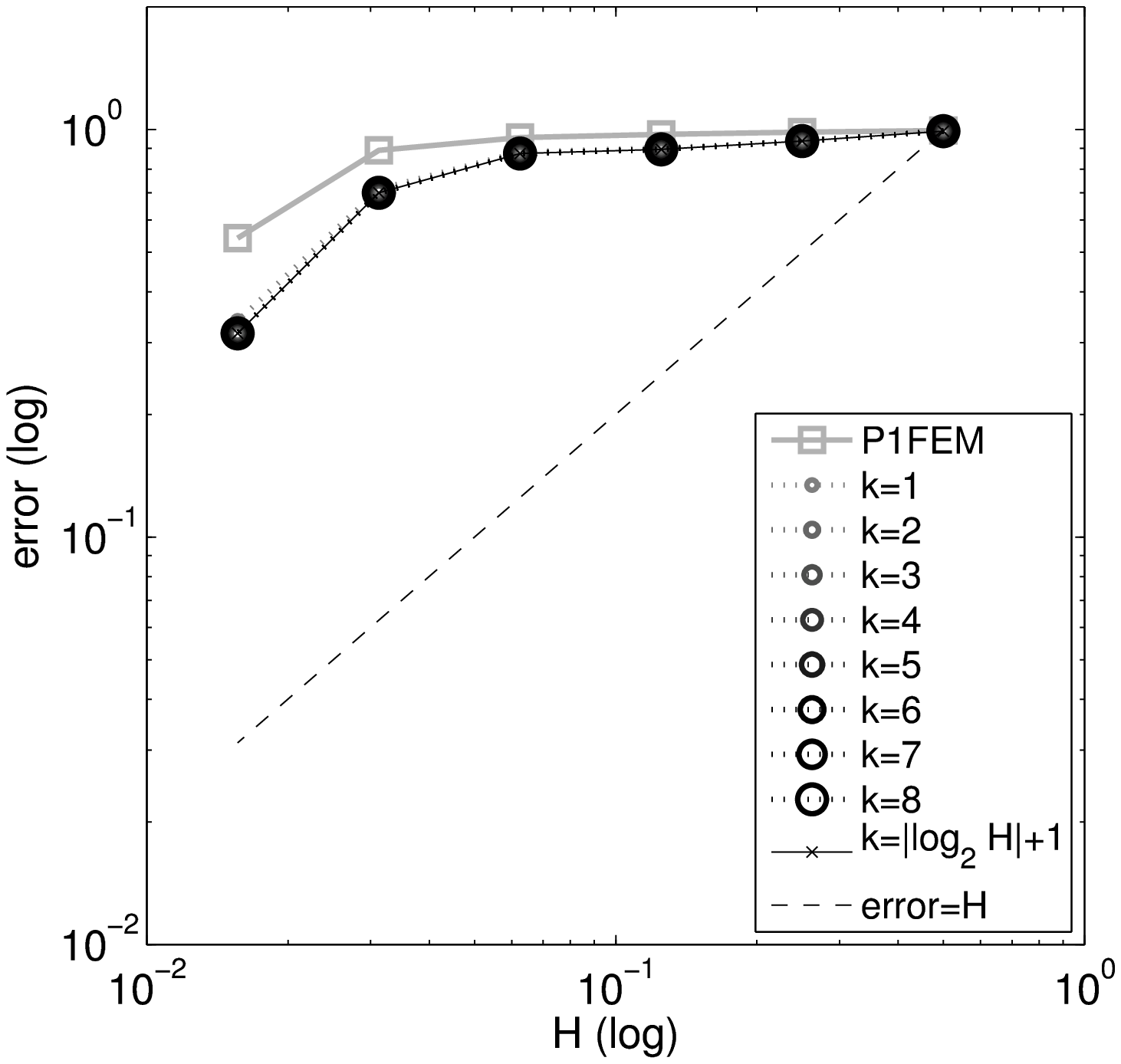}}\\
\end{center}
\caption{Numerical experiment of Section~\ref{ss:numexp3}: results for SPE10 data depending $H$. The reference mesh size $h=2^{-8}$ remains fixed. The localization parameter $k$ is varied between $1$ and $9$. \label{fig:numexp3}}
\end{figure}
This time, the methods based on $A$-independent interpolation perform significantly better that the methods with $A$-weighted interpolation. This superiority could be related to the approximability properties of the global bases. Note that, for non-quasi-monotone coefficients, the constant in Lemma~\ref{l:ideal} may depend on the contrast whereas the accuracy of the global method based on $\QI$ is independent of $\beta$ (cf. equation \eqref{e:estglobal}). Why this nice property of the $\QI$-based method is also observed after localization, however, remains completely open. 

To sum up, it can be said that the numerical experiments clearly showed the potential of the general methodology for high-contrast problems. They also showed that the decay of the correctors may be accelerated significantly by using $A$-dependent interpolation operators for the underlying split of coarse and fine scales in some cases. This is also supported by our theoretical results. However, the theory remains pessimistic in some cases and does not yet provide general advice regarding the choice of the interpolation operator along with an optimal choice of the localization parameter. \vspace{2ex}

\noindent
{\bf Acknowledgement.} We thank Clemens Pechstein for suggesting the alternative, projective  quasi-interpolation operator and providing us with the basic ideas for its analysis.

\end{document}